\theoremstyle{plain}
\newtheorem{thm}{Theorem}[section]
\newtheorem{lem}[thm]{Lemma}
\newtheorem{prop}[thm]{Proposition}
\newtheorem{cor}[thm]{Corollary}
\newtheorem*{thm*}{Theorem}
\newtheorem*{prop*}{Proposition}
\newtheorem*{cor*}{Corollary}
\newtheorem{thmintro}{Theorem}
\newtheorem{corintro}[thmintro]{Corollary}
\newtheorem{propintro}[thmintro]{Proposition}
\theoremstyle{definition}
\newtheorem{defn}[thm]{Definition}
\newtheorem{ex}[thm]{Example}
\newtheorem{rmk}[thm]{Remark}
\newtheorem*{rmk*}{Remark}
\newtheorem*{quest*}{Question}
\newtheorem*{defn*}{Definition}
\newcommand{\acts}{\curvearrowright}
\newcommand{\ra}{\rightarrow}
\newcommand{\Ra}{\Rightarrow}
\newcommand{\wt}{\widetilde}
\newcommand{\wh}{\widehat}
\newcommand{\x}{\times}
\renewcommand{\o}{\circ}
\newcommand{\id}{\mathrm{id}}
\newcommand{\mbb}{\mathbb}
\newcommand{\mc}{\mathcal}
\newcommand{\mf}{\mathfrak}
\newcommand{\mscr}{\mathscr}
\newcommand{\Z}{\mathbb{Z}}
\newcommand{\s}{\sigma}
\newcommand{\eps}{\epsilon}
\renewcommand{\L}{\Lambda}
\newcommand{\g}{\gamma}
\newcommand{\G}{\Gamma}
\newcommand{\E}{\mbb{E}}
\newcommand{\CAT}{{\rm CAT(0)}}
\DeclareMathOperator{\Fix}{Fix}
\DeclareMathOperator{\lk}{lk}
\DeclareMathOperator{\St}{st}
\DeclareMathOperator{\Isom}{Isom}
\DeclareMathOperator{\Aut}{Aut}
\DeclareMathOperator{\Out}{Out}
\newcommand{\sq}{\subseteq}
\renewcommand{\S}{\mathbb{S}}
\newcommand{\ora}{\overrightarrow}
\newcommand{\ola}{\overleftarrow}
\newcommand{\blue}[1]{\textcolor{blue}{#1}}
\newcommand{\red}[1]{\textcolor{red}{#1}}
\begin{document}

\title{Divisible cube complexes \\ and finite-order automorphisms of RAAGs} 
\author[E.\,Fioravanti]{Elia Fioravanti}\address{Institute of Algebra and Geometry, Karlsruhe Institute of Technology}\email{elia.fioravanti@kit.edu} 
\thanks{The author is supported by Emmy Noether grant 515507199 of the Deutsche Forschungsgemeinschaft (DFG)}
\subjclass{20F65 (20E36, 20F28, 20F36, 20F67, 57M07)}

\begin{abstract}
We give a geometric characterisation of those groups that arise as fixed subgroups of finite-order untwisted automorphisms of right-angled Artin groups (RAAGs). They are precisely the fundamental groups of a class of compact special cube complexes that we term ``divisible''. 

The main corollary is that surface groups arise as fixed subgroups of finite-order automorphisms of RAAGs, as do all commutator subgroups of right-angled Coxeter groups. These appear to be the first examples of such fixed subgroups that are not themselves isomorphic to RAAGs.

Using a variation of canonical completions, we also observe that every special group arises as the fixed subgroup of an automorphism of a finite-index subgroup of a RAAG.
\end{abstract}

\maketitle


\section{Introduction}

Given an automorphism $\varphi$ of a group $G$, it is natural to wonder about the properties of its subgroup of fixed points $\Fix(\varphi)\leq G$. A complete picture of the structure of such fixed subgroups is available only when $G$ satisfies strong constraints, and it is not unusual for fixed subgroups to behave much more wildly than the ambient group $G$.

Fixed subgroups have been extensively studied when $G$ is either a free group or the fundamental group of a closed surface. In these cases, it can be shown that $\Fix(\varphi)$ is always a free group (provided that $\varphi\neq\id$) and that its ``complexity'' never exceeds that of $G$; as a measure of complexity, one can take the modulus of the Euler characteristic. When $G$ is a closed surface group, this fact can be quickly deduced from the existence of Nielsen--Thurston decompositions for surface homeomorphisms \cite{Jaco-Shalen}. When $G$ is free, this proved to be a more complex problem known as the Scott Conjecture, which was the focus of a large body of work in the 80s \cite{Ger83,Goldstein-Turner,Ger87,Cohen-Lustig} before being finally solved by Bestvina and Handel in \cite{Bestvina-Handel}. 

The special case when an automorphism $\varphi\in\Aut(F_n)$ has finite-order projection $[\varphi]\in\Out(F_n)$ is considerably simpler to treat. In this situation, $\Fix(\varphi)$ is always a free factor of $F_n$ \cite{Dyer-Scott,Culler}, a property that is rarely satisfied for general automorphisms of free groups.

\medskip
Part of the reason why fixed subgroups are so restricted when $G$ is a free or surface group is that such low-dimensional groups do not have very interesting subgroups: every infinite-index subgroup of $G$ is free. This leads to a natural desire to study fixed subgroups in greater generality.

Right-angled Artin groups (RAAGs) are an ideal candidate for this. On the one hand, their subgroups are known to be extremely rich, including e.g.\ hyperbolic $3$--manifold groups \cite{Bergeron-Wise,Kahn-Markovic}, many non-geometric $3$--manifold groups \cite{Przytycki-Wise}, random groups \cite{Ollivier-Wise} and finitely presented small cancellation groups \cite{Wise04}. 

On the other hand, automorphisms of RAAGs are also plentiful \cite{Laurence,Servatius} and they have been the object of a burgeoning theory built along the blueprint of the classical theory of $\Out(F_n)$. For instance, analogues of Outer Space are available \cite{CCV,CSV,BCV}, as well as a peak reduction algorithm \cite{Day} and a good understanding of virtual cohomological dimension \cite{CV09,Day-Wade,Day-Wade-Sale}.

General automorphisms of RAAGs can have rather wild fixed subgroups: all Bestvina--Brady groups \cite{Bestvina-Brady} can occur\footnote{More precisely, each RAAG of the form $A_{\G}\x\Z$ has an automorphism with fixed subgroup $BB_{\G}\x\Z$; see \cite[Example~4.13]{Fio10a} for details.} and many of them are not even finitely generated. At the same time, the large class of automorphisms of RAAGs known as \emph{untwisted automorphisms} does have well-behaved fixed subgroups \cite[Theorem~C]{Fio10a}, which turn out to always be special groups in the Haglund--Wise sense \cite{HW08}.

Our main result (Theorem~\ref{thmintro:divisible_vs_fix}) is that --- in sharp contrast to the situation for automorphisms of free groups --- there are not many more restrictions on the isomorphism types of fixed subgroups of untwisted automorphisms of RAAGs, not even when the latter are assumed to have finite order. Indeed, a group arises as a fixed subgroup of such an automorphism exactly when it is the fundamental group of a ``divisible'' compact special cube complex.

As a first approximation, the reader can think of ``divisible'' cube complexes as those special cube complexes $Q$ that satisfy the following additional requirements:
\begin{enumerate}
\item any two distinct vertices of $Q$ can be separated by a union of pairwise-disjoint hyperplanes;
\item any hyperplane half-carrier in $Q$ can be separated from each vertex outside it by a union of pairwise-disjoint hyperplanes of $Q$.
\end{enumerate}
What we have just defined is actually the class of ``strongly divisible'' cube complexes (Definition~\ref{defn:strongly_divisible}), which would suffice for the first two examples in Corollary~\ref{corintro:examples} below, but importantly not for the equivalence in Theorem~\ref{thmintro:divisible_vs_fix}. The actual definition of a ``divisible'' cube complex is a little weaker and more technical, so we delay it until Section~\ref{sect:meat} (see Definition~\ref{defn:divisible}).

\begin{thmintro}\label{thmintro:divisible_vs_fix}
The following properties are equivalent for a group $H$:
\begin{enumerate}
\setlength\itemsep{.25em}
\item $H$ is the fundamental group of a divisible, compact, special cube complex;
\item there exist a right-angled Artin group $A_{\G}$ and a finite-order, pure, untwisted outer automorphism $[\varphi]\in\Out(A_{\G})$ such that $H\cong\Fix(\varphi)$, for some representative $\varphi\in\Aut(A_{\G})$.
\end{enumerate}
\end{thmintro}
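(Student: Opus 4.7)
The plan is to establish the two implications separately. Throughout, the guiding principle is that pure untwisted automorphisms of RAAGs admit cubical realisations as combinatorial symmetries of non-positively curved cube complexes closely related to the Salvetti complex, so that fixed subgroups appear as fundamental groups of quotients of convex fixed-point subcomplexes.

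For the direction $(2)\Rightarrow(1)$, I would begin with a finite-order pure untwisted $[\varphi]\in\Out(A_{\G})$ and realise the cyclic group $\langle[\varphi]\rangle$ as a group of combinatorial isometries of a $\CAT$ cube complex $X$ on which $A_{\G}$ acts geometrically --- a suitable ``blow-up'' of the Salvetti universal cover, in the spirit of \cite{Fio10a}. Since $\langle[\varphi]\rangle$ is finite, it has non-empty convex fixed-point subcomplex $X^{\varphi}\subseteq X$ by the standard fixed-point theorem on complete $\CAT$ spaces. For a good choice of representative $\varphi\in\Aut(A_{\G})$, the subgroup $\Fix(\varphi)$ is precisely the stabiliser of $X^{\varphi}$ in $A_{\G}$ and acts geometrically on $X^{\varphi}$. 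The quotient $Q:=X^{\varphi}/\Fix(\varphi)$ is then a compact special cube complex with $\pi_1(Q)\cong\Fix(\varphi)=H$. Divisibility of $Q$ follows from the hyperplane-separation properties of the ambient $X$: any two vertices, and any vertex-half-carrier pair in $X$, are separated by pairwise-disjoint families of hyperplanes (a Salvetti-like feature), and such families restrict to $X^{\varphi}$ and descend to $Q$.

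For the direction $(1)\Rightarrow(2)$, which is the substantial one, I would construct a regular cover $\pi\colon\wt Q\to Q$ whose total space $\wt Q$ is isomorphic to the Salvetti complex of some RAAG $A_{\G}$, with finite abelian deck group $D$. The divisibility conditions are used to organise the hyperplanes of $Q$ into finitely many ``parallelism families'' and to produce a cover in which each such family unfolds into a genuinely disjoint family of parallel hyperplanes of controlled multiplicity. The recognition of $\wt Q$ as an actual Salvetti complex --- equivalently, that its vertex links are flag complexes on a common finite vertex set --- is the key technical point. Once this is in place, choosing a cyclic subgroup $\langle\sigma\rangle\leq D$ whose quotient recovers $Q$ induces a finite-order outer automorphism $[\varphi]$ of $A_{\G}=\pi_1(\wt Q)$ with $\Fix(\varphi)\cong H$ by construction. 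Purity and untwistedness then follow from the cubical nature of the deck action: it permutes parallelism classes of hyperplanes, yielding compositions of graph automorphisms, inversions, and partial conjugations, with no transvections.

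The principal obstacle is precisely the recognition of $\wt Q$ as a genuine Salvetti complex in the $(1)\Rightarrow(2)$ direction. This is where the technical definition of ``divisible'' (rather than the simpler ``strongly divisible'' that suffices for Corollary~\ref{corintro:examples}) should come into play: the separation axioms must be tuned delicately enough to guarantee that unfolded parallelism families can be reassembled into Salvetti links on a common finite vertex set, even when half-carriers carry non-trivial internal symmetries. Verifying divisibility in the $(2)\Rightarrow(1)$ direction, and purity together with untwistedness in the $(1)\Rightarrow(2)$ direction, should be comparatively routine consequences of standard cubical-geometry machinery once the central constructions are in hand.
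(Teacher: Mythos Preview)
Your $(1)\Rightarrow(2)$ strategy has a fundamental orientation error. If $\pi\colon\wt Q\to Q$ is a finite cover, then $\pi_1(\wt Q)$ is a finite-index \emph{subgroup} of $H=\pi_1(Q)$, not a supergroup. So even if $\wt Q$ were a Salvetti complex for some $A_{\G}$, you would have $A_{\G}\leq H$, and the fixed subgroup of any representative $\varphi$ of the deck-induced outer class would sit inside $A_{\G}$, hence be a \emph{proper} subgroup of $H$ --- never $H$ itself. Worse, a Salvetti complex has a single vertex, while a degree--$d$ cover of $Q$ has $d\cdot\#Q^{(0)}$ vertices; thus $\wt Q$ can be a Salvetti complex only when $Q$ already is one and the cover is trivial. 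Finally, deck transformations act \emph{freely}, so by Lemma~\ref{lem:pi1_vs_fix}(1) the induced outer class admits no finite-order lift to $\Aut(A_{\G})$ at all; the fixed subgroups of its (infinite-order) representatives are centralisers in $A_{\G}$, not copies of $H$.

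The paper goes in the opposite direction: one \emph{embeds} $Q$ as a locally convex subcomplex of a larger compact special cube complex $\wh M$ (the ``extended host'', modelled on the Salvetti blowups of \cite{CSV}) whose fundamental group is a RAAG $A_{\L}$, and which carries a finite-order cubical automorphism $\Phi$ having $Q$ as an entire connected component of $\Fix(\Phi)$. Lemma~\ref{lem:pi1_vs_fix}(2) then gives $\Fix(\varphi)\cong\pi_1(Q)=H$ for the induced $\varphi\in\Aut(A_{\L})$. Divisibility is precisely what makes this embedding possible: the partitions in the dividing pattern supply extra hyperplanes (labelled by $\wh\G-\G$) that separate the vertices of $Q$ inside the host, while condition~(4) of Definition~\ref{defn:dividing_pattern} ensures $Q$ is a full component of $\Fix(\Phi)$ rather than merely a subcomplex of one.

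Your $(2)\Rightarrow(1)$ sketch is closer in spirit, but the decisive input is not the bare $\CAT$ fixed-point theorem on the universal cover; it is the much deeper result of Bregman--Charney--Vogtmann \cite{BCV2} that a finite subgroup of the pure untwisted outer automorphism group fixes a point of untwisted Outer Space, i.e.\ is realised by cubical automorphisms of a \emph{compact} Salvetti blowup $\S_{\G}^{\mathbf{\Pi}}$. Divisibility of the fixed-set components is then read off from the explicit Whitehead-partition structure of $\mathbf{\Pi}$, not from generic separation properties of an abstract cubulation.
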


The two arrows of the above equivalence are proved in Theorem~\ref{thm:divisible->fix} and Proposition~\ref{prop:fix->divisible}, respectively. Our proof was inspired by the construction of Salvetti blowups from \cite{CSV} and it also relies in an essential way on the fixed point theorem for (untwisted) Outer Space from \cite{BCV2}. 

Roughly, the idea for the implication $(1)\Ra(2)$ is that a divisible cube complex $Q$ can always be embedded into a ``host'' cube complex that strongly resembles one of the Salvetti blowups from \cite{CSV} (despite not quite being one). The fundamental group of the host is a RAAG and the host admits a natural cubical automorphism having the image of $Q$ as a connected component of its fixed set. Conversely, the implication $(2)\Ra(1)$ is proved by realising finite-order outer automorphisms as isometries of a Salvetti blowup and exploiting the particular geometry of the latter.

Despite the slightly convoluted definition, divisible cube complexes are plentiful in nature, which leads to the following remarkable consequence (see Examples~\ref{ex:strongly_divisible} and~\ref{ex:braid_divisible}).

\begin{corintro}\label{corintro:examples}
The following groups arise as fixed subgroups of finite-order, pure, untwisted automorphisms of right-angled Artin groups:
\begin{enumerate}
\item[(a)] the fundamental group of any closed orientable surface;
\item[(b)] the commutator subgroup of any right-angled Coxeter group;
\item[(c)] any graph braid group.
\end{enumerate}
\end{corintro}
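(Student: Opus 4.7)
The plan is to apply Theorem~\ref{thmintro:divisible_vs_fix} and reduce each case to exhibiting the given group as the fundamental group of a compact special cube complex $Q$ satisfying the divisibility axioms; for (a) and (b) I will in fact verify the stronger separation conditions~(1) and~(2) from the introduction.

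For (b), the right-angled Coxeter group $W_\G$ acts properly and cocompactly on its Davis--Moussong $\CAT$ cube complex $\Sigma_\G$ by reflections across hyperplanes. The commutator subgroup $[W_\G,W_\G]$ has finite index $2^{|V\G|}$, being the kernel of the abelianisation $W_\G\to(\Z/2)^{V\G}$, and the fact that each nontrivial element of a spherical (finite) parabolic subgroup has nontrivial image in this abelianisation means that $[W_\G,W_\G]$ acts freely on $\Sigma_\G$. The quotient $Q=\Sigma_\G/[W_\G,W_\G]$ is then a compact cube complex with $\pi_1(Q)\cong[W_\G,W_\G]$, and specialness follows since $[W_\G,W_\G]$ lies in the orientation-preserving subgroup and avoids the standard hyperplane pathologies. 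Conditions~(1) and~(2) descend from the fact that distinct vertices of $\Sigma_\G$ are separated by disjoint reflection hyperplanes, each $W_\G$-orbit mapping to a single embedded hyperplane of $Q$. Case~(a) is handled along the same lines: a closed orientable surface $\Sigma_g$ of genus $g\geq 2$ can be realised as the quotient of the Davis complex of a suitable hyperbolic right-angled Coxeter group by a torsion-free finite-index surface subgroup, producing a compact special square complex whose hyperplanes are pairwise-disjoint essential simple closed curves; the genus-zero case is trivial and the torus case is covered by $\Z^2\cong\Fix(\id_{\Z^2})$.

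For (c), the graph braid group $B_n\G$ is, by Abrams, the fundamental group of the discretised unordered configuration space $UD_n(\G')$ of a sufficiently subdivided graph $\G'\simeq\G$, and this $UD_n(\G')$ is a compact special cube complex whose hyperplanes correspond to single particles traversing single edges of $\G'$. After a further subdivision of $\G'$ one obtains enough pairwise-disjoint parallel copies of each such hyperplane to separate any two configurations, and to separate each half-carrier from any exterior configuration, which establishes divisibility. The principal obstacle anticipated in each of (a)--(c) is the verification of the half-carrier separation condition, which requires the cubulation (or a subdivision thereof) to possess sufficiently many redundant pairwise-disjoint hyperplanes; once this is in place, Theorem~\ref{thmintro:divisible_vs_fix} immediately furnishes the required right-angled Artin group $A_\G$ and finite-order pure untwisted automorphism $\varphi$.
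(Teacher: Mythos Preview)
Your overall strategy---reduce to Theorem~\ref{thmintro:divisible_vs_fix} by exhibiting each group as $\pi_1$ of a divisible compact special cube complex---is exactly the paper's. Case~(b) is also essentially the paper's argument: the quotient of the Davis complex by $[W_\G,W_\G]$ is precisely the complex $Q_\G$ of Example~\ref{ex:strongly_divisible}(2), and your separation claims are what that example verifies.

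Case~(a), however, has a real gap. Realising $\Sigma_g$ as the quotient of a Davis complex by a torsion-free finite-index subgroup does not give you divisibility for free: the paper explicitly notes (Example~\ref{ex:non-examples}) that divisibility is \emph{not} preserved under finite covers, so even though the commutator-subgroup quotient $Q_\G$ is strongly divisible, a further finite cover realising a different genus need not be. Moreover, the commutator subgroups of cycle RACGs only realise a sparse sequence of genera (e.g.\ the pentagon gives genus~$5$), so you genuinely need to pass to other subgroups and then justify divisibility there, which you have not done. The paper instead produces, for every $g\geq 2$, an explicit square complex $Q_g$ dual to a concrete curve system on $\Sigma_g$ (a pants decomposition together with curves cutting each pant into two hexagons) and verifies strong divisibility directly from that picture.

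Case~(c) has a similar issue. Your claim that ``a further subdivision of $\G'$'' yields enough pairwise-disjoint parallel hyperplanes to establish \emph{strong} divisibility of $UD_n(\G')$ is asserted without proof, and it is not obvious: subdividing $\G'$ changes the configuration space in a nontrivial way, and you would need to control half-carriers of every hyperplane simultaneously. The paper avoids this entirely by proving only the weaker divisibility of $UC_n(\G)$, using a non-injective special colouring $\kappa\colon\mscr{W}(UC_n(\G))\to\mc{E}(\G)$ (sending each hyperplane to its subordinate edge of $\G$) together with partitions $\mc{H}_a$ indexed by vertices $a\in\G$; see Example~\ref{ex:braid_divisible}. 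This is precisely the situation for which the general Definition~\ref{defn:divisible} was designed, and it sidesteps the need for any subdivision or strong-divisibility argument.
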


To the best of my knowledge, all previously known fixed subgroups of finite-order (or untwisted) automorphisms of RAAGs happened to be themselves isomorphic to RAAGs. 

We mention that among commutator subgroups of right-angled Coxeter groups one finds fundamental groups of finite-volume hyperbolic manifolds of all dimensions $\leq 8$ \cite{Loebell,Everitt,PV,ERT}, 
as well as Gromov-hyperbolic groups of arbitrarily high dimension \cite{JS03,Osajda}.

Graph braid groups were first considered in Abrams' thesis \cite{Abrams} and it was later observed by Crisp and Wiest that they are fundamental groups of compact special cube complexes \cite{CW04}. The definition of graph braid groups is simple: for any connected, finite, simplicial graph $\G$ and any integer $0\leq n\leq \#\G^{(0)}$, one considers the fundamental group $B_n(\G)$ of the configuration space of $n$ distinct and unmarked points in $\G$. We refer the reader to Genevois' work \cite{Gen-IJAC} for an excellent introduction to the subject. 

\begin{figure}
\begin{tikzpicture}[baseline=(current bounding box.center)]
\draw[fill] (0,0) circle [radius=0.07cm];
\draw[fill] (0,2) circle [radius=0.07cm];
\draw[fill] (2,0) circle [radius=0.07cm];
\draw[fill] (2,2) circle [radius=0.07cm];
\draw[fill] (1.2,2.9) circle [radius=0.07cm];
\draw[fill] (3.2,2.9) circle [radius=0.07cm];
\draw[fill] (1.2,0.9) circle [radius=0.07cm];
\draw[fill] (3.2,0.9) circle [radius=0.07cm];
\draw[thick] (0,0) -- (0,2);
\draw[thick] (0,2) -- (2,2);
\draw[thick] (2,2) -- (2,0);
\draw[thick] (2,0) -- (0,0);
\draw[thick] (0,2) -- (1.2,2.9);
\draw[thick] (1.2,2.9) -- (3.2,2.9);
\draw[thick] (3.2,2.9) -- (2,2);
\draw[thick] (3.2,2.9) -- (3.2,0.9);
\draw[thick] (3.2,0.9) -- (2,0);
\draw[thick,dashed] (0,0) -- (1.2,0.9);
\draw[thick,dashed] (1.2,0.9) -- (1.2,2.9);
\draw[thick,dashed] (1.2,0.9) -- (3.2,0.9);
\tikzset{
    partial ellipse/.style args={#1:#2:#3}{
        insert path={+ (#1:#3) arc (#1:#2:#3)}
    }
}
\draw[thick,rotate around={43:(1.6,1.45)}] (1.6,1.45) [partial ellipse=180:360:2.16cm and 2.1cm];
\end{tikzpicture}
\caption{A graph $\L$ such that the RAAG $A_{\L}$ admits an involution $\varphi$ with $\Fix(\varphi)$ isomorphic to the genus--$2$ surface group.}
\label{fig:intro}
\end{figure}

Explicit automorphisms having the groups in Corollary~\ref{corintro:examples} as fixed subgroups are given in Example~\ref{ex:explicit}. In fact, each of these groups is the fixed subgroup of an involution of a right-angled Artin group $A_{\L}$, where the graph $\L$ admits the following pleasant characterisations.
\begin{enumerate}
\setlength\itemsep{.25em}
\item[(a)] For the fundamental group of the (say) genus--$2$ surface: the graph $\L$ is pictured in Figure~\ref{fig:intro}. 
\item[(b)] For the commutator subgroup of the right-angled Coxeter group $W_{\G}$: the graph $\L$ has two vertices $v_1,v_2$ for every vertex $v\in\G$; the vertices $\{v_1\mid v\in\G\}$ span a copy of $\G$ within $\L$, whereas the vertices $\{v_2\mid v\in\G\}$ span a whole clique, and $\L$ has additional edges $[v_1,w_2]$ whenever $v,w\in\G$ are distinct.
\item[(c)] For the graph braid group $B_n(\G)$: the graph $\L$ has a vertex $\overline e$ for every edge $e\sq\G$ and a vertex $\overline v$ for every vertex $v\in\G$; there are edges $[\overline e_1,\overline e_2]$ when $e_1\cap e_2=\emptyset$, edges $[\overline v_1,\overline v_2]$ whenever $v_1,v_2$ are distinct, and finally edges $[\overline e,\overline v]$ exactly when $v\not\in e$.

Note that $\L$ is independent of the value of $n$ and, in fact, there are involutions $\varphi_n$ of $A_{\L}$ for all $0\leq n\leq\#\G^{(0)}$ so that $\Fix(\varphi_n)\cong B_n(\G)$. Moreover, the involutions $\varphi_n\in\Aut(A_{\L})$ all descend to the same element of $\Out(A_{\L})$, which is independent\footnote{We emphasise that this does \emph{not} yield any isomorphisms between $B_n(\G)$ and $B_m(\G)$ for $m\neq n$; see Lemma~\ref{lem:pi1_vs_fix} for a general discussion on fixed subgroups of automorphisms in the same outer class.} of the value of $n$. 
\end{enumerate}

\smallskip
I do not know how strong of a requirement divisibility of a cube complex actually is. Not every special cube complex is divisible and divisibility is not always preserved when passing to finite covers (see Example~\ref{ex:non-examples}). At the same time, things might be different when it comes to fundamental groups and it is theoretically possible that every special group is the fundamental group of a divisible cube complex, though there is not much evidence for this at the moment. 

I particularly want to draw attention to the following questions, for which I do not have answers.

\begin{quest*}
\begin{enumerate}
\item[]
\item Is there a group $H$ that is the fundamental group of a compact special cube complex, but not the fundamental group of a divisible one? 
\item Does every compact special cube complex admit a divisible finite cover? 
\end{enumerate}
\end{quest*}

Finally, it is interesting to note that, dropping the divisibility requirement, every special group $H$ can be realised as the fixed subgroup of an automorphism of a \emph{finite-index} subgroup of a RAAG.

In addition, such automorphisms can be taken to coarsely preserve the coarse median structure of the ambient group. For automorphisms of RAAGs, this property is equivalent to the automorphism being untwisted \cite{Fio10a}. Thus, being ``coarse-median preserving'' can be viewed as an extension of untwistedness to automorphisms of finite-index subgroups of RAAGs (and, more generally, to automorphisms of general cocompactly cubulated groups).

\begin{propintro}\label{propintro:virtual_aut}
Let $H$ be the fundamental group of a compact special cube complex. There exist a finite-index subgroup $R$ of a right-angled Artin group and a finite-order, coarse-median preserving automorphism $\varphi\in\Aut(R)$ such that $\Fix(\varphi)\cong H$.
\end{propintro}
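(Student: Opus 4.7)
The plan is to mirror the construction used in the proof of the implication $(1)\Rightarrow(2)$ of Theorem~\ref{thmintro:divisible_vs_fix}, replacing the divisibility hypothesis with an application of Haglund--Wise canonical completions. Since $Q$ is compact and special, there exists a combinatorial local isometry $\iota\colon Q\to S_{\G}$ into the Salvetti complex of some finite simplicial graph $\G$. The canonical completion applied to $\iota$ yields a finite cover $C\to S_{\G}$ together with an embedding $Q\hookrightarrow C$ as a locally convex subcomplex; in particular, $\pi_1(C)$ is a finite-index subgroup of $A_{\G}$ containing $H$.

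The heart of the argument is to build an ambient cube complex $X$ containing (a copy of) $Q$, together with a cubical involution $\tau\colon X\to X$ such that $Q$ is a connected component of $\Fix(\tau)$ and $\pi_1(X)$ is a finite-index subgroup of some RAAG $A_{\L}$. The idea is to run the \emph{host complex} construction from the proof of Theorem~\ref{thmintro:divisible_vs_fix} not directly inside $S_{\G}$, but inside the finite cover $C$. Heuristically, the role of divisibility in that proof is to guarantee that the host can be closed up inside a single Salvetti; when $Q$ is not divisible, the extra sheets of $C\to S_{\G}$ are used to absorb the obstructions, and the resulting $X$ becomes a finite cover of a Salvetti $S_{\L}$ rather than being $S_{\L}$ itself. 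This combinatorial maneuver is the ``variation of canonical completions'' promised in the abstract.

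Set $R:=\pi_1(X)$, which is finite-index in $A_{\L}$, and let $\varphi\in\Aut(R)$ be the finite-order automorphism induced by $\tau$. Since $Q$ is a component of $\Fix(\tau)$ and embeds $\pi_1$-injectively in $X$, the fixed subgroup $\Fix(\varphi)$ is isomorphic to $\pi_1(Q)\cong H$. Coarse-median preservation of $\varphi$ is essentially automatic: the involution $\tau$ lifts to a cubical isometry of the universal cover $\widetilde{X}=\widetilde{S_{\L}}$, which preserves the canonical median structure, and the coarse median on $R$ induced via its orbit map into $\widetilde{S_{\L}}$ is therefore $\varphi$-invariant up to bounded error. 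The principal technical obstacle is the execution of the host construction inside the cover $C$ and the verification that the resulting $X$ is finite over a Salvetti; this requires careful bookkeeping analogous to that in Theorem~\ref{thmintro:divisible_vs_fix}, but no essentially new ideas beyond canonical completions and the construction of Salvetti blowups.
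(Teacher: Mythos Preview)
Your proposal has a genuine gap. The host construction from Theorem~\ref{thmintro:divisible_vs_fix} is built on a dividing pattern: the partitions $\mc{H}_i$ determine the auxiliary $\CAT$ complex $\E$, the added edges, and --- crucially --- the verification in Proposition~\ref{prop:host_properties}(5) that $Q$ is an entire component of the fixed set. You write that ``the extra sheets of $C\to S_{\G}$ are used to absorb the obstructions'' to divisibility, but you give no mechanism for producing the partitions $\mc{H}_i$ inside $C$, nor any argument that the resulting $X$ covers a Salvetti, nor a reason why the involution would fix exactly $Q$. Without a dividing pattern the host construction simply does not start, and the canonical completion $C$ supplies no substitute for it.

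The paper's argument avoids the host construction entirely. Instead it modifies the canonical completion directly: rather than completing $Q$ to a cover of $\mbb{S}_{\G}$ (vertex links $\G[1]$), one \emph{triples} every completed edge so that all vertex links become $\G[3]$, yielding a finite cover $\wh Q$ of the Salvetti $\mbb{S}_{\G[3/2]}$. The automorphism $\Phi$ is then the obvious permutation of the three parallel copies of each edge --- swapping the two new copies when the original edge lay in $Q$, and $3$--cycling them otherwise --- so that $\Fix(\Phi)=Q$ on the nose. The point is that the ``extra room'' needed for an automorphism comes not from a blowup governed by hyperplane partitions, but from redundant parallel edges; this is what the paper means by a ``variation of canonical completions''. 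Your outline misses this idea and instead tries to transplant a construction that genuinely requires divisibility.
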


The proof of Proposition~\ref{propintro:virtual_aut} is based on a rather small variation of the classical construction, due to Haglund and Wise, of the canonical completion of a special cube complex \cite[Section~6]{HW08}. See Section~\ref{subsec:virt_RAAG} for details.

I emphasise that the converse to Proposition~\ref{propintro:virtual_aut} remains unknown. More generally, I do not know if fixed subgroups of coarse-median preserving automorphisms of special groups $G$ are themselves (virtually) special. This is true when $G$ is a right-angled Artin/Coxeter group by \cite[Theorem~C]{Fio10a}, but the argument there does not apply to automorphisms of finite-index subgroups. At the same time, fixed subgroups of coarse-median preserving automorphisms are always median-cocompact (see \cite[Lemma~2.35]{Fio10a} and \cite[Section~2.3]{FLS}), and it appears likely that median-cocompact subgroups of special groups will themselves be virtually special. The latter requires care, however, as passing to a median subalgebra causes inter-osculations in general.

\smallskip
{\bf Acknowledgements.}  I am grateful to Anthony Genevois for comments on Proposition~\ref{propintro:virtual_aut}, to Sam Shepherd for suggesting part of Example~\ref{ex:non-examples}, and to the anonymous referee for their many helpful suggestions.

\section{Preliminaries}

\subsection{Cube complexes}\label{subsec:cube_complexes}

The purpose of this subsection is to fix terminology and notation. We refer the reader to \cite{HW08} for basics on cube complexes sufficient for the rest of the paper.

Let $Q$ be a cube complex. We denote by $\Aut(Q)$ the group of \emph{cubical automorphisms} of $Q$, i.e.\ homeomorphisms of $Q$ preserving the cellular structure. When $Q$ is compact, $\Aut(Q)$ is finite.

We say that two hyperplanes $H,K\sq Q$ \emph{cross} (or equivalently, that they are \emph{transverse}) if we have $H\cap K\neq\emptyset$ and $H\neq K$. A hyperplane $H$ \emph{self-intersects} if it contains both mid-cubes of some square of $Q$. A hyperplane that does not self-intersect is \emph{embedded}.

Every hyperplane $H\sq Q$ inherits from $Q$ a natural structure of a cube complex, its cells being its intersections with the cells of $Q$. We will therefore speak of vertices of $H$ (i.e.\ intersections between $H$ and an edge of $Q$), edges of $H$ (intersections between $H$ and a square of $Q$) and so on. If $H_1,\dots,H_k\sq Q$ are hyperplanes and $C$ is a connected component of $H_1\cap\dots\cap H_k$, we similarly obtain a structure of a cube complex on $C$. If $Q$ is non-positively curved, so are its hyperplanes and intersections of hyperplanes.

The \emph{carrier} of a hyperplane $H\sq Q$ is the union $C(H)\sq Q$ of all closed cubes of $Q$ that intersect $H$. The \emph{interior} $C^{\o}(H)$ of the carrier is the union of all open cubes of $Q$ that intersect $H$. We say that an embedded hyperplane $H$ is \emph{2--sided} if $H$ disconnects $C^{\o}(H)$, necessarily into exactly two components. We say that $H$ is a \emph{carrier retract} if $C(H)$ is isomorphic to the product $H\x[0,1]$.

An \emph{oriented hyperplane} $\ora{H}$ is the data of a $2$--sided hyperplane $H\sq Q$ together with a choice of connected component of $C^{\o}(H)-H$. There are exactly two possible orientations on each $2$--sided hyperplane $H\sq Q$ and we will usually denote them by $\ora{H}$ and $\ola{H}$. We write $\mscr{W}(Q)$ for the set of all hyperplanes of $Q$ and $\mscr{O}(Q)$ for the set of oriented hyperplanes.

The \emph{positive carrier} of an oriented hyperplane $\ora{H}$ is the subcomplex $C(\ora{H})\sq C(H)$ formed by all closed cubes of $C(H)-C^{\o}(H)$ that are contained in the closure of the chosen component of $C^{\o}(H)-H$. We also call $C(\ola{H})$ the \emph{negative carrier} of the oriented hyperplane $\ora{H}$, and generally refer to the positive and negative carrier of $\ora{H}$ as the two \emph{half-carriers} of $H$. We always have 
\[C(H)=C^{\o}(H)\sqcup(C(\ora{H})\cup C(\ola{H})),\] 
where the union $C(\ora{H})\cup C(\ola{H})$ contains every vertex of $C(H)$. A hyperplane $H$ is a carrier retract if and only if it is $2$--sided and $C(\ora{H})$ and $C(\ola{H})$ are disjoint. In general, is possible for $C(\ora{H})$ and $C(\ola{H})$ to coincide, for instance if every edge of $Q$ dual to $H$ is a loop.

When $H$ is $2$--sided, every edge of $Q$ dual to $H$ has an endpoint in $C(\ora{H})$ and an endpoint in $C(\ola{H})$. This gives two surjective cubical maps $H\twoheadrightarrow C(\ora{H})$ and $H\twoheadrightarrow C(\ola{H})$.
In particular, we can define a new cube complex $Q^H$ by \emph{collapsing} the hyperplane $H$: this amounts to considering the cube complex $Q-C^{\o}(H)$ and gluing its two subcomplexes $C(\ora{H})$ and $C(\ola{H})$ to each other, identifying pairs of points that are images of the same point of $H$ under the above two surjections.

There is a natural quotient map $Q\twoheadrightarrow Q^H$ that squashes every cube of $Q$ intersecting $H$ onto its two (identified) codimension--$1$ faces in $C(\ora{H})$ and $C(\ola{H})$. When $H$ is a carrier retract, this map $Q\twoheadrightarrow Q^H$ is a homotopy equivalence

\subsection{Special cube complexes}\label{subsec:special}

Let $Q$ be a cube complex. A hyperplane $H\sq Q$ \emph{self-osculates} if it is dual to two edges $e,f$ that share a vertex $x$. When $H$ is $2$--sided, we can choose an orientation $\ora{H}$, and orient $e$ and $f$ away from $\ola{H}$ and towards $\ora{H}$. We can then distinguish between \emph{direct self-osculations} (if both or neither of $e,f$ point towards their shared vertex $x$) and \emph{indirect self-osculations} (if exactly one of $e,f$ points towards $x$). Two hyperplanes $H,K\sq Q$ \emph{inter-osculate} if they simultaneously cross and are dual to edges $e,f$ that intersect but do not span a square.

As has now become standard, we adopt the definition of specialness appearing in \cite{Wise-book} (in the original paper \cite{HW08} such cube complexes are instead called \emph{``$A$--special''}).

\begin{defn}\label{defn:special}
We say that a cube complex $Q$ is \emph{special} if it admits a locally isometric immersion into a (right-angled) Salvetti complex. Equivalently, its hyperplanes are embedded and $2$--sided, do not directly self-osculate and do not inter-osculate. We allow indirect self-osculations.
\end{defn}

If $Q$ is a special cube complex, we can consider the \emph{crossing graph} $\G_Q$ of its hyperplanes. Vertices of $\G_Q$ are hyperplanes of $Q$, with two hyperplanes connected by an edge if they cross. The standard Haglund--Wise construction shows that $Q$ admits a locally isometric immersion into the Salvetti complex $\mbb{S}_{\G_Q}$ of the RAAG $A_{\G_Q}$ (see \cite[Lemma~4.1]{HW08}).

However, this is not the most general form of a locally isometric immersion of $Q$ into a Salvetti complex, and sometimes it is possible to immerse $Q$ into smaller complexes. For this reason, we introduce the following concept (analogous e.g.\ to \cite[Definition~3.1]{Gen-relhyp}), which will play a key role in the definition of divisible cube complexes later in the paper (Definition~\ref{defn:divisible}). 

Recall that $\mscr{W}(Q)$ and $\mscr{O}(Q)$ are, respectively, the set of hyperplanes and oriented hyperplanes of the cube complex $Q$. If $\G$ is a graph, we denote by $\G^{\pm}$ the set consisting of two copies of every vertex $v\in\G^{(0)}$, labelled $v^-$ and $v^+$ respectively.

\begin{defn}\label{defn:special_colouring}
Let $Q$ be a special cube complex. A \emph{special colouring} of $Q$ is the data of a simplicial graph $\G$ and two maps\footnote{We denote both maps by the same letter and implicitly assume the first to only take vertices of $\G$ as values.} $\kappa\colon\mscr{W}(Q)\ra\G$ and $\kappa\colon\mscr{O}(Q)\ra\G^{\pm}$ satisfying the following conditions:
\begin{enumerate}
\setlength\itemsep{.25em}
\item if $\kappa(H)=v$ for a hyperplane $H\in\mscr{W}(Q)$, then $\{\kappa(\ora{H}),\kappa(\ola{H})\}=\{v^-,v^+\}$;
\item if $\ora{H},\ora{K}\in\mscr{O}(Q)$ are distinct oriented hyperplanes with $\kappa(\ora{H})=\kappa(\ora{K})$, then ${C(\ora{H})\cap C(\ora{K})=\emptyset}$; 
\item if $H,K\in\mscr{W}(Q)$ are hyperplanes with $C(H)\cap C(K)\neq\emptyset$, then $H$ and $K$ cross in $Q$ if and only if $\kappa(H)$ and $\kappa(K)$ are adjacent in $\G$.
\end{enumerate}
We denote special colourings simply as pairs $(\kappa,\G)$ and we refer to vertices of $\G$ as \emph{colours}. We say that a colouring is \emph{minimal} if it satisfies the following additional condition:
\begin{enumerate}
\setcounter{enumi}{3}
\item every vertex of $\G$ lies in the image of $\kappa$ and, for every edge $[v,w]\sq\G$, there exists at least one hyperplane in $\kappa^{-1}(v)$ that crosses a hyperplane in $\kappa^{-1}(w)$.
\end{enumerate}
\end{defn}

In other words, condition~(2) is saying that each subset $\kappa^{-1}(v)\sq\mscr{W}(Q)$ does not self-osculate ``as a family'', and condition~(3) asks that no two $\kappa^{-1}(v)$ and $\kappa^{-1}(w)$ inter-osculate\footnote{Here we are thinking of the following extension of the usual concept of inter-osculating pairs of hyperplanes: two sets of hyperplanes $\mc{U},\mc{V}\sq\mscr{W}(Q)$ ``inter-osculate'' if there exist two pairs $(\mf{u}_1,\mf{v}_1)$ and $(\mf{u}_2,\mf{v}_2)$ in $\mc{U}\x\mc{V}$ such that $\mf{u}_1$ and $\mf{v}_1$ cross, while $\mf{u}_2$ and $\mf{v}_2$ osculate (i.e.\ they are dual to edges that share a vertex without spanning a square).}. Condition~(4) can be added to ensure that $\G$ does not have any ``useless'' vertices or edges. Clearly, every special colouring can be promoted to a minimal one by removing some vertices and edges of $\G$.

Compared to $\kappa\colon\mscr{W}(Q)\ra\G$, the map $\kappa\colon\mscr{O}(Q)\ra\G^{\pm}$ can simply be viewed as a choice of orientation for every hyperplane (namely, the oriented hyperplane that gets mapped to an element of the form $v^+$). In practice, we will also think of this as a labelling of (unoriented) edge germs at each vertex of $Q$: if $e$ is an oriented edge dual to an oriented hyperplane $\ora{H}$ with $\kappa(\ora{H})=v^+$, we label the initial germ of $e$ by $v^+$ and its terminal germ by $v^-$.

\begin{defn}
A minimal special colouring is \emph{standard} if the map $\kappa\colon\mscr{W}(Q)\ra\G^{(0)}$ is a bijection.
\end{defn}

For a standard special colouring, the graph $\G$ is always isomorphic to the crossing graph $\G_Q$ of the hyperplanes of $Q$. However, in general, there are also other special colourings, where $\G$ is a proper quotient of $\G_Q$ and $\kappa$ is not injective. This simple observation is what underlies the difference between ``divisible'' and ``strongly divisible'' cube complexes in Section~\ref{subsec:divisible}.

The proof of the following lemma is classical and it is left to the reader. This result is not needed in the rest of the paper and we only include it here for motivation (also see \cite[Theorem~4.1]{Gen-relhyp}).

\begin{lem}
Let $Q$ be a special cube complex.
\begin{enumerate}
\setlength\itemsep{.25em}
\item If there exists a special colouring $\kappa\colon\mscr{W}(Q)\ra\G$, then $Q$ admits a locally isometric immersion into the Salvetti complex $\mbb{S}_{\G}$.
\item If $\rho\colon Q\looparrowright Q'$ is a locally isometric immersion with $Q'$ special, then $Q$ inherits a special colouring $\kappa\colon\mscr{W}(Q)\ra\G_{Q'}$, where each preimage $\kappa^{-1}(H')$ is the set of connected components of the preimage $\rho^{-1}(H')$ of the hyperplane $H'\sq Q'$ (each of which is a hyperplane of $Q$).
\end{enumerate}
\end{lem}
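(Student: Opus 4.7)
For part (1), the plan is to build a cubical map $\phi\colon Q\to\mbb{S}_{\G}$ by hand and then verify it is a local isometry via the link condition. Since $\mbb{S}_{\G}$ has a single vertex $*$, I would send every $0$-cube of $Q$ to $*$. For every oriented edge $e$ dual to $\ora{H}$ with $\kappa(\ora{H})=v^+$, I would send $e$ to the oriented loop of $\mbb{S}_{\G}$ corresponding to $v$, with matching orientation. This is well-defined on edges by condition~(1). To extend to a square dual to hyperplanes $H_1,H_2$: the carriers $C(H_1),C(H_2)$ both meet the corner vertex, so condition~(3) yields that $\kappa(H_1)$ and $\kappa(H_2)$ are adjacent in $\G$, giving a genuine $2$-cube in $\mbb{S}_{\G}$ to map into (and we need $\kappa(H_1)\neq\kappa(H_2)$, which holds because crossing hyperplanes have intersecting carriers and the same colour would violate~(2) applied to the correct orientations). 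Iterating gives an extension to every cube.

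To verify that $\phi$ is a locally isometric immersion, I would show that the induced map on each vertex link is injective with full image. Injectivity is exactly condition~(2) on germs: two oriented edge germs at $p$ sharing the label $v^+$ correspond to two distinct oriented hyperplanes with identical $\kappa$-image whose negative half-carriers both contain $p$, contradicting~(2). Fullness is exactly condition~(3) plus the no-inter-osculation half of specialness: two germs at $p$ whose images are adjacent in $\mathrm{lk}(*,\mbb{S}_{\G})$ correspond to hyperplanes $H_1,H_2$ meeting at $p$ and with adjacent colours, hence crossing by~(3), hence spanning a square at $p$ since $Q$ has no inter-osculations.

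For part (2), the setup is the standard fact that for a locally isometric immersion $\rho\colon Q\looparrowright Q'$ between non-positively curved complexes, each preimage $\rho^{-1}(H')$ of a hyperplane $H'\sq Q'$ is a disjoint union of hyperplanes of $Q$. I would define $\kappa\colon\mscr{W}(Q)\to\mscr{W}(Q')=\G_{Q'}^{(0)}$ by sending each component to $H'$, and lift the chosen orientation on $H'$ in the obvious way; condition~(1) is then immediate. For~(2), suppose $\ora{H}\neq\ora{K}$ are distinct oriented hyperplanes of $Q$ with $\kappa(\ora{H})=\kappa(\ora{K})$, so that $H\neq K$ (the case $H=K$ would force opposite orientations, hence opposite signs). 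If $p\in C(\ora{H})\cap C(\ora{K})$ is a vertex, then the two edges at $p$ dual to $H$ and $K$ and pointing toward their chosen positive sides descend to two distinct edges at $\rho(p)$ dual to the same oriented hyperplane $H'$ of $Q'$ with matching orientation, a direct self-osculation of $H'$ contradicting specialness of $Q'$. For~(3): if $H,K$ cross in $Q$, the image square shows that $\rho(H)$ and $\rho(K)$ are distinct (else $H'$ self-intersects) and cross in $Q'$; conversely, if $\kappa(H),\kappa(K)$ are adjacent and $C(H)\cap C(K)$ contains a vertex $p$, then two edges at $\rho(p)$ dual to $H'$ and $K'$ span a square by no inter-osculation in $Q'$, and this square lifts through $\rho$ by the local isometry, witnessing that $H$ and $K$ cross in $Q$.

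The only mildly subtle point is the book-keeping of orientations in condition~(2), since ``distinct oriented hyperplanes'' covers both $H\neq K$ and the case of opposite orientations on the same hyperplane; the latter case is automatically excluded by condition~(1) as soon as one checks that $\kappa(\ora{H})$ and $\kappa(\ola{H})$ always carry opposite signs. Beyond this, everything reduces to the standard dictionary between locally isometric immersions of special cube complexes and injective, full maps on links, which is why the lemma is classical.
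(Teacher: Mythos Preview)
The paper does not actually supply a proof of this lemma: it states that ``the proof of the following lemma is classical and it is left to the reader'', citing \cite[Theorem~4.1]{Gen-relhyp} for a related statement. Your argument is exactly the standard one and is correct; there is nothing to compare against.

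One small point worth tightening: in your injectivity argument for part~(1), you assert that two germs at $p$ sharing the label $v^+$ must come from \emph{distinct} oriented hyperplanes. Strictly speaking they could come from the same hyperplane $H$, i.e.\ two distinct edges at $p$ dual to $H$ with the same orientation; this is ruled out not by condition~(2) but by the absence of direct self-osculations in $Q$. You implicitly use specialness of $Q$ elsewhere, so this is only a matter of making the case split explicit. (Whether $p$ lies in the positive or negative half-carriers is a matter of convention and does not affect the argument, since condition~(2) applies equally to $\ora{H},\ora{K}$ and to $\ola{H},\ola{K}$.)
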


\subsection{Fixed subgroups}

The following is a classical observation (see e.g.\ Theorem~3.1 and Theorem~4.1 in \cite{Culler} for analogous statements about graphs).

\begin{lem}\label{lem:pi1_vs_fix}
Consider a non-positively curved metric space $X$ and a subgroup $\mf{F}\leq\Isom(X)$. 
\begin{enumerate}
\setlength\itemsep{.25em}
\item Suppose that $X$ is complete and $\mf{F}$ is finite, realising a subgroup $F\leq\Out(\pi_1(X))$. Then there  is a natural $1$--to--$1$ correspondence between connected components of $\Fix(\mf{F})\sq X$ and lifts of $F$ to finite subgroups $\wt F\leq\Aut(\pi_1(X))$ up to conjugacy by inner automorphisms of $\pi_1(X)$. In particular, such a lift $\wt F$ exists if and only if $\Fix(\mf{F})\neq\emptyset$.
\item Let $C\sq\Fix(\mf{F})$ be a connected component (assuming $\Fix(\mf{F})\neq\emptyset$). Choose a point $x\in C$, set $G:=\pi_1(X,x)$ and consider the induced subgroup $\mf{F}_*\leq\Aut(G)$. Then $\Fix(\mf{F}_*)\cong\pi_1(C)$.
\end{enumerate}
\end{lem}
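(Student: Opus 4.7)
The plan is to pass to the universal cover $\wt X$, which is complete CAT(0) by the hypotheses, and apply the Cartan/Bruhat--Tits fixed-point theorem: every finite group of isometries of a complete CAT(0) space has a non-empty, closed, convex (hence connected) fixed subset. Throughout, I will use the standard bijection between lifts $\wt{\mf{F}} \leq \Isom(\wt X)$ of $\mf{F}$ that normalise the deck group $\pi_1(X)$ and lifts $\wt F \leq \Aut(\pi_1(X))$ of $F$, under which conjugation by a deck transformation $g \in \pi_1(X)$ corresponds to conjugation by the inner automorphism $\iota_g$.

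For part~(1), the key ingredients are: (i) the fixed sets of distinct lifts $\wt{\mf{F}}_1 \neq \wt{\mf{F}}_2$ of $\mf{F}$ are disjoint in $\wt X$, since a common fixed point uniquely determines the lift that fixes it; (ii) if $\pi\colon\wt X\to X$ is the covering projection, two lifts project to overlapping subsets of $\Fix(\mf{F})$ only when they are $\pi_1(X)$-conjugate, by the same uniqueness; (iii) for any lift $\wt{\mf{F}}$, the set $\Fix(\wt{\mf{F}}) \sq \wt X$ is convex --- hence connected --- by the CAT(0) fixed-point theorem, so $\pi(\Fix(\wt{\mf{F}}))$ is contained in a single component of $\Fix(\mf{F})$. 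Combining (i)--(iii), the projection establishes a bijection between $\pi_1(X)$-conjugacy classes of lifts $\wt{\mf{F}}$ (equivalently, inner-conjugacy classes of lifts $\wt F \leq \Aut(\pi_1(X))$) and connected components of $\Fix(\mf{F})$, with surjectivity coming from lifting any $x \in \Fix(\mf{F})$ to $\wt x \in \wt X$ and taking the unique lift of $\mf{F}$ fixing $\wt x$.

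For part~(2), fix $x \in C$ and a lift $\wt x \in \wt X$, and let $\wt{\mf{F}}$ be the unique lift of $\mf{F}$ fixing $\wt x$. Identifying $G = \pi_1(X,x)$ with the deck group of $\wt X \to X$, I would exploit the identity $\wt f \circ g \circ \wt f^{-1} = f_*(g)$ in $\Isom(\wt X)$, for $f \in \mf{F}$ and $g \in G$. This yields the central observation: $g \cdot \wt x \in \Fix(\wt{\mf{F}})$ if and only if $g \in \Fix(\mf{F}_*)$. The inclusion $C \hookrightarrow X$ induces a map $\pi_1(C,x) \to G$ whose image lies in $\Fix(\mf{F}_*)$, since $\mf{F}$ fixes $C$ pointwise. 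For surjectivity, given $g \in \Fix(\mf{F}_*)$, the CAT(0) geodesic in $\wt X$ from $\wt x$ to $g \cdot \wt x$ lies in $\Fix(\wt{\mf{F}})$ by convexity, and projects to a loop in $C$ representing $g$ (using simple-connectedness of $\wt X$). For injectivity, $\Fix(\wt{\mf{F}})$ is itself convex, hence CAT(0) and simply connected; the restricted deck-action by $\Fix(\mf{F}_*)$ thus exhibits $\Fix(\wt{\mf{F}}) \to C$ as the universal cover of $C$.

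The main obstacle lies in part~(1), where tracking inner-conjugacy classes of lifts against connected components of $\Fix(\mf{F})$ rests crucially on the \emph{connectedness} of $\Fix(\wt{\mf{F}})$ in $\wt X$ --- a feature of CAT(0) geometry that would not be available for an arbitrary aspherical ambient $X$. Once this is set up, part~(2) becomes a clean consequence of convexity of fixed-point sets in $\wt X$ combined with classical covering-space theory applied to the component $C$.
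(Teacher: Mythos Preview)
Your proposal is correct and follows essentially the same line as the paper's proof: both pass to the universal cover $\wt X$, invoke the CAT(0) fixed-point theorem to get nonempty convex (hence connected) fixed sets for lifts $\wt{\mf F}$, use the identity $\wt f\, g\, \wt f^{-1}=f_*(g)$ to identify $\Fix(\mf F_*)$ with the $G$--stabiliser of $\Fix(\wt{\mf F})$, and then use local convexity of $C$ in $X$ to conclude that $\Fix(\wt{\mf F})\to C$ is the universal cover. The only place where the paper is a bit more explicit is in justifying that $\Fix(\wt{\mf F})$ really coincides with the based lift $\wt C$ of $C$ (via local convexity of $C$ and $\pi_1$--injectivity), which in your write-up is folded into the final sentence about the ``restricted deck-action''; you may want to spell that out, but the argument is the same.
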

\begin{proof}
We begin with part~(1). Set $G:=\pi_1(X,x)$ for some $x\in X$ and let $G\acts\wt X$ be the action on the universal cover by deck transformations. By assumption, the lifts to $\wt X$ of the elements of $\mf{F}$ extend this action to an isometric action $\wt G\acts\wt X$, for an extension $1\ra G\ra\wt G\ra F\ra 1$ such that the monodromy homomorphism $F\ra\Out(G)$ is precisely the natural embedding of $F\leq\Out(G)$.

If $F$ admits a lift to a finite subgroup $\wt F\leq\Aut(G)$, we can write $\wt G=G\rtimes\wt F$. To avoid confusion, denote by $\wt{\mf{F}}$ the image of $\wt F$ within $\Isom(\wt X)$ given by the action $\wt G\acts\wt X$. Since $\wt X$ is $\CAT$ and complete, the finite group $\wt{\mf{F}}$ fixes a point of $\wt X$, which implies that $\mf{F}$ fixes a point of $X$. The subset $\Fix(\wt{\mf{F}})\sq \wt X$ is convex, hence connected. It follows that it projects to a (whole) connected component of $\Fix(\mf{F})$. Furthermore, replacing $\wt F$ with a $G$--conjugate does not alter the projection to $X$ of $\Fix(\wt{\mf{F}})$. 

Conversely, consider a connected component $C\sq\Fix(\mf{F})$. Given a point $y\in C$ and a lift $\tilde y\in\wt X$, we can consider the subgroup $\wt F_{\tilde y}<\wt G$ of lifts of elements of $\mf{F}$ fixing $\tilde y$. If $\tilde z$ is a lift of another point $z\in C$, the subgroups $\wt F_{\tilde y}$ and $\wt F_{\tilde z}$ are $G$--conjugate.

Summing up, we have shown how to associate a connected component of $\Fix(\mf{F})$ to each $G$--conjugacy class of finite subgroups of $\wt G$ projecting isomorphically onto $F$, and conversely such a conjugacy class to each connected component of $\Fix(\mf{F})$. It is straightforward to check that these two operations are one the inverse of the other, so this completes the proof of part~(1).

We now prove part~(2). Consider again $G=\pi_1(X,x)$ and the action $G\acts\wt X$, where now we have $x\in C\sq\Fix(\mf{F})$. In particular, every $\Phi\in\mf{F}$ induces an automorphism $\Phi_*\colon G\ra G$. We denote by $\mf{F}_*\leq\Aut(G)$ the subgroup consisting of all these automorphisms.

Choose a lift $\tilde x\in\wt X$ of $x$ and let $\wt{\mf{F}}$ be the group of lifts of elements of $\mf{F}$ fixing $\tilde x$. We have $\wt\Phi\o g=\Phi_*(g)\o\wt\Phi$ for every $g\in G$ and every $\Phi\in\mf{F}$ with lift $\wt\Phi\in\wt{\mf{F}}$. This implies that the action of $\wt{\mf{F}}$ on $\wt X$ commutes with that of the subgroup $\Fix(\mf{F}_*)\leq G$. It follows that the subgroup $\Fix(\mf{F}_*)$ preserves the subset $\Fix(\wt{\mf{F}})\sq\wt X$.

Conversely, suppose that some $g\in G$ maps a point $y\in\Fix(\wt{\mf{F}})$ to a point $gy\in\Fix(\wt{\mf{F}})$. Then
\[ gy=\wt\Phi(gy)=\Phi_*(g)\wt\Phi(y)=\Phi_*(g)y, \]
for each $\Phi\in\mf{F}$, which implies that $g\in\Fix(\mf{F}_*)$ since $G$ acts freely on $\wt X$. In conclusion, $\Fix(\mf{F}_*)$ is the $G$--stabiliser of $\Fix(\wt{\mf{F}})\sq\wt X$ and the distinct $G$--translates of $\Fix(\wt{\mf{F}})$ are pairwise disjoint.

Now, $C\sq X$ is locally convex, as it is a connected component of the fixed set of an isometry of $X$ and $X$ is locally uniquely geodesic. Thanks to non-positive curvature, it follows that $C$ is $\pi_1$--injective in $X$ and the lift of $C$ based at $\tilde x$ is an embedded copy $\wt C\sq\wt X$ of the universal cover of $C$. It is clear that $\wt C=\Fix(\wt{\mf{F}})$. We have seen that $\Fix(\mf{F}_*)$ is the $G$--stabiliser of $\wt C$, so it follows that $\pi_1(C)\cong\Fix(\mf{F}_*)$, completing the proof of part~(2).
\end{proof}

\subsection{Automorphisms of RAAGs}

Let $A_{\G}$ be a right-angled Artin group. The vertices of the graph $\G$ form a generating set of $A_{\G}$ and we will refer to them as the \emph{standard generators}. 

Laurence and Servatius \cite{Laurence,Servatius} showed that the automorphism group $\Aut(A_{\G})$ is generated by the following elementary automorphisms.
\begin{itemize}
\item \emph{Graph automorphisms}. Every automorphism of the graph $\G$ gives a permutation of the standard generating set of $A_{\G}$ that defines an automorphism of $A_{\G}$.
\item \emph{Inversions}. For each vertex $v\in\G$, there is an automorphism of $A_{\G}$ that maps $v\mapsto v^{-1}$ and fixes all other standard generators.
\item \emph{Partial conjugations}. For every vertex $v\in\G$ and every connected component $C\sq\G-\St(v)$, there is an automorphism of $A_{\G}$ that maps $x\mapsto vxv^{-1}$ for every $x\in C^{(0)}$ and fixes all other standard generators.
\item \emph{Transvections}. For any two vertices $v,w\in\G$ with $\lk(w)\sq\St(v)$, there is an automorphism of $A_{\G}$ that maps $w\mapsto wv$ and fixes all other standard generators.

We distinguish between \emph{folds} (when $v$ and $w$ are not adjacent in $\G$, or equivalently when $\lk(v)\sq\lk(w)$) and \emph{twists} (when $v$ and $w$ are adjacent in $\G$, or equivalently $\St(v)\sq\St(w)$).
\end{itemize}

An automorphism is \emph{untwisted} \cite{CSV} if it lies in the subgroup $U(A_{\G})\leq\Aut(A_{\G})$ generated by graph automorphisms, inversions, partial conjugations and folds (we are ruling out twists). By \cite{Fio10a}, untwisted automorphisms are precisely those that coarsely preserve the median operator induced on $A_{\G}$ by the universal cover of the Salvetti complex and its cubical structure.

We also define the \emph{pure} untwisted subgroup $U^0(A_{\G})\leq U(A_{\G})$ as the finite-index subgroup generated by inversions, partial conjugations and folds (we are now also ruling out graph automorphisms). Note that, although graph automorphisms are not allowed as generators of $U^0(A_{\G})$, some of them can still lie end up lying in $U^0(A_{\G})$ (see e.g.\ \cite[Proposition~3.3]{Day-Wade}).

Inner automorphisms lie in $U^0(A_{\G})$, as they are products of partial conjugations. Thus, whether an automorphism $\varphi\in\Aut(A_{\G})$ belongs to $U^0(A_{\G})$ or not depends only on its projection $[\varphi]\in\Out(A_{\G})$. The same applies to $U(A_{\G})$.

\section{Fixed subgroups of finite-order untwisted automorphisms}\label{sect:meat}

This section is devoted to the proof of Theorem~\ref{thmintro:divisible_vs_fix} and Corollary~\ref{corintro:examples}. 

In Subsection~\ref{subsec:divisible}, we define divisible cube complexes and provide a few examples. In particular, Examples~\ref{ex:strongly_divisible} and~\ref{ex:braid_divisible} show that the groups appearing in Corollary~\ref{corintro:examples} are fundamental groups of such cube complexes.

In Subsection~\ref{subsec:host}, we show that every divisible cube complex $Q$ can be \emph{embedded} (rather than simply immersed) into a ``host'': a particularly nice cube complex whose fundamental group is a right-angled Artin group, and which resembles one of the Salvetti blowups of \cite{CSV} (though is not always one). Then, in Subsection~\ref{subsec:extended_host}, we enlarge the host so that it supports a cubical automorphisms having the subcomplex $Q$ as a full connected component of its fixed set. This proves one direction of the equivalence in Theorem~\ref{thmintro:divisible_vs_fix}, namely Theorem~\ref{thm:divisible->fix}.

Finally, in Subsection~\ref{subsec:converse} we prove the other half of Theorem~\ref{thmintro:divisible_vs_fix}, see Proposition~\ref{prop:fix->divisible}.

\subsection{Divisible cube complexes}\label{subsec:divisible}

Let $Q$ be a compact special cube complex. 

Before introducing divisible cube complexes, it is useful to define the following simpler and slightly stronger concept, which is sufficient to prove most of Corollary~\ref{corintro:examples} (see Example~\ref{ex:strongly_divisible}). 

\begin{defn}[Strongly divisible]\label{defn:strongly_divisible}
We say that $Q$ is \emph{strongly divisible} if \mbox{both of the following hold.}
\begin{enumerate}
\setlength\itemsep{.25em}
\item For any two distinct vertices $x,y\in Q^{(0)}$, there exist pairwise-disjoint hyperplanes $H_1,\dots,H_k$ such that $x$ and $y$ lie in distinct connected components of $Q-(H_1\cup\dots\cup H_k)$.
\item For any vertex $x\in Q^{(0)}$ and any hyperplane half-carrier $C\sq Q$ such that $x\not\in C$, there exist pairwise-disjoint hyperplanes $H_1,\dots,H_k$ such that $x$ and the half-carrier $C$ are contained in distinct connected components of $Q-(H_1\cup\dots\cup H_k)$.
\end{enumerate}
\end{defn}

A ``divisible'' cube complex is similar to a strongly divisible one, except that there might be certain sets of hyperplanes that we cannot distinguish from each other, as given by a special colouring.

If $(\kappa,\G)$ is a special colouring for $Q$ (as in Definition~\ref{defn:special_colouring}), we denote by $C(v)\sq Q$ the union of the carriers of the hyperplanes in the preimage $\kappa^{-1}(v)\sq\mscr{W}(Q)$. Similarly, for $\eps\in\{\pm\}$, we denote by $C(v^{\eps})$ the union of the positive carriers of the oriented hyperplanes in $\kappa^{-1}(v^{\eps})\sq\mscr{O}(Q)$. We still refer to $C(v)$ as a (generalised) \emph{carrier}, and to $C(v^-)$ and $C(v^+)$ as (generalised) \emph{half-carriers}.

\begin{defn}[Dividing pattern]\label{defn:dividing_pattern}
A \emph{dividing pattern} for $Q$ is the data of a collection $\mc{H}_1,\dots,\mc{H}_n$ of partitions $Q=\mc{H}_i^-\sqcup\mc{H}_i^0\sqcup\mc{H}_i^+$ and a minimal special colouring $(\kappa,\G)$ \mbox{such that the following hold.}
\begin{enumerate}
\setlength\itemsep{.25em}
\item Each set $\mc{H}_i^0$ is a union of pairwise-disjoint hyperplanes of $Q$. If $\mc{H}_i^0$ contains a hyperplane $H\in\mscr{W}(Q)$, then it also contains all other hyperplanes $K\in\mscr{W}(Q)$ with $\kappa(H)=\kappa(K)$.
\item Each of the sets $\mc{H}_i^{\pm}$ is a union of connected components of $Q-\mc{H}_i^0$.
\item If $\mc{H}_i^0$ contains the hyperplanes in some preimage $\kappa^{-1}(v)\sq\mscr{W}(Q)$, then either $C(v^+)\sq\mc{H}_i^+$ and $C(v^-)\sq\mc{H}_i^-$, or $C(v^+)\sq\mc{H}_i^-$ and $C(v^-)\sq\mc{H}_i^+$.
\item The partitions $\mc{H}_i$ suffice to separate vertices of $Q$ from each other and from half-carriers. That is:
	\begin{itemize}
	\item if $x,y\in Q^{(0)}$ are distinct, then $x\in\mc{H}_i^-$ and $y\in\mc{H}_i^+$ (or vice versa) for at least one $i$;
	\item if $x\in Q^{(0)}$ lies outside the half-carrier $C(v^{\eps})$ for some $v\in\G$ and $\eps\in\{\pm\}$, then there exist $i$ and $\eta\in\{\pm\}$ such that $x\in\mc{H}_i^{\eta}$ and $C(v^{\eps})\sq\mc{H}_i^{-\eta}$. 
	\end{itemize}
\end{enumerate}
\end{defn}

\begin{defn}[Divisible]\label{defn:divisible}
We say that $Q$ is \emph{divisible} if it admits a dividing pattern.
\end{defn}

The following easy observation shows how strongly divisible cube complexes are divisible. Recall that standard special colourings are those for which the map $\kappa\colon\mscr{W}(Q)\ra\G^{(0)}$ is a bijection.

\begin{lem}
A compact special cube complex $Q$ is strongly divisible if and only if it admits a dividing pattern with respect to a standard special colouring.
\end{lem}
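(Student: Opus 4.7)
My plan is to treat the two implications separately. The ``if'' direction is essentially tautological once one unpacks the definitions: given a dividing pattern $(\kappa,\G,\{\mc{H}_i\})$ with $\kappa\colon\mscr{W}(Q)\to\G^{(0)}$ a bijection, condition~(1) of Definition~\ref{defn:dividing_pattern} says each $\mc{H}_i^0$ is a union of pairwise-disjoint hyperplanes, and condition~(4) supplies exactly the required separations of vertex pairs and vertex--half-carrier pairs. Crucially, because $\kappa$ is a bijection, each generalised half-carrier $C(v^\pm)$ equals the ordinary half-carrier of the unique hyperplane $\kappa^{-1}(v)$, so the vertex--half-carrier clause of Definition~\ref{defn:dividing_pattern}(4) translates verbatim into Definition~\ref{defn:strongly_divisible}(2).

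For the ``only if'' direction, suppose $Q$ is strongly divisible. I first equip $Q$ with the standard colouring by taking $\G:=\G_Q$ (the crossing graph), $\kappa$ the tautological bijection on $\mscr{W}(Q)$, and an arbitrary orientation of each hyperplane to extend $\kappa$ to $\mscr{O}(Q)\to\G_Q^\pm$; conditions~(1)--(4) of Definition~\ref{defn:special_colouring} follow routinely from the definition of $\G_Q$. Next, for each pair of distinct vertices, and for each vertex--half-carrier pair requiring separation, strong divisibility furnishes pairwise-disjoint hyperplanes $H_1,\dots,H_k$; I set $\mc{H}_i^0:=H_1\cup\cdots\cup H_k$ and distribute the components of $Q-\mc{H}_i^0$ between $\mc{H}_i^-$ and $\mc{H}_i^+$.

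The distribution is chosen via a bipartite-colouring argument. Form the multigraph $\mf{G}$ with vertices the connected components of $Q-\mc{H}_i^0$, placing an edge between $C$ and $C'$ for each $H_j\in\mc{H}_i^0$ along which they are locally adjacent. A closed walk in $\mf{G}$ lifts to a closed path in $Q$ that crosses each $H_j$ an even number of times (by $2$-sidedness), so its length is even and $\mf{G}$ is bipartite. Any $2$-colouring gives $\mc{H}_i^\pm$ satisfying condition~(2) of Definition~\ref{defn:dividing_pattern}, and a label swap then ensures the two targets of separation end up on opposite sides, yielding condition~(4). Condition~(3) is essentially automatic: $2$-sidedness also forces every component $C$ of $Q-\mc{H}_i^0$ to lie on a single local side of each $H_j\in\mc{H}_i^0$ (otherwise a loop in $C$ would cross $H_j$ an odd number of times), so the half-carriers of $H_j$ inherit opposite colours.

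The main obstacle I anticipate is verifying condition~(3) when some $H_j\in\mc{H}_i^0$ is not a carrier retract: then $C(\ora{H_j})$ and $C(\ola{H_j})$ share an indirect self-osculation vertex and cannot sit on opposite sides. I expect to handle this by showing that, in a strongly divisible $Q$, the separating collections supplied by Definition~\ref{defn:strongly_divisible} can always be chosen among carrier-retract hyperplanes --- essentially because a shared osculation vertex sits in both half-carriers and would obstruct one of the vertex--half-carrier separations guaranteed by Definition~\ref{defn:strongly_divisible}(2). This verification is the technical core of the forward direction; the bipartite $2$-colouring above is standard once the carrier-retract property is in place.
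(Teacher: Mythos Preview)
Your ``if'' direction is correct and matches the paper's.

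The ``only if'' direction has a genuine gap: the bipartiteness of $\mf{G}$ is false in general. Two-sidedness of a hyperplane $H$ only says that $H$ disconnects the interior of its own carrier; it does \emph{not} force closed loops in $Q$ to cross $H$ evenly. For a concrete failure, take $Q$ to be the $3$--cycle (three vertices, three edges): this is special and strongly divisible, and the collection of all three hyperplanes separates any two vertices, yet $Q-\mc{H}_i^0$ has three components whose adjacency graph is a triangle. No $2$--colouring exists, so your construction of $\mc{H}_i^{\pm}$ and with it your verification of condition~(3) both collapse.

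The paper's proof avoids this by never attempting a bipartition. It takes $\mc{H}^+$ to be a \emph{single} connected component $\mf{C}$ of $Q-\mf{H}$ and puts everything else into $\mc{H}^-$; condition~(2) is then automatic. For condition~(3), since the hyperplanes in $\mf{H}$ are pairwise disjoint, each half-carrier $C(\ora{H})$ with $H\in\mf{H}$ lies in a single component of $Q-\mf{H}$, so it is either contained in or disjoint from $\mf{C}$; the paper then prunes $\mf{H}$ down to those $H$ with exactly one half-carrier in $\mf{C}$ and checks that this pruning does not destroy the separations needed for condition~(4). This pruning step also dissolves your self-osculation worry: a hyperplane with an indirect self-osculation has both half-carriers in the same component, hence is automatically discarded, and no separate carrier-retract argument is required.
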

\begin{proof}
If $Q$ admits a dividing pattern in which all the preimages $\kappa^{-1}(v)\sq\mscr{W}(Q)$ are singletons, then it is clear that $Q$ is strongly divisible. 

Conversely, suppose that $Q$ is strongly divisible and we want to construct a dividing pattern with respect to a standard special colouring $\kappa\colon\mscr{W}(Q)\ra\G_Q$. Consider the set $\mscr{H}$ of all pairs $(\mf{H},\mf{C})$, where $\mf{H}\sq Q$ is a union of pairwise disjoint hyperplanes, and $\mf{C}$ is a connected component of $Q-\mf{H}$. Let $\mscr{H}^*\sq\mscr{H}$ be the subset of pairs $(\mf{H},\mf{C})$ with the additional property that $\mf{C}$ intersects exactly one half-carrier of each hyperplane contained in $\mf{H}$; in other words, no hyperplane in $\mf{H}$ has the $0$--skeleton of its carrier entirely contained in $\mf{C}$, and no hyperplane in $\mf{H}$ is separated from $\mf{C}$ by the other hyperplanes in $\mf{H}$.

Each element $\pi=(\mf{H},\mf{C})\in\mscr{H}$ determines a partition $\mc{H}_{\pi}$ with $\mc{H}_{\pi}^0=\mf{H}$, $\mc{H}_{\pi}^+=\mf{C}$ and $\mc{H}_{\pi}^-=Q-(\mf{H}\cup\mf{C})$. Each such partition satisfies conditions~(1) and~(2) in Definition~\ref{defn:dividing_pattern}, since the special colouring $\kappa$ is assumed to be injective. If $\pi\in\mscr{H}^*$, then the partition $\mc{H}_{\pi}$ also satisfies condition~(3) of the same definition (in particular, $\mc{H}_{\pi}^0$ and $\mc{H}_{\pi}^{\pm}$ are all nonempty).

Finally, since $Q$ is strongly divisible, the collection of partitions $\{\mc{H}_{\pi}\mid\pi\in\mscr{H}\}$ clearly satisfies condition~(4) of Definition~\ref{defn:dividing_pattern}. Note that, if two subsets $A,B\sq Q$ are separated by a partition $\mc{H}_{\pi}$ with $\pi\in\mscr{H}$, in the sense that $A\sq\mc{H}_{\pi}^+$ and $B\sq\mc{H}_{\pi}^-$ or vice versa, then $A,B$ are also separated by some $\mc{H}_{\s}$ with $\s\in\mscr{H}^*$. Indeed, if $\pi=(\mf{H},\mf{C})$, let $\mf{H}'\sq\mf{H}$ be the subset formed by hyperplanes of which $\mf{C}$ intersects exactly one half-carrier, and let $\mf{C}'\supseteq\mf{C}$ be the set obtained by adding to $\mf{C}$ all hyperplanes contained in $\mf{H}-\mf{H}'$ and such that all vertices of their carrier lie in $\mf{C}$. Then, setting $\s:=(\mf{H}',\mf{C}')$, we have $\s\in\mscr{H}^*$ and any two subsets separated by $\mc{H}_{\pi}$ are also separated by $\mc{H}_{\s}$, since we have both $\mc{H}_{\pi}^-\sq\mc{H}_{\s}^-$ and $\mc{H}_{\pi}^+\sq\mc{H}_{\s}^+$.

In conclusion, the collection of partitions $\{\mc{H}_{\pi}\mid\pi\in\mscr{H}^*\}$ satisfies all four conditions from Definition~\ref{defn:dividing_pattern}. (Of course, this is a much larger dividing pattern than necessary, and it has the feature that all positive sides $\mc{H}_{\pi}^+$ are connected, which is also not strictly necessary.)
%
\end{proof}

Note that condition~(3) in Definition~\ref{defn:dividing_pattern} is not much of a requirement when $(\kappa,\G)$ is standard, but it actually places rather strong restrictions when the preimages $\kappa^{-1}(v)$ are not singletons. Every hyperplane in $\mc{H}_i^0$ must be actively involved in separating $\mc{H}_i^-$ from $\mc{H}_i^+$, but at the same time $\mc{H}_i^0$ is not allowed to only contain \emph{some} of the hyperplanes in a given $\kappa^{-1}(v)$, because of Definition~\ref{defn:dividing_pattern}(1).

The sets $\kappa^{-1}(v)\sq\mscr{W}(Q)$ and $\mc{H}_i^0\sq Q$ appearing in Definition~\ref{defn:dividing_pattern} might appear superficially similar, but we emphasise that they are subject to rather different requirements. For one thing, we have $\kappa^{-1}(v)\cap\kappa^{-1}(v')=\emptyset$ for $v\neq v'$, whereas $\mc{H}_i^0$ and $\mc{H}_{i'}^0$ can share hyperplanes for $i\neq i'$. In addition, the hyperplanes in a given $\kappa^{-1}(v)$ must have pairwise disjoint (oriented) carriers, whereas the hyperplanes appearing in a given $\mc{H}_i^0$ are simply required to be pairwise disjoint. We also allow ``inter-osculations'' between the $\mc{H}_i^0$, and between some $\mc{H}_i^0$ and some $\kappa^{-1}(v)$.

The following two examples show how to deduce Corollary~\ref{corintro:examples} from Theorem~\ref{thmintro:divisible_vs_fix}.

\begin{ex}[Strongly divisible examples]\label{ex:strongly_divisible}
The following groups can be realised as fundamental groups of strongly divisible compact special cube complexes.
\begin{enumerate}
\setlength\itemsep{.25em}
\item \emph{The fundamental group of any closed orientable surface $S_g$.}

This is clear for $g=0,1$. When $g\geq 2$, consider the square complex $Q_g$ dual to the family of curves on $S_g$ depicted in Figure~\ref{fig:surface_1}. The latter consists of $3g-3$ blue curves forming a pants decomposition of $S_g$, and $g+1$ additional red curves that cut each pant into a ``top'' hexagon and a ``bottom'' one. 

It is clear that $\pi_1(Q_g)\cong\pi_1(S_g)$, that $Q_g$ is special and that it has $4g-4$ vertices, one for each hexagon. For any hexagon $X\sq S_g$, consider the three blue curves that bound the pant $P$ of which $X$ is part: they separate $X$ from any hexagon outside $P$, as well as from every blue/red curve other than the six bounding $X$. In addition, the top and bottom hexagons of each pant are separated by the red curves. This shows that $Q_g$ is strongly divisible.

\begin{figure}[ht]
\begin{tikzpicture}
\draw[thick] (0,0) ellipse (5.6cm and 2.5cm);
\draw[thick] (-4.5,0) ellipse (0.5cm and 0.5cm);
\draw[thick] (-3,0) ellipse (0.5cm and 0.5cm);
\draw[thick] (-1.5,0) ellipse (0.5cm and 0.5cm);
\draw[thick] (1.5,0) ellipse (0.5cm and 0.5cm);
\draw[thick] (3,0) ellipse (0.5cm and 0.5cm);
\draw[thick] (4.5,0) ellipse (0.5cm and 0.5cm);
\node at (0,0) {$\dots$};
\tikzset{
    partial ellipse/.style args={#1:#2:#3}{
        insert path={+ (#1:#3) arc (#1:#2:#3)}
    }
}
\draw[ultra thick, blue] (-5.3,0) [partial ellipse=0:180:0.3cm and 0.15cm];
\draw[ultra thick, blue, dashed] (-5.3,0) [partial ellipse=180:360:0.3cm and 0.15cm];
\draw[ultra thick, blue] (-3.75,0) [partial ellipse=0:180:0.25cm and 0.15cm];
\draw[ultra thick, blue, dashed] (-3.75,0) [partial ellipse=180:360:0.25cm and 0.15cm];
\draw[ultra thick, blue] (-2.25,0) [partial ellipse=0:180:0.25cm and 0.15cm];
\draw[ultra thick, blue, dashed] (-2.25,0) [partial ellipse=180:360:0.25cm and 0.15cm];
\draw[ultra thick, blue] (2.25,0) [partial ellipse=0:180:0.25cm and 0.15cm];
\draw[ultra thick, blue, dashed] (2.25,0) [partial ellipse=180:360:0.25cm and 0.15cm];
\draw[ultra thick, blue] (3.75,0) [partial ellipse=0:180:0.25cm and 0.15cm];
\draw[ultra thick, blue, dashed] (3.75,0) [partial ellipse=180:360:0.25cm and 0.15cm];
\draw[ultra thick, blue] (5.3,0) [partial ellipse=0:180:0.3cm and 0.15cm];
\draw[ultra thick, blue, dashed] (5.3,0) [partial ellipse=180:360:0.3cm and 0.15cm];
\draw[ultra thick, blue, dashed] (-3,1.3) [partial ellipse=-90:90:0.25cm and 0.8cm];
\draw[ultra thick, blue] (-3,1.3) [partial ellipse=90:270:0.25cm and 0.8cm];
\draw[ultra thick, blue, dashed] (-1.5,1.45) [partial ellipse=-90:90:0.3cm and 0.95cm];
\draw[ultra thick, blue] (-1.5,1.45) [partial ellipse=90:270:0.3cm and 0.95cm];
\draw[ultra thick, blue] (3,1.3) [partial ellipse=-90:90:0.25cm and 0.8cm];
\draw[ultra thick, blue, dashed] (3,1.3) [partial ellipse=90:270:0.25cm and 0.8cm];
\draw[ultra thick, blue] (1.5,1.45) [partial ellipse=-90:90:0.3cm and 0.95cm];
\draw[ultra thick, blue, dashed] (1.5,1.45) [partial ellipse=90:270:0.3cm and 0.95cm];
\draw[ultra thick, blue, dashed] (-3,-1.3) [partial ellipse=-90:90:0.25cm and 0.8cm];
\draw[ultra thick, blue] (-3,-1.3) [partial ellipse=90:270:0.25cm and 0.8cm];
\draw[ultra thick, blue, dashed] (-1.5,-1.45) [partial ellipse=-90:90:0.3cm and 0.95cm];
\draw[ultra thick, blue] (-1.5,-1.45) [partial ellipse=90:270:0.3cm and 0.95cm];
\draw[ultra thick, blue] (3,-1.3) [partial ellipse=-90:90:0.25cm and 0.8cm];
\draw[ultra thick, blue, dashed] (3,-1.3) [partial ellipse=90:270:0.25cm and 0.8cm];
\draw[ultra thick, blue] (1.5,-1.45) [partial ellipse=-90:90:0.3cm and 0.95cm];
\draw[ultra thick, blue, dashed] (1.5,-1.45) [partial ellipse=90:270:0.3cm and 0.95cm];
\draw[very thick, red] (0,0) ellipse (5.43cm and 2.33cm);
\draw[very thick, red] (-4.5,0) ellipse (0.67cm and 0.67cm);
\draw[very thick, red] (-3,0) ellipse (0.67cm and 0.67cm);
\draw[very thick, red] (-1.5,0) ellipse (0.67cm and 0.67cm);
\draw[very thick, red] (1.5,0) ellipse (0.67cm and 0.67cm);
\draw[very thick, red] (3,0) ellipse (0.67cm and 0.67cm);
\draw[very thick, red] (4.5,0) ellipse (0.67cm and 0.67cm);
\end{tikzpicture}
\caption{A family of curves on $S_g$ whose dual square complex is strongly divisible.}
\label{fig:surface_1}
\end{figure}

\item \emph{The commutator subgroup $W_{\G}'$ of any right-angled Coxeter group $W_{\G}$.} 

A special cube complex $Q_{\G}$ with fundamental group $W_{\G}'$ was described in \cite{Droms}; it is simply the quotient of the Davis complex for $W_{\G}$ (introduced in \cite{Davis-Annals,Davis-book}) by the action of $W_{\G}'$ . To construct $Q_{\G}$, start with the cube $[0,1]^V$ with $V=\G^{(0)}$ and label each of its edges by the corresponding vertex of $\G$. The cube complex $Q_{\G}$ is the subcomplex of $[0,1]^V$ formed by those cubes whose edges are labelled by the vertices of a clique of $\G$. 

Since $(\Z/2\Z)^V$ acts vertex-transitively on $Q_{\G}$ by flipping the intervals $[0,1]$, it suffices to consider the vertex $\underline{0}=(0,\dots,0)\in Q_{\G}$ in order to prove that $Q_{\G}$ is strongly divisible.

For each $v\in V$, let $\mc{H}_v^0\sq Q_{\G}$ be the union of all hyperplanes of $Q_{\G}$ dual to edges labelled by $v$. The complement of $\mc{H}_v^0$ has only two connected components, one containing the vertex $\underline{0}=(0,\dots,0)$ (call it $\mc{H}_v^-$) and one containing the vertex $\underline{1}=(1,\dots,1)$ (call it $\mc{H}_v^+$). More precisely, a vertex of $Q_{\G}$ lies in $\mc{H}_v^-$ if its coordinate in position $v$ is $0$, and it lies in $\mc{H}_v^+$ if this coordinate is $1$.

Note that $\bigcap_{v\in V}\mc{H}_v^-=\{\underline{0}\}$, so $Q_{\G}$ satisfies part~(1) of Definition~\ref{defn:strongly_divisible}. Regarding part~(2), consider a hyperplane $H\sq Q_{\G}$ such that $\underline{0}\not\in C(H)$; say that $H$ is labelled by some $v\in V$. If some $w\in V-\{v\}$ is not connected to $v$ by an edge of $\G$, then all vertices of $C(H)$ have the same coordinate $\eps_w\in\{0,1\}$ in position $w$. Since $\underline{0}\not\in C(H)$, there must be at least one such $w\in V$ with $\eps_w=1$ (as all other coordinates vary freely). It follows that $C(H)\sq\mc{H}_w^+$, while $\underline{0}\in\mc{H}_w^-$. In conclusion, the cube complex $Q_{\G}$ is strongly divisible.
\end{enumerate}
\end{ex}

\begin{ex}[Graph braid groups are divisible]\label{ex:braid_divisible}
Let $\G$ be a connected, finite, simplicial graph and consider an integer\footnote{The unmarked configuration space $UC_n(\G)$ described in the next paragraph makes sense for all $0\leq n\leq\#\G^{(0)}$. However, setting $v:=\#\G^{(0)}$, we have $UC_0(\G)\cong UC_v(\G)\cong\{\ast\}$ and $UC_1(\G)\cong UC_{v-1}(\G)\cong\G$, so its fundamental group is never particularly interesting in these cases, justifying our stronger restrictions on the integer $n$.} $2\leq n\leq\#\G^{(0)}-2$. The graph braid group $B_n(\G)$ is the fundamental group of a divisible, compact, special cube complex.

To begin with, $B_n(\G)$ is defined as the fundamental group of the unmarked configuration space $UC_n(\G)$. This is a connected, compact, special cube complex having a vertex for every cardinality--$n$ subset $S\sq\G^{(0)}$. Two such subsets $S,S'$ are joined by an edge $E\sq UC_n(\G)$ if their symmetric difference $S\triangle S'$ is $\{x,x'\}$ where $x,x'$ are the vertices of an edge $e\sq\G$; we will say that $E$ is \emph{subordinate} to $e$. There is a $k$--cube in $UC_n(\G)$ for any $k$ pairwise-disjoint edges $e_1,\dots,e_k\sq\G$ and any cardinality--$(n-k)$ subset $S_0\sq\G-(e_1\cup\dots\cup e_k)$: the vertices of this cube are the $2^k$ subsets of $\G^{(0)}$ of the form $S_0\cup\{x_1^{\eps_1},\dots,x_k^{\eps_k}\}$, where $\eps_i\in\{\pm\}$ and $x_i^-,x_i^+$ are the vertices of $e_i$ (see e.g.\ \cite[Section~3]{Gen-IJAC} for more details).

Let us show that the cube complex $Q:=UC_n(\G)$ is divisible. 

First note that, for every hyperplane $H\in\mscr{W}(Q)$, all edges of $Q$ dual to $H$ are subordinate to the same edge $e\sq\G$; thus, we will also say that $H$ is subordinate to $e$. In general, it is possible for distinct hyperplanes of $Q$ to be subordinate to the same edge of $\G$ (see \cite[Lemma~3.6]{Gen-IJAC}). If $e\sq\G$ is an edge with vertices $x,x'$, then a vertex $S\in Q$ lies in the carrier of a hyperplane $H$ subordinate to $e$ if and only if $\#S\cap\{x,x'\}=1$; in this case, the two half-carriers of $H$ corresponds to vertices $S\in C(H)\sq Q$ with $S\cap\{x,x'\}=\{x\}$ and $S\cap\{x,x'\}=\{x'\}$, respectively.

We can define a (minimal) special colouring $\kappa\colon\mscr{W}(Q)\ra\mc{E}(\G)$, where $\mc{E}(\G)$ is the graph having a vertex for every edge of $\G$, with two edges of $\G$ connected by an edge of $\mc{E}(\G)$ exactly when then are disjoint. The map $\kappa$ simply assigns to each hyperplane of $Q$ the edge of $\G$ that it is subordinate to. In order to define the map $\kappa\colon\mscr{O}(Q)\ra\mc{E}(\G)^{\pm}$, it is convenient to think of $\mc{E}(\G)^{\pm}$ as the set of pairs $(e,x)$ where $e\sq\G$ is an edge and $x$ is one of the two vertices of $e$. If $H\in\mscr{W}(Q)$ is a hyperplane subordinate to $e$ and if $\ora{H}$ is an orientation on $H$, then all vertices $S$ of the positive carrier $C(\ora{H})\sq Q$ are subsets of $\G$ containing the same vertex of $e$; calling $x$ this vertex, we define $\kappa(\ora{H})=(e,x)$. This gives the map $\kappa\colon\mscr{O}(Q)\ra\mc{E}(\G)^{\pm}$ that we will consider in our special colouring (which is identical to the one in \cite[Proposition~3.7]{Gen-IJAC}).

Now, we can construct a dividing pattern with respect to this special colouring. For each vertex $a\in\G$, there is a partition $Q=\mc{H}_a^-\sqcup\mc{H}_a^0\sqcup\mc{H}_a^+$ defined as follows. The set $\mc{H}_a^0\sq Q$ is the union of all hyperplanes of $Q$ subordinate to edges of $\G$ incident to $a$; the latter edges pairwise intersect, so the hyperplanes in $\mc{H}_a^0$ are pairwise disjoint. The set $\mc{H}_a^+$ is the union of all connected components of $Q-\mc{H}_a^0$ containing vertices $S\in Q$ with $a\in S$; the set $\mc{H}_a^-$ is defined similarly in terms of vertices $S\in Q$ with $a\not\in S$. It is clear that the family of partitions $\{\mc{H}_a \mid a\in\G^{(0)}\}$ and the special colouring $(\kappa,\mc{E}(\G))$ together satisfy conditions~(1), (2) and~(3) in Definition~\ref{defn:dividing_pattern}.

We are only left to check condition~(4). If vertices $S,S'\in Q$ are distinct, then there exists a vertex $a\in\G$ such that $a\in S$ and $a\not\in S'$. In other words $S\in\mc{H}_a^+$ and $S'\in\mc{H}_a^-$, showing that the first half of Definition~\ref{defn:dividing_pattern}(4) is satisfied. For the second half, consider a vertex $S\in Q$ and an oriented hyperplane $\ora{H}\in\mscr{O}(Q)$ with $\kappa(\ora{H})=(e,x)$. Note that the generalised half-carrier $C(e,x)$ (the union of the positive carriers of the oriented hyperplanes in $\kappa^{-1}(e,x)$) contains $S$ if and only if $S\cap e=\{x\}$. If $S\not\in C(e,x)$, then either $x\not\in S$ (in which case $S\in\mc{H}_x^-$ and $C(x,e)\sq\mc{H}_x^+$) or we have $e\sq S$ (in which case, denoting by $x'$ the vertex of $e$ other than $x$, we have $S\in\mc{H}_{x'}^+$ and $C(x,e)\sq\mc{H}_{x'}^-$). Either way Definition~\ref{defn:dividing_pattern}(4) is satisfied, proving that $Q=UC_n(\G)$ is divisible.

Note that the specific integer $n$ does not play any explicit role in the definition of the above dividing pattern for $UC_n(\G)$. This will have interesting consequences in Example~\ref{ex:explicit}(3) below.
\end{ex}

Next, we observe that strongly divisible and divisible cube complexes are indeed different classes, and compare them to possible weakenings of their definition.

\begin{figure}[ht]
\begin{tikzpicture}[baseline=(current bounding box.center)]
\draw[fill] (-1,0) circle [radius=0.07cm];
\draw[fill] (0,1) circle [radius=0.07cm];
\draw[fill] (1,0) circle [radius=0.07cm];
\draw[fill] (0,-1) circle [radius=0.07cm];
\draw[fill,thick] (-1,0) -- (0,1);
\draw[fill,thick] (0,1) -- (1,0);
\draw[fill,thick] (1,0) -- (0,-1);
\draw[fill,thick] (0,-1) -- (-1,0);
\path[fill=black,opacity=0.1] (-1,0) -- (0,1) -- (1,0) -- (0,-1) -- cycle;
\node [above] at (-1,0) {$x$};
\filldraw[black,opacity=0.1,draw=black] (0,-1) plot [smooth] coordinates {(0,-1) (-1,-0.5) (-1.25,0) (-1,0.5) (0,1)} --  (-2,1) -- (-2,-1) -- (0,-1);
\draw[thick] plot [smooth] coordinates {(0,-1) (-1,-0.5) (-1.25,0) (-1,0.5) (0,1)};
\draw[fill,thick] (0,-1) -- (-2,-1);
\draw[fill,thick] (-2,-1) -- (-2,1);
\draw[fill,thick] (0,1) -- (-2,1);
\draw[fill] (-2,-1) circle [radius=0.07cm];
\draw[fill] (-2,1) circle [radius=0.07cm];
\node [above] at (-1,1) {$e$};
\node [left] at (-2,0) {$e'$};
\filldraw[black,opacity=0.1,draw=black] (-1,0) plot [smooth] coordinates {(-1,0) (-0.88,-0.35) (-0.5,-1) (-0.25,-1.2) (0,-1.25) (0.5,-1) (1,0)} --  (1,-2) -- (-1,-2) -- (-1,0);
\draw[thick] plot [smooth] coordinates {(-1,0) (-0.88,-0.35)};
\draw[dashed,thick] plot [smooth] coordinates {(-0.88,-0.35) (-0.5,-1) (-0.25,-1.2)};
\draw[thick] plot [smooth] coordinates {(-0.25,-1.2) (0,-1.25) (0.5,-1) (1,0)};
\draw[fill,thick] (1,-2) -- (1,0);
\draw[fill,thick] (-1,-2) -- (1,-2);
\draw[fill,thick] (-1,-2) -- (-1,-1.25);
\draw[dashed,thick] (-1,-1.25) -- (-1,-0.35);
\draw[fill,thick] (-1,-0.35) -- (-1,0);
\draw[fill] (1,-2) circle [radius=0.07cm];
\draw[fill] (-1,-2) circle [radius=0.07cm];
\node [right] at (1,-1) {$f$};
\node [below] at (0,-2) {$f'$};
\end{tikzpicture}
\hspace{1.5cm}
\begin{tikzpicture}[baseline=(current bounding box.center)]
\draw[fill] (-1,0) circle [radius=0.07cm];
\draw[fill] (0,1) circle [radius=0.07cm];
\draw[fill] (1,0) circle [radius=0.07cm];
\draw[fill] (0,-1) circle [radius=0.07cm];
\draw[fill,thick] (-1,0) -- (0,1);
\draw[fill,thick] (0,1) -- (1,0);
\draw[fill,thick] (1,0) -- (0,-1);
\draw[fill,thick] (0,-1) -- (-1,0);
\path[fill=black,opacity=0.1] (-1,0) -- (0,1) -- (1,0) -- (0,-1) -- cycle;
\node [above] at (-1,0) {$x$};
\filldraw[black,opacity=0.1,draw=black] (0,-1) plot [smooth] coordinates {(0,-1) (-1,-0.5) (-1.25,0) (-1,0.5) (0,1)} --  (-2,1) -- (-2,-1) -- (0,-1);
\draw[thick] plot [smooth] coordinates {(0,-1) (-1,-0.5) (-1.25,0) (-1,0.5) (0,1)};
\draw[fill,thick] (0,-1) -- (-2,-1);
\draw[fill,thick] (-2,-1) -- (-2,1);
\draw[fill,thick] (0,1) -- (-2,1);
\draw[fill] (-2,-1) circle [radius=0.07cm];
\draw[fill] (-2,1) circle [radius=0.07cm];
\node [above] at (-1,1) {$e$};
\node [below] at (-1,-1) {\textcolor{white}{$e$}};
\end{tikzpicture}
\hspace{1.5cm}
\begin{tikzpicture}[baseline=(current bounding box.center)]
\draw[fill,thick,decoration={markings, mark=at position 0.5 with {\arrow{>>}}},
        postaction={decorate}] (-2,0) -- (0,0);
        \draw[fill,thick,decoration={markings, mark=at position 0.5 with {\arrow{>>>}}},
        postaction={decorate}] (0,0) -- (2,0);
\draw[fill,thick,decoration={markings, mark=at position 0.5 with {\arrow{>>>}}},
        postaction={decorate}] (-2,2) -- (0,2);
        \draw[fill,thick,decoration={markings, mark=at position 0.5 with {\arrow{>>}}},
        postaction={decorate}] (0,2) -- (2,2);
\draw[fill,thick] (0,0) -- (0,2);
\draw[fill,thick,decoration={markings, mark=at position 0.5 with {\arrow{>}}},
        postaction={decorate}] (-2,0) -- (-2,2);
\draw[fill,thick,decoration={markings, mark=at position 0.5 with {\arrow{>}}},
        postaction={decorate}] (2,0) -- (2,2);
\draw[fill] (-2,0) circle [radius=0.07cm];
\draw[fill] (0,2) circle [radius=0.07cm];
\draw[fill] (2,0) circle [radius=0.07cm];
\draw[fill,thick,white] (-2,2) circle [radius=0.07cm];
\draw[thick] (-2,2) circle [radius=0.07cm];
\draw[fill,thick,white] (0,0) circle [radius=0.07cm];
\draw[thick] (0,0) circle [radius=0.07cm];
\draw[fill,thick,white] (2,2) circle [radius=0.07cm];
\draw[thick] (2,2) circle [radius=0.07cm];
\path[fill,thick=black,opacity=0.1] (-2,0) -- (-2,2) -- (2,2) -- (2,0) -- cycle;
\node [above] at (0,2) {$x$};
\node [below] at (0,0) {$y$};
\end{tikzpicture}
\caption{Assorted counterexamples.}
\label{fig:non-examples}
\end{figure}

\begin{ex}\label{ex:non-examples}
The various properties of cube complexes considered in this subsection are related by the following straightforward implications:
\[ \text{strongly divisible}\ \ \Ra\ \ \text{divisible}\ \ \Ra\ \ \text{special + Definition~\ref{defn:strongly_divisible}(1)} \ \Ra\ \ \text{special}. \]
The examples in Figure~\ref{fig:non-examples} show that none of these arrows are reversible.

On the left: a divisible cube complex that is not strongly divisible. It consists of three squares sharing vertices as depicted. Let $(\kappa,\G)$ be the (minimal) special colouring in which the hyperplanes dual to the edges $e$ and $f$ form one fibre of $\kappa$, and the hyperplanes dual to $e'$ and $f'$ form another one. The graph $\G$ has $4$ vertices and $2$ disjoint edges. It is straightforward to construct a dividing pattern for the cube complex with respect to $(\kappa,\G)$. On the other hand, the cube complex is not strongly divisible: there is no way of separating the vertex $x$ from the carrier of the hyperplane dual to $e$ via a collection of pairwise-disjoint hyperplanes.

In the middle: a special cube complex that satisfies condition~(1) in Definition~\ref{defn:strongly_divisible} but fails to be divisible. Indeed, the vertex $x$ cannot be separated from the carrier of hyperplane dual to the edge $e$ using pairwise-disjoint hyperplanes, much as in the previous example. Here, on the other hand, there are no special colourings in which the hyperplane dual to $e$ is not alone in its fibre of $\kappa$.

On the right: a special cube complex whose two vertices $x,y$ cannot be separated by pairwise-disjoint hyperplanes. Indeed, there are only two hyperplanes and they are transverse. Note that this cube complex is a double cover of the standard $2$--torus, so this also shows that divisibility is not preserved by passing to finite covers. This example was suggested to me by Sam Shepherd.
\end{ex}

Finally, we conclude this subsection with a few important definitions.

We say that partitions $\mc{H}_i$ and $\mc{H}_j$ are \emph{transverse} if $\mc{H}_i^{\eps_i}\cap\mc{H}_j^{\eps_j}\neq\emptyset$ for all four possible choices of $\eps_i,\eps_j\in\{\pm\}$. Note that it is possible for $\mc{H}_i$ and $\mc{H}_j$ to be transverse even if $\mc{H}_i^0$ and $\mc{H}_j^0$ are disjoint.

Similarly, in the presence of a special colouring $(\kappa,\G)$, we say that $\mc{H}_i$ and a colour $v\in\G$ are \emph{transverse} if each of the generalised half-carriers $C(v^-)$ and $C(v^+)$ intersects both $\mc{H}_i^-$ and $\mc{H}_i^+$. This can occur in two ways: either a hyperplane in $\kappa^{-1}(v)$ crosses a hyperplane in $\mc{H}_i^0$, or there exist two hyperplanes $H,H'\in\kappa^{-1}(v)$ with $C(H)\sq\mc{H}_i^+$ and $C(H')\sq\mc{H}_i^-$.

\begin{defn}[Extended crossing graph]\label{defn:extended_crossing_graph}
Consider a dividing pattern formed by partitions $\mc{H}_1,\dots,\mc{H}_n$ and a special colouring $(\kappa,\G)$. The \emph{extended crossing graph} of the dividing pattern is the smallest graph $\wh\G$ with vertex set $\wh\G^{(0)}=\G^{(0)}\sqcup\{\mc{H}_1,\dots,\mc{H}_n\}$ and edges ensuring that $\G\sq\wh\G$ is a full subgraph, as well as connecting pairs $[\mc{H}_i,\mc{H}_j]$ and $[\mc{H}_i,v]$ that are transverse (in the sense defined above).
\end{defn}

\subsection{The host}\label{subsec:host}

By definition, a special cube complex $Q$ admits a locally isometric immersion into the Salvetti complex of a RAAG. Under the stronger assumption that $Q$ is divisible, we can promote this to a locally convex \emph{embedding} of $Q$ into a nice cube complex $M$, whose fundamental group is still a RAAG and which we term a ``host''. This subsection is devoted to the construction of hosts, which is canonical once a dividing pattern is fixed. The entire procedure draws much of its inspiration from the construction of Salvetti blowups by Charney, Stambaugh and Vogtmann in \cite{CSV}.

Let $Q$ be a divisible compact special cube complex. Consider a dividing pattern formed by partitions $\mc{H}_1,\dots,\mc{H}_n$ and a minimal special colouring $(\kappa,\G)$. Let $\wh\G$ be the extended crossing graph of this dividing pattern. We also think of the colouring $(\kappa,\G)$ as a labelling of edge germs in $Q$ by elements of $\G^{\pm}$, as discussed in Subsection~\ref{subsec:special}.

Throughout the coming discussion, it can be helpful to keep in mind the case when $Q$ is strongly divisible, $\G$ is simply the crossing graph $\G_Q$ of the hyperplanes of $Q$, and $\kappa$ is a bijection.

\subsubsection{The finite $\CAT$ cube complex $\E$}

The set $\{\mc{H}_i^{\pm}\mid 1\leq i\leq n\}$ naturally becomes a pocset when (partially) ordered by inclusion and equipped with the involution $\mc{H}_i^+\leftrightarrow\mc{H}_i^-$. Let $\E$ be the finite CAT(0) cube complex\footnote{This notation is chosen by analogy with the cube complex $\E_{\mathbf{\Pi}}$ appearing in \cite[Section~3.3]{CSV}.} dual to this pocset\footnote{Alternatively, we could simply consider the pocset that artificially declares the partitions $\mc{H}_i$ to be pairwise transverse. In other words, we could simply define $\E$ to be an entire $n$--cube (which results in some additional vertices, compared to our chosen definition). The entire construction in Subsections~\ref{subsec:host} and~\ref{subsec:extended_host} works in this case as well, without requiring any significant changes (except that the $\mc{H}_i$ should span an $n$--clique in $\wh\G$). As a result, one obtains a larger host $M'$ having the host $M$ constructed below as a (non-locally-convex) subcomplex and sharing all the same important properties. We prefer to define $\E$ as above since it seems more natural and yields a smaller host.}. We think of each edge of $\E$ as labelled by the vertex of $\wh\G$ corresponding to the $\mc{H}_i$ that it crosses. In addition, we orient each edge from the $\mc{H}_i^-$--side to the $\mc{H}_i^+$--side.

\subsubsection{The 1--skeleton of the host}

We now define an intermediate cube complex $M_1$ by adding edges to $\E$ as follows. Each edge $f$ added at this stage will be labelled by some colour $v\in\G$. In addition, the two germs of the edge $f$ at its endpoints will be labelled by the two oriented colours $v^-$ and $v^+$.

Consider a vertex $x\in\E^{(0)}$ and some colour $v\in\G$. Let $\eps_i\in\{\pm\}$ be the signs such that $\{x\}=\mc{H}_1^{\eps_1}\cap\dots\cap\mc{H}_n^{\eps_n}$, which exist by the construction of $\E$. 

If the generalised half-carrier $C(v^-)\sq Q$ is disjoint from some $\mc{H}_i^{\eps_i}\sq Q$, then there will be no edge germ labelled $v^+$ stemming from $x$.

If instead $C(v^-)\cap\mc{H}_i^{\eps_i}\neq\emptyset$ for all $i$, we define signs $\eps_1',\dots,\eps_n'$ so that $\eps_i'=-\eps_i$ exactly for those indices $i$ for which the hyperplanes of $\kappa^{-1}(v)$ appear within $\mc{H}_i^0$ (recall that each $\mc{H}_i^0$ contains either all or none of the hyperplanes in $\kappa^{-1}(v)$, due to the second sentence in Definition~\ref{defn:dividing_pattern}(1)). By Lemma~\ref{lem:WD1} immediately below, the subsets $\mc{H}_i^{\eps_i'}\sq Q$ pairwise intersect, so there exists a (unique) vertex $y\in\E^{(0)}$ such that $y\in\mc{H}_1^{\eps_1'}\cap\dots\cap\mc{H}_n^{\eps_n'}$. Now, we add to $\E$ an edge labelled by $v$ connecting $x$ to $y$, with the germ at $x$ labelled by $v^+$ and the germ at $y$ labelled by $v^-$.

We repeat this procedure at $x$ with the roles of $v^-$ and $v^+$ swapped. Then we repeat the entire procedure for every vertex of $\E$ and every colour $v\in\G$. 

We stress that we do not intend to create any multiple edges through these repetitions. Each element of $\G^{\pm}$ should label at most one edge germ at each vertex of $\E$. Remark~\ref{rmk:no_issues} below guarantees that there are no issues in this regard.

\begin{lem}\label{lem:WD1}
The subsets $\mc{H}_i^{\eps_i'}\sq Q$ pairwise intersect. Moreover, $C(v^+)\cap\mc{H}_i^{\eps_i'}\neq\emptyset$ for each $i$.
\end{lem}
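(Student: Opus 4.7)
The proof is a careful case analysis driven by a dichotomy built into Definition~\ref{defn:dividing_pattern}(1): for each $i$, the set $\mc{H}_i^0$ contains either all of $\kappa^{-1}(v)$ or none of it. In the first case $\eps_i'=-\eps_i$ by construction, and in the second case $\eps_i'=\eps_i$. My plan is to establish the claim about $C(v^+)$ first, as it essentially supplies the pairwise intersection claim as a corollary.

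For the claim about $C(v^+)$, fix $i$ and split into the two cases. If $\kappa^{-1}(v)\sq\mc{H}_i^0$ (so $\eps_i'=-\eps_i$), then Definition~\ref{defn:dividing_pattern}(3) guarantees that $C(v^-)$ and $C(v^+)$ lie on opposite sides of the partition $\mc{H}_i$. The hypothesis $C(v^-)\cap\mc{H}_i^{\eps_i}\neq\emptyset$ then forces $C(v^-)\sq\mc{H}_i^{\eps_i}$ and $C(v^+)\sq\mc{H}_i^{-\eps_i}=\mc{H}_i^{\eps_i'}$; non-emptiness of $C(v^+)$ follows from minimality of $\kappa$. If instead $\kappa^{-1}(v)\cap\mc{H}_i^0=\emptyset$ (so $\eps_i'=\eps_i$), I would pick a vertex $p\in C(v^-)\cap\mc{H}_i^{\eps_i}$, an oriented hyperplane $\ora{H}$ with $\kappa(\ora{H})=v^-$ and $p\in C(\ora{H})$, and let $q$ be the other endpoint of the edge at $p$ dual to $H$. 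Then $q\in C(\ola{H})\sq C(v^+)$, and since $H\notin\mc{H}_i^0$ the edge from $p$ to $q$ crosses no hyperplane of $\mc{H}_i^0$, so $q$ lies in the same connected component of $Q-\mc{H}_i^0$ as $p$, giving $q\in C(v^+)\cap\mc{H}_i^{\eps_i'}$.

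The pairwise intersection claim then reduces to a three-way case analysis on how many of $\eps_i,\eps_j$ get flipped. If neither is flipped, the original vertex $x$ already lies in $\mc{H}_i^{\eps_i'}\cap\mc{H}_j^{\eps_j'}$. If both are flipped, the previous paragraph yields $C(v^+)\sq\mc{H}_i^{\eps_i'}\cap\mc{H}_j^{\eps_j'}$. If exactly one (say $\eps_j$) is flipped, then the unflipped case applied to $i$ provides a point $q\in C(v^+)\cap\mc{H}_i^{\eps_i'}$, while the flipped case applied to $j$ gives $C(v^+)\sq\mc{H}_j^{\eps_j'}$, so $q$ lies in the desired intersection. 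The only subtle spot in the whole argument is the unflipped case above, where one must invoke Definition~\ref{defn:dividing_pattern}(1) to conclude from $\kappa^{-1}(v)\not\sq\mc{H}_i^0$ that in fact \emph{every} hyperplane of $\kappa^{-1}(v)$ lies outside $\mc{H}_i^0$, thereby validating the edge trick.
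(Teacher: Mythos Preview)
Your proof is correct and follows the same overall architecture as the paper's: first establish $C(v^+)\cap\mc{H}_i^{\eps_i'}\neq\emptyset$ via the flipped/unflipped dichotomy, then feed this into the three-way case split for pairwise intersections. The flipped case and the pairwise-intersection argument are essentially identical to the paper's (the paper collapses your last two cases into one by noting that whenever at least one sign flips, say $\eps_i'=-\eps_i$, we have $C(v^+)\sq\mc{H}_i^{\eps_i'}$ while $C(v^+)$ meets $\mc{H}_j^{\eps_j'}$ regardless).

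The one genuine difference is your treatment of the unflipped case. The paper argues via a trichotomy on how $\kappa^{-1}(v)$ sits relative to $\mc{H}_i$: either $C(v)\sq\mc{H}_i^{\eps_i}$, or some hyperplane in $\kappa^{-1}(v)$ crosses one in $\mc{H}_i^0$, or $\kappa^{-1}(v)$ has hyperplanes with carriers on both sides; in each case $C(v^+)$ visibly meets $\mc{H}_i^{\eps_i}$. Your ``edge trick'' is more direct: from a vertex $p\in C(v^-)\cap\mc{H}_i^{\eps_i}$ you cross the single edge dual to the relevant $H\in\kappa^{-1}(v)$, landing in $C(v^+)$ without leaving the component of $Q-\mc{H}_i^0$ because $H\notin\mc{H}_i^0$. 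This is a clean local argument that sidesteps the global case analysis; the paper's trichotomy, on the other hand, makes the possible configurations of $\kappa^{-1}(v)$ relative to $\mc{H}_i$ explicit, which is informative for the later transversality discussion (just before Definition~\ref{defn:extended_crossing_graph}). One small point you leave implicit: the existence of a \emph{vertex} $p$ in $C(v^-)\cap\mc{H}_i^{\eps_i}$ follows because $C(v^-)$ is a subcomplex and any open cube meeting $\mc{H}_i^{\eps_i}$ has all its vertices there.
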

\begin{proof}
If $\eps_i'=-\eps_i$, then $C(v^-)\sq\mc{H}_i^{\eps_i}$ and $C(v^+)\sq\mc{H}_i^{-\eps_i}$, in view of Definition~\ref{defn:dividing_pattern}(3). If instead $\eps_i'=\eps_i$, then there are three options: either $C(v)\sq\mc{H}_i^{\eps_i}$, or a hyperplane in $\kappa^{-1}(v)$ crosses one of the hyperplanes in $\mc{H}_i^0$, or finally $\kappa^{-1}(v)$ contains two hyperplanes $H,H'$ with $C(H)\sq\mc{H}_i^{\eps_i}$ and $C(H')\sq\mc{H}_i^{-\eps_i}$. In all three of these cases, $C(v^+)$ intersects $\mc{H}_i^{\eps_i}$.

From these observations, one immediately deduces that $C(v^+)\cap\mc{H}_i^{\eps_i'}\neq\emptyset$ for each $i$. Examining the three cases where $(\eps_i',\eps_j')\in\{(\eps_i,\eps_j),(-\eps_i,\eps_j),(-\eps_i,-\eps_j)\}$, one similarly sees that $\mc{H}_i^{\eps_i'}\cap\mc{H}_j^{\eps_j'}\neq\emptyset$ for all $i,j$. More precisely, if $(\eps_i',\eps_j')=(\eps_i,\eps_j)$, then $\mc{H}_i^{\eps_i'}\cap\mc{H}_j^{\eps_j'}=\mc{H}_i^{\eps_i}\cap\mc{H}_j^{\eps_j}$, which is nonempty since it contains the vertex $x$. In the remaining cases, we have $\eps_i'=-\eps_i$ and hence $C(v^+)\sq \mc{H}_i^{\eps_i'}$. As noted above, $C(v^+)$ intersects $\mc{H}_j^{\eps_j'}$ no matter what, so we again have $\mc{H}_i^{\eps_i'}\cap\mc{H}_j^{\eps_j'}\neq\emptyset$, completing the proof of the lemma.
\end{proof}

\begin{rmk}\label{rmk:no_issues}
The second part of Lemma~\ref{lem:WD1} implies that considering the pairs $(x,v^-)$ or $(y,v^+)$ actually results in the creation of the very same edge of $M_1$.
\end{rmk}

\subsubsection{Completing the host: higher-dimensional cubes}

Finally, starting from the cube complex $M_1$, which has $\wh\G$--labelled edges and $\wh\G^{\pm}$--labelled edge germs, we add squares and higher-dimensional cubes to ensure that it is non-positively curved, special, and that the crossing graph of its hyperplanes is exactly $\wh\G$. The result will be the host cube complex $M$.

To see that this is indeed possible, consider a vertex $x\in M_1^{(0)}=\E^{(0)}$ from which stem (among others) loops labelled by $a_1,\dots,a_k\in\G$, embedded edges labelled by $b_1,\dots,b_h\in\G$, and edges labelled by partitions $\mc{H}_{c_1},\dots,\mc{H}_{c_m}$. Suppose that all these labels span a clique in $\wh\G$. We would like to add a $(k+h+m)$--cube to $M_1$ spanned by these edges at $x$.

Let $\eps_i$ be the signs such that $\{x\}=\mc{H}_1^{\eps_1}\cap\dots\cap\mc{H}_n^{\eps_n}$. For each $1\leq j\leq h$, let $I_j\sq\{1,\dots,n\}$ be set of indices $i$ for which the hyperplanes in $\kappa^{-1}(b_j)$ appear in $\mc{H}_i^0$. Starting from $x$ and crossing the edge labelled $b_j$ has the effect of flipping exactly those signs $\eps_i$ with $i\in I_j$. Crossing an edge labelled by $\mc{H}_{c_j}$ flips only the sign $\eps_{c_j}$, while crossing one of the loops clearly does not affect signs at all. 

\begin{lem}\label{lem:WD2}
\begin{enumerate}
\setlength\itemsep{.25em}
\item[]
\item The sets $I_1,\dots,I_h,\{c_1\},\dots,\{c_m\}$ are pairwise disjoint. 
\item Starting from $x$, the signs indexed by the sets $I_1,\dots,I_h,\{c_1\},\dots,\{c_m\}$ can be flipped independently from each other, giving rise to $2^{h+m}$ distinct vertices of $M_1$ connected by an embedded copy of the $1$--skeleton of an $(h+m)$--cube with edges labelled by the $b_j$ and $\mc{H}_{c_j}$. 
\item At each of these $2^{h+m}$ vertices, there appear $k$ loops labelled by $a_1,\dots,a_k$. 
\end{enumerate}
\end{lem}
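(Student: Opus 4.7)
\emph{Part (1).} The plan is to exploit the fact that the labels $b_1, \dots, b_h, \mc{H}_{c_1}, \dots, \mc{H}_{c_m}$ span a clique in $\wh\G$. If $c_j \in I_\ell$, then $\kappa^{-1}(b_\ell) \sq \mc{H}_{c_j}^0$, so Definition~\ref{defn:dividing_pattern}(3) forces the half-carriers $C(b_\ell^\pm)$ to lie on opposite sides of $\mc{H}_{c_j}$; this contradicts transversality of $\mc{H}_{c_j}$ and $b_\ell$ in $\wh\G$ (Definition~\ref{defn:extended_crossing_graph}). If $I_j \cap I_\ell$ contains some index $i$ for distinct $j, \ell$, then $\kappa^{-1}(b_j) \cup \kappa^{-1}(b_\ell) \sq \mc{H}_i^0$, but $\mc{H}_i^0$ consists of pairwise-disjoint hyperplanes by Definition~\ref{defn:dividing_pattern}(1), while minimality of the special colouring (Definition~\ref{defn:special_colouring}(4)) forces a crossing between $\kappa^{-1}(b_j)$ and $\kappa^{-1}(b_\ell)$ since $b_j, b_\ell$ are adjacent in $\G$.

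\emph{Part (2).} For any subset $T$ of the blocks $I_1, \dots, I_h, \{c_1\}, \dots, \{c_m\}$, let $\delta_i = -\eps_i$ if $i$ lies in some block of $T$, and $\delta_i = \eps_i$ otherwise. I will show that $(\delta_i)$ defines a valid ultrafilter on the pocset $\{\mc{H}_i^\pm\}$ (equivalently $\mc{H}_i^{\delta_i} \cap \mc{H}_j^{\delta_j} \neq \emptyset$ for all $i, j$), which places a vertex $v_T \in \E^{(0)} \sq M_1^{(0)}$. The case $i, j \notin \bigcup T$ is witnessed by $x$; the case $i$ in some block of $T$ and $j \notin \bigcup T$ is witnessed by the opposite endpoint of the corresponding edge at $x$ (either $x_\ell$ or $y_t$, whose signs are determined by Lemma~\ref{lem:WD1}). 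When both $i, j$ lie in blocks of $T$, I will appeal to one of three sources: transversality of two distinct partition blocks in $\wh\G$; transversality of a colour block with a partition block; or, for two distinct colour blocks $I_{t_1}, I_{t_2} \sq T$, a crossing between $\kappa^{-1}(b_{t_1})$ and $\kappa^{-1}(b_{t_2})$ guaranteed by minimality, whose dual square yields witnesses in all four combinations $C(b_{t_1}^\pm) \cap C(b_{t_2}^\pm)$ and hence, via Definition~\ref{defn:dividing_pattern}(3), in the required $\mc{H}_i^{\delta_i} \cap \mc{H}_j^{\delta_j}$. Distinctness of the $2^{h+m}$ vertices follows from part~(1) together with $I_j \neq \emptyset$ for each embedded-edge label $b_j$. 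For the edges of the cube: partition-block edges are automatic from the dual-cube-complex structure of $\E$ once both endpoints are known to lie in $\E^{(0)}$, and colour-block edges in $M_1$ require verifying $C(b_t^{-\sigma_t}) \cap \mc{H}_i^{\zeta_i} \neq \emptyset$ at each intermediate vertex $v_{T'}$, which reduces to the same transversality/crossing arguments.

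\emph{Part (3).} Each loop label $a_j$ must satisfy $I_{a_j} = \emptyset$ (otherwise the edge it labels at $x$ would be embedded, not a loop). At a vertex $v_T$ with signs $\zeta$, the loop persists provided $C(a_j^\pm) \cap \mc{H}_i^{\zeta_i} \neq \emptyset$ for all $i$. When $i \notin \bigcup T$, we have $\zeta_i = \eps_i$ and the condition is inherited from the loop at $x$. When $i$ lies in a partition block $\{c_{\ell_1}\} \sq T$, transversality of $a_j$ and $\mc{H}_{c_{\ell_1}}$ in $\wh\G$ provides the required intersection. When $i \in I_{j_1} \sq T$, minimality furnishes a crossing between $\kappa^{-1}(a_j)$ and $\kappa^{-1}(b_{j_1})$; its dual square gives witnesses in all four combinations $C(a_j^\pm) \cap C(b_{j_1}^\pm)$, and Definition~\ref{defn:dividing_pattern}(3) translates these into witnesses in $C(a_j^\pm) \cap \mc{H}_i^{-\eps_i}$.

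The principal delicacy I anticipate lies in the sign-tracking in part~(2), and in particular in the two-colour-block subcase: one must carefully relate the germ orientation of the edge $e_t$ at $x$ (which constrains $\eps_i$ for $i \in I_t$ via the construction of $M_1$) to the splitting $C(b_t^+) \sq \mc{H}_i^{\alpha_i}$, $C(b_t^-) \sq \mc{H}_i^{-\alpha_i}$ prescribed by Definition~\ref{defn:dividing_pattern}(3), so as to confirm that the four witnesses produced by the square argument really land on the intended sides of both $\mc{H}_i$ and $\mc{H}_j$. Once this bookkeeping is set up, the rest of the proof is a systematic case enumeration.
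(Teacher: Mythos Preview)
Your proposal is correct and uses the same core ingredients as the paper (minimality of the colouring to produce crossings, Definition~\ref{defn:dividing_pattern}(3) to translate these into containments $C(v^{\pm})\sq\mc{H}_i^{\pm}$, and transversality in $\wh\G$ for the partition labels). The difference is organisational: the paper argues inductively --- flip a single block (passing from $x$ to a vertex $y$ obtained by flipping the signs in $I_1$, or to $z$ obtained by flipping $\eps_{c_1}$), check that all the required edge-germs persist there, then replace $x$ by $y$ or $z$ and repeat --- whereas you attack an arbitrary subset $T$ of blocks head-on and verify the ultrafilter condition and edge conditions by a case split on where $i$ and $j$ sit relative to $T$. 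The paper's inductive framing buys economy (one only ever checks one flip against the remaining data), while your direct approach makes the combinatorics of the $2^{h+m}$ vertices completely explicit at the cost of more cases.

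One minor omission: in your case split ``both $i,j$ lie in blocks of $T$'' you list three sub-cases (two partition blocks, one of each, two distinct colour blocks) but do not name the sub-case $i,j\in I_t$ for the \emph{same} colour block. This is harmless --- the opposite endpoint $x_t$ of the $b_t$--edge at $x$ (which exists by Lemma~\ref{lem:WD1}) has signs $-\eps_i,-\eps_j$ at both positions and witnesses the intersection --- but it should be mentioned for completeness.
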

\begin{proof}
If we had $I_s\cap I_r\neq\emptyset$ for $s\neq r$, there would exist an index $i$ such that the hyperplanes in $\kappa^{-1}(b_s)\cup\kappa^{-1}(b_r)$ all appear in $\mc{H}_i^0$. This would contradict the fact that at least one hyperplane in $\kappa^{-1}(b_s)$ crosses a hyperplane in $\kappa^{-1}(b_r)$, which is the case because $b_s,b_r$ are joined by an edge of $\G$ by assumption and the special colouring is minimal. Similarly, if we had $c_r\in I_s$, the hyperplanes in $\kappa^{-1}(b_s)$ would all appear in $\mc{H}_{c_r}^0$, again contradicting the fact that $b_s$ and $\mc{H}_{c_r}$ are transverse (as defined at the end of Subsection~\ref{subsec:divisible}). This proves part~(1).

Now, fix some $s\in I_1$. Observe that the sets $\mc{H}_s^{\pm\eps_s}$ intersect all $\mc{H}_j^{\pm\eps_j}$ with $j\in I_2\cup\dots\cup I_h$. Indeed, $\mc{H}_s^0$ contains all hyperplanes in $\kappa^{-1}(b_1)$, the latter contains hyperplanes crossing hyperplanes in each of the sets $\kappa^{-1}(b_2),\dots,\kappa^{-1}(b_h)$, and one of these sets is entirely contained in $\mc{H}_j^0$. Thus, a hyperplane contained in $\mc{H}_s^0$ crosses a hyperplane contained in $\mc{H}_j^0$. Similarly, the sets $\mc{H}_s^{\pm\eps_s}$ intersect all $\mc{H}_{c_j}^{\pm\eps_{c_j}}$ with $1\leq j\leq m$, simply because $b_1$ is transverse to all $\mc{H}_{c_j}$, by assumption. This guarantees that, starting from $x$, the signs indexed by the sets $I_1,\dots,I_h,\{c_1\},\dots,\{c_m\}$ can be flipped independently from each other, giving rise to $2^{h+m}$ well-defined vertices of $M_1$.

In order to complete the proof of the lemma, we are left to show that, at each of these $2^{h+m}$ vertices, we see edges of $M_1$ labelled by $b_1,\dots,b_h$ and $a_1,\dots,a_k$. In fact, it suffices to check this at the vertices $y,z\in M_1^{(0)}=\E^{(0)}$ obtained from $x$ by flipping, respectively, only the signs indexed by $I_1$ or only the sign $\eps_{c_1}$ (then we can replace $x$ by $y$ or $z$ and repeat). Without loss of generality, let $b_1^+,\dots,b_h^+$ be the oriented colours labelling the relevant germs of edges at $x$.

Regarding $y$, we need to show that, for every $s\in I_1$, the subset $\mc{H}_s^{-\eps_s}\sq Q$ intersects the generalised half-carriers $C(b_2^-),\dots,C(b_h^-)$ and $C(a_1^-),\dots,C(a_k^-)$. This is clear, since $C(b_1^+)\sq\mc{H}_s^{-\eps_s}$ and $\kappa^{-1}(b_1)$ contains hyperplanes crossing hyperplanes in each of the sets $\kappa^{-1}(b_2),\dots,\kappa^{-1}(b_h)$ and $\kappa^{-1}(a_1),\dots,\kappa^{-1}(a_k)$.

Regarding $z$, we need to show that the set $\mc{H}_{c_1}^{-\eps_{c_1}}$ intersects the half-carriers $C(b_1^-),\dots,C(b_h^-)$ and $C(a_1^-),\dots,C(a_k^-)$. Again, this is clear since each of the colours $b_1,\dots,b_h$ and $a_1,\dots,a_k$ is transverse to the partition $\mc{H}_{c_1}$ (recall the discussion right before Definition~\ref{defn:extended_crossing_graph}). 

This completes the proof of the lemma.
\end{proof}

To the graph provided by Lemma~\ref{lem:WD2} we can attach the required $(k+h+m)$--cube. Repeating this construction for every vertex of $\E$ and every available clique of $\wh\G$ completes the construction of the host cube complex $M$ (avoiding duplicates, so that all cells of dimension $\geq 2$ are uniquely determined by their $1$--skeleton). The next proposition records various properties of the host $M$.

\begin{prop}\label{prop:host_properties}
Let $Q$ be a compact special cube complex with a fixed dividing pattern. Let $(\kappa,\G)$ be the involved special colouring and $\wh\G$ the extended crossing graph.
\begin{enumerate}
\setlength\itemsep{.25em}
\item The host cube complex $M$ is special and the crossing graph of its hyperplanes is $\wh\G$.
\item The finite $\CAT$ cube complex $\E$ embeds in $M$ as a locally convex subcomplex. Every edge of $M-\E$ crosses a hyperplane corresponding to a vertex of the subgraph $\G\sq\wh\G$.
\item Collapsing the hyperplanes of $M$ labelled by $\wh\G-\G$ (i.e.\ those crossing $\E$) gives a homotopy equivalence from $M$ to the Salvetti complex of the RAAG $A_{\G}$. In particular, $\pi_1(M)\cong A_{\G}$.
\item There is a natural embedding $Q\hookrightarrow M$ as a locally convex subcomplex. Thus $\pi_1(Q)\hookrightarrow\pi_1(M)$.
\item If an edge labelled by a vertex of $\G\sq\wh\G$ stems from a vertex of $Q\sq M$, it is contained in $Q$.
\end{enumerate}
\end{prop}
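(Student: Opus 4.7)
The plan is to verify the five assertions in the order listed, leveraging the explicit combinatorics of the construction and the properties recorded in Lemmas~\ref{lem:WD1} and~\ref{lem:WD2}.

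For part~(1), I would first argue that hyperplanes of $M$ are in bijection with the labels from $\wh\G^{(0)}$: two edges share a dual hyperplane exactly when they carry the same label. This uses that edges of $\E$ dual to a given partition $\mc{H}_i$ are glued into squares with the other partition-labelled edges (since $\E$ is a $\CAT$ cube complex), that the edges added in constructing $M_1$ preserve label compatibility (by Lemma~\ref{lem:WD1} and Remark~\ref{rmk:no_issues}), and that cubes are attached exactly along cliques of $\wh\G$ via Lemma~\ref{lem:WD2}. Two-sidedness is built in through the orientation of germs by elements of $\G^\pm$ (for colour-labelled edges) and the pocset orientation of $\E$'s edges. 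Direct self-osculation is ruled out because oppositely-oriented germs at the same vertex would have to come from a fibre $\kappa^{-1}(v)$ with overlapping oriented half-carriers in $Q$, contradicting Definition~\ref{defn:special_colouring}(2), or from the pocset structure of $\E$, which is already $\CAT$. Inter-osculation is ruled out because whenever two edges at a vertex carry labels spanning an edge of $\wh\G$, Lemma~\ref{lem:WD2} guarantees a square, hence a cube of the appropriate dimension, is attached. Crossing is then exactly recorded by $\wh\G$.

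For part~(2), $\E\sq M_1\sq M$ by construction; local convexity reduces to checking that any square of $M$ with two sides in $\E$ lies in $\E$, which holds because $\E$-edges are labelled by the $\mc{H}_i$'s and $\E$ already contains every square with two such labels (being $\CAT$ dual to the pocset). Edges of $M-\E$ are labelled by vertices of $\G$ by construction of $M_1$. For part~(3), every hyperplane labelled by some $\mc{H}_i$ is a carrier retract in $M$: indeed, its positive and negative carriers correspond to the two sides of the pocset partition and are disjoint by construction. Collapsing them successively therefore yields homotopy equivalences, and after all collapses $\E$ is collapsed to a point and every vertex of $\E$ is identified; what remains is a cube complex with a single vertex, edges in bijection with $\G^{(0)}$, and an $n$-cube for every $n$-clique of $\G$, which is precisely the Salvetti complex $\mathbb{S}_\G$. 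This gives $\pi_1(M)\cong A_\G$.

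For part~(4), define $\iota\colon Q\to M$ on vertices by $\iota(x)=\bigcap_i \mc{H}_i^{\eps_i(x)}\in\E^{(0)}$ where $\eps_i(x)$ records on which side of $\mc{H}_i$ the vertex $x$ lies, and on an edge of $Q$ dual to some $H\in\kappa^{-1}(v)$ by sending it to the edge of $M_1$ added at $\iota(x)$ with label $v$; condition~(4) of the dividing pattern guarantees injectivity on vertices, and Definition~\ref{defn:dividing_pattern}(3) guarantees that the sign-flipping prescribed when adding the edge at $\iota(x)$ agrees with the sign change from $x$ to the other endpoint. Cubes extend this map consistently since a cube of $Q$ is determined by pairwise crossing hyperplanes, whose labels then span a clique in $\G\sq\wh\G$. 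For local convexity of $\iota(Q)$ in $M$, one checks that any square of $M$ with two adjacent sides in $\iota(Q)$ has label-pair in $\G\x\G$ (by part~(5), which I will establish in parallel), hence comes from two transverse hyperplanes of $Q$ at a vertex of $Q$, which must span a square in $Q$ by the non-inter-osculation condition for special colourings. Part~(5) itself follows from the edge-adding rule in the construction of $M_1$: an edge at $\iota(x)$ labelled $v\in\G$ exists exactly when every $\mc{H}_i^{\eps_i(x)}$ meets $C(v^\eta)$ for the relevant sign $\eta$, and by the defining property of the dividing pattern (condition~(4) of Definition~\ref{defn:dividing_pattern}) this forces some edge of $Q$ dual to a hyperplane in $\kappa^{-1}(v)$ to emanate from $x$, which is mapped precisely to the edge in question.

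The main obstacle I anticipate is part~(4): verifying that $\iota$ is a well-defined cubical embedding that is locally convex requires juggling the labelling, the sign bookkeeping inherited from the dividing pattern, and the non-self-osculation/non-inter-osculation conditions of the special colouring simultaneously; in particular, one must rule out that a square of $M$ glues two edges of $\iota(Q)$ to an edge outside $\iota(Q)$, which is precisely where condition~(5) and the ``all or none'' clause of Definition~\ref{defn:dividing_pattern}(1) are needed.
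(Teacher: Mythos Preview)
Your overall strategy matches the paper's, but there is a real gap in part~(1) that then resurfaces in part~(3). You assert that two edges of $M$ share a dual hyperplane exactly when they carry the same label, and for the $\G$--labelled edges you justify this by citing Lemma~\ref{lem:WD1}, Remark~\ref{rmk:no_issues}, and Lemma~\ref{lem:WD2}. None of these addresses the required point. Lemma~\ref{lem:WD1} and Remark~\ref{rmk:no_issues} only say that the edge-adding procedure is consistent (no duplicate germs at a vertex); Lemma~\ref{lem:WD2} only guarantees cubes at a \emph{single} vertex once a clique of labels is present there. What is missing is a connectivity argument: given two edges $f,f'$ with germs labelled $v^+$ at vertices $x,x'\in\E$, you must produce a chain of squares in $M$ from $f$ to $f'$. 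The paper does this explicitly (its Claim~1): along a geodesic in $\E$ from $x$ to $x'$, one checks that an edge with germ $v^+$ exists at every intermediate vertex (because $C(v^-)$ meets each relevant $\mc{H}_i^{\pm}$), and that consecutive such edges span a square (because any $\mc{H}_i$ whose sign flips along the geodesic must be transverse to $v$, hence adjacent in $\wh\G$). Without this, you have not shown that $M$ has only one hyperplane per colour $v\in\G$.

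The same issue undermines your part~(3). After collapsing the $\E$--hyperplanes you obtain a one-vertex complex, but you have not shown it has exactly one edge per $v\in\G$ and exactly one cube per clique of $\G$; a priori several $v$--labelled edges of $M$ could descend to distinct loops. The paper handles this with the same Claim~1 (for edges) and an analogous Claim~2 (for higher cubes): any two cells with identical germ-labels are connected by a chain of cells differing by $\E$--hyperplanes, so they get identified under the collapse. Your sketch needs this ingredient.
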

\begin{proof}
The fact that $M$ is special is immediate from its construction. Indeed, each of its edge germs is naturally labelled by an element of $\wh\G^{\pm}$, no two edge germs at a given vertex have the same label, and labels completely regulate whether two edge germs span a square germ or not.

In order to show that $\wh\G$ is the crossing graph of the hyperplanes of $M$, it suffices to show that all edges of $M$ labelled by a given colour $v\in\G\sq\wh\G$ are actually dual to the same hyperplane of $M$.

Let us prove a slightly stronger statement, whose full strength will be required later in the proof.

\smallskip
{\bf Claim~1.} \emph{Consider two vertices $x,x'\in M$ from which stem edges $f,f'$ with germs labelled by some $v^+\in\G^{\pm}$. Then, there exist edges $f_0=f,f_1,\dots,f_k=f'\sq M$ such that $f_i$ and $f_{i+1}$ are opposite in a square of $M$ whose other pair of opposite edges is contained in the subcomplex $\E\sq M$.}

\smallskip
\emph{Proof of Claim~1.}
Say that $\{x\}=\mc{H}_1^{\eps_1}\cap\dots\cap\mc{H}_n^{\eps_n}$ and $\{x'\}=\mc{H}_1^{\eps_1'}\cap\dots\cap\mc{H}_n^{\eps_n'}$ for signs $\eps_i,\eps_i'\in\{\pm\}$. Let $x_0=x,x_1,\dots,x_k=x'$ be the vertices of a geodesic in $\E$ from $x$ to $x'$. Recall that $\E$ is $\CAT$ and the $\mc{H}_i$ are its hyperplanes. Thus, if we have $\eps_i=\eps_i'$ for some $i$, then the vertices $x_1,\dots,x_{k-1}$ all lie in $\mc{H}_i^{\eps_i}$. It follows that the half-carrier $C(v^-)\sq Q$ intersects each set $\mc{H}_i^{\pm}\sq Q$ containing one of the $x_j\in\E$. Hence there is an edge $f_i$ with germ labelled $v^+$ stemming from each $x_i$.

If $\eps_i'=-\eps_i$, then $C(v^-)$ intersects both $\mc{H}_i^{\eps_i}$ and $\mc{H}_i^{-\eps_i}$, which implies that the colour $v$ must be transverse to $\mc{H}_i$. In other words, $v$ and $\mc{H}_i$ are connected by an edge of $\wh\G$, which implies that edges $f_j$ with consecutive indices span a square of $M$ whose remaining edges lie in $\E$, as required.
\hfill$\blacksquare$

\smallskip
This completes the proof of part~(1). Part~(2) is also immediate from the construction of $M$.

Let us prove part~(3). The hyperplanes labelled by $\wh\G-\G$ (i.e.\ those that cross edges of $\E$) are all carrier retracts. Moreover, any loop crossing only such hyperplanes is entirely contained in the $\CAT$ subcomplex $\E$ and thus nulhomotopic. It follows that we can collapse all these hyperplanes and obtain a homotopy equivalence $M\twoheadrightarrow S$, where $S$ is a non-positively curved cube complex whose hyperplane set is naturally in bijection with $\G^{(0)}$ (see e.g.\ Lemmas~4.4 and~4.5 in \cite{CSV} for this). Since $M^{(0)}=\E^{(0)}$, the cube complex $S$ has a single vertex. 

In order to conclude that $S$ is the required Salvetti complex, we need to check that it has only one edge dual to each hyperplane, and only one $k$--cube for each $k$--clique in $\G$. The former follows from Claim~1. The latter is a consequence of the following claim, of which we do not give a proof to avoid cumbersome notation; the argument is entirely analogous to the one used for Claim~1. We again state a more general claim than is strictly necessary, since this will be needed in Lemma~\ref{lem:extended_host_collapses} below.

\smallskip
{\bf Claim~2.} \emph{Consider two $m$--cells $c,c'\sq M$ based at vertices $x,x'\in M$ and spanned by edge germs with the same labels (possibly coming from both $\G^{\pm}$ and $\wh\G^{\pm}-\G^{\pm}$). Then there exist a geodesic in $\E$ with vertices $x_0=x,x_1,\dots,x_k=x'$ and $m$--cells $c_i$ based at $x_i$, all with the same edge germ labels, and such that consecutive $c_i$'s lie in an $(m+1)$--cell whose additional hyperplane is a vertex of $\wh\G-\G$.}

\smallskip
Now, we prove part~(4). First, note that there is a natural embedding $j\colon Q^{(0)}\hookrightarrow M^{(0)}$ defined as follows. For every vertex $x\in Q^{(0)}$ and every $1\leq i\leq n$, there is a unique sign $\eps_i(x)\in\{\pm\}$ such that $x\in\mc{H}_i^{\eps_i(x)}$. The subsets $\mc{H}_1^{\eps_1(x)},\dots,\mc{H}_n^{\eps_n(x)}\sq Q$ pairwise intersect (as they contain $x$), so they define a (unique) vertex of $\E^{(0)}$. We define $j(x)$ as this vertex. The resulting map $j$ is injective since distinct vertices of $Q$ are separated by at least one of the partitions $\mc{H}_i$, by definition of dividing pattern.

If $x,y\in Q^{(0)}$ are joined by an edge dual to a hyperplane $H$ with $\kappa(H)=v$, it is clear from the construction of $M$ that $j(x)$ and $j(y)$ are connected by a (unique) edge of $M$ labelled by $v$. Thus, we obtain a canonical embedding $Q^{(1)}\hookrightarrow M^{(1)}$ (for injectivity, recall that the oriented hyperplanes in each $\kappa^{-1}(v^{\pm})$ have pairwise-disjoint positive carriers in $Q$). Since $M$ is special, $\G\sq\wh\G$ is a full subgraph and $(\kappa,\G)$ is a special colouring for $Q$, it follows that this embedding of $1$--skeletons uniquely extends to an embedding $Q\hookrightarrow M$ with locally convex image. 

Locally convex subcomplexes of non-positively curved cube complexes are $\pi_1$--injective, which also shows that $\pi_1(Q)\hookrightarrow\pi_1(M)$, completing the proof of part~(4).

Finally, we prove part~(5). Consider an edge $f\sq M$ stemming from a vertex $x\in Q\sq M$ and suppose that the germ at $x$ is labelled by some oriented colour $v^+$ (without loss of generality). This means that $C(v^-)\cap\mc{H}_i^{\eps_i}\neq\emptyset$ for all $i$, where $\{x\}=\mc{H}_1^{\eps_1}\cap\dots\cap\mc{H}_n^{\eps_n}$. In view of the second half of part~(4) of Definition~\ref{defn:dividing_pattern}, it follows that, within $Q$, we must have $x\in C(v^-)$. Thus, $Q$ already contains an edge with germ at $x$ labelled $v^+$, and this edge must then coincide with $f$. In conclusion, we have $f\sq Q$ as required.
\end{proof}

\begin{rmk}\label{rmk:why_important}
Up until part~(5) of Proposition~\ref{prop:host_properties}, we did not use much of the definition of ``divisible'' cube complexes. Everything would have worked for any compact special cube complex $Q$ satisfying part~(1) of Definition~\ref{defn:strongly_divisible}, selecting a few collections of pairwise-disjoint hyperplanes $\mc{H}_i$ separating any two vertices of $Q$ and running the above construction.

The importance of part~(5) of Proposition~\ref{prop:host_properties} (and hence of the rest of the Definitions~\ref{defn:strongly_divisible} and~\ref{defn:dividing_pattern}) will become clear in the next subsection.
\end{rmk}

\subsection{The extended host and its automorphisms}\label{subsec:extended_host}

In the previous subsection, we have embedded the divisible cube complex $Q$ into its host $M$. Now, we will duplicate some of the edges of $M$ to form an ``extended host'' $\wh M$. This will have similar properties to $M$ and will additionally admit a cubical automorphism whose fixed subset has $Q$ as one of its connected components. 

Fix an integer $N\geq 2$. Define a cube complex $\wh M_N$ as follows. Informally, $\wh M_N$ is obtained from $M$ by replacing every edge of $\E\sq M$ with a $\Theta$--graph with $N$ strands.

More precisely, $M$ and $\wh M_N$ have the same $0$--skeleton and exactly the same edges labelled by vertices of $\G\sq\wh\G$. Every edge $f\sq M$ labelled by a vertex of $\wh\G-\G$ is replaced by $N$ distinct copies of itself, denoted $f^1,\dots,f^N\sq\wh M_N$. If $M$ contains a cube $c$ spanned by $\G$--labelled edges $u_1,\dots,u_m$ and $(\wh\G-\G)$--labelled edges $f_1,\dots,f_k$ stemming from a vertex $x$, then $\wh M_N$ has in its place $N^k$ cubes of the same dimension, each spanned by the edges $u_1,\dots,u_m,f_1^{i_1},\dots,f_k^{i_k}$ for one of the possible choices of $1\leq i_1,\dots,i_k\leq N$. Moreover, this construction is performed ensuring that, if $g_1$ and $g_2$ are opposite $(\wh\G-\G)$--labelled edges in a square of $M$, then $g_1^i$ and $g_2^i$ are opposite edges in squares of $\wh M_N$ for each $1\leq i\leq N$, and we never have some $g_1^i$ opposite to some $g_2^j$ with $i\neq j$.

It is straightforward to see that $\wh M_N$ is non-positively curved and special. The crossing graph of its hyperplanes is the graph $\wh\G_N$ having $\G$ as a full subgraph and additional vertices $\mc{H}_i^s$ for $1\leq i\leq n$ and $1\leq s\leq N$, with the following edges:
\begin{itemize}
\item $[v,\mc{H}_i^s]$ for $v\in\G$ if and only if $[v,\mc{H}_i]\sq\wh\G$;
\item $[\mc{H}_i^s,\mc{H}_{i'}^{s'}]$ if and only if $i\neq i'$ and $[\mc{H}_i,\mc{H}_{i'}]\sq\wh\G$.
\end{itemize}

Much like $M$, the complex $\wh M_N$ naturally collapses to a Salvetti complex.

\begin{lem}\label{lem:extended_host_collapses}
Collapsing the hyperplanes of $\wh M_N$ corresponding to $\mc{H}_1^1,\dots,\mc{H}_n^1\in\wh\G_N$ gives a homotopy equivalence from $\wh M_N$ to the Salvetti complex of the RAAG $A_{\wh\G_{N-1}}$. Thus, $\pi_1(\wh M_N)\cong A_{\wh\G_{N-1}}$.
\end{lem}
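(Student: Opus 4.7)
The plan is to mirror the proof of Proposition~\ref{prop:host_properties}(3), applied to $\wh M_N$ with the distinguished family of hyperplanes $\mc{H}_1^1,\dots,\mc{H}_n^1$ playing the role that $\mc{H}_1,\dots,\mc{H}_n$ played for $M$. Let $\E_1\sq\wh M_N$ denote the subcomplex spanned by all ``first-copy'' edges $f^1$ (for $f$ an edge of $\E\sq M$); by construction $\E_1$ is isomorphic to $\E$, hence $\CAT$, and its hyperplanes are precisely $\mc{H}_1^1,\dots,\mc{H}_n^1$.

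First I would verify that each $\mc{H}_i^1$ is a carrier retract in $\wh M_N$: the carrier of $\mc{H}_i^1$ is formed by those cubes of $\wh M_N$ whose $\mc{H}_i$--direction is pinned to the first copy, so $C(\mc{H}_i^1)\cong \mc{H}_i^1\x[0,1]$. Moreover, any loop in $\wh M_N$ crossing only the $\mc{H}_j^1$'s lies in $\E_1$ and is therefore nulhomotopic. As in Proposition~\ref{prop:host_properties}(3), this allows one to collapse $\mc{H}_1^1,\dots,\mc{H}_n^1$ via \cite[Lemmas~4.4 and~4.5]{CSV}, yielding a homotopy equivalence from $\wh M_N$ onto a non-positively curved cube complex $S$ whose hyperplanes are naturally in bijection with $\G^{(0)}\sqcup\{\mc{H}_i^s:1\leq i\leq n,\ 2\leq s\leq N\}$. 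After relabelling $s\mapsto s-1$, their crossing pattern inherited from $\wh\G_N$ matches the defining relations of $\wh\G_{N-1}$.

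Next I would identify $S$ with the Salvetti complex $\mbb{S}_{\wh\G_{N-1}}$. Since $\E_1$ is connected and gets entirely collapsed to a point, $S$ has a single vertex. That $S$ carries exactly one edge per remaining hyperplane, and exactly one $k$--cube per $k$--clique of $\wh\G_{N-1}$, should follow from direct analogues of Claims~1 and~2 in the proof of Proposition~\ref{prop:host_properties}, with $\E_1$ playing the role of $\E$: any two parallel cells of $\wh M_N$ can be connected by a sequence of sliding moves through cubes whose additional hyperplanes are $\mc{H}_j^1$'s. The main point to check, which is where I expect essentially all of the work, is that these sliding moves remain available within $\wh M_N$. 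This should reduce to the observation that the relevant transversality conditions in $\wh\G_N$ (namely $[\mc{H}_i^s,\mc{H}_j^1]\sq\wh\G_N$ iff $i\neq j$ and $[\mc{H}_i,\mc{H}_j]\sq\wh\G$, and similarly for colours $v\in\G$) reproduce precisely the conditions used for the corresponding sliding moves in $M$, together with the fact that different copies $\mc{H}_i^s$ and $\mc{H}_i^{s'}$ of the same partition never cross (so no spurious squares are introduced by the duplication). Once this is in place, $S\cong\mbb{S}_{\wh\G_{N-1}}$ and the claim $\pi_1(\wh M_N)\cong A_{\wh\G_{N-1}}$ follows immediately.
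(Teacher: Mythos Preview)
Your proposal is correct and follows essentially the same approach as the paper. The only minor difference is one of packaging: rather than reproving analogues of Claims~1 and~2 directly for $\wh M_N$ with $\E_1$ in place of $\E$, the paper simply invokes the existing Claim~2 about cells in $M$ and then observes that the resulting sliding paths lift verbatim to $\wh M_N$ by always choosing the first-copy edges (so the intermediate hyperplanes are all of the form $\mc{H}_j^1$). This saves you from redoing the transversality check inside $\wh M_N$, but the underlying argument is identical to what you outline.
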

\begin{proof}
The proof is exactly the same as that of part~(3) of Proposition~\ref{prop:host_properties}, using Claim~2 instead of Claim~1. Namely, if $c,c'\sq M$ are two $m$--cells based at vertices $x,x'\in M$ and spanned by edge germs with the same labels, then there is a sequence of cells $c_i\sq M$ starting with $c$, ending with $c'$, and with consecutive cells spanning an $(m+1)$--cell whose additional hyperplane is a vertex of $\wh\G-\G$. These paths of cells in $M$ yield analogous paths of cells in $\wh M_N$, where we can take the hyperplanes separating consecutive $c_i$'s to be of the form $\mc{H}^1_j$, so that they get collapsed. Ultimately, one obtains that the collapse of $\wh M_N$ has a unique cell for each clique in $\wh\G_N-\{\mc{H}_1^1,\dots,\mc{H}_n^1\}$. The full subgraph of $\wh\G_N$ avoiding $\{\mc{H}_1^1,\dots,\mc{H}_n^1\}$ is isomorphic to $\wh\G_{N-1}$, so this proves the lemma.
\end{proof}

Now, there is a natural order--$N$ cubical automorphism $\Phi\colon\wh M_N\ra\wh M_N$ that fixes pointwise the $0$--skeleton and every $\G$--labelled edge, while it cyclically permutes the $N$ edges originating from any given $(\wh\G-\G)$--labelled edge of $M$. To fix the definition of $\Phi$, let us say that $\Phi(f^i)=f^{i+1}$ for each such edge $f\sq M$, with indices considered modulo $N$. By part~(5) of Proposition~\ref{prop:host_properties}, the subcomplex $Q\sq \wh M_N$ is an entire connected component of $\Fix(\Phi)\sq\wh M_N$ (cf.\ Remark~\ref{rmk:why_important}).

From this, we immediately deduce the following result, proving one half of Theorem~\ref{thmintro:divisible_vs_fix}.

\begin{thm}\label{thm:divisible->fix}
Let $G$ be the fundamental group of a divisible compact special cube complex. Let $\wh\G$ be the extended crossing graph of a dividing pattern. Fix some $N\geq 2$ and set $\Lambda:=\wh\G_{N-1}$. Then the RAAG $A_{\Lambda}$ admits an order--$N$ untwisted automorphism $\varphi\in U(A_{\Lambda})\leq\Aut(A_{\Lambda})$ such that $\Fix(\varphi)\cong G$. For $N=2$, we have $\varphi\in U^0(A_{\wh\G})$; in particular, $\varphi$ is pure.
\end{thm}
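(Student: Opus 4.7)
The plan is to apply Lemma~\ref{lem:pi1_vs_fix}(2) to the cubical action $\langle\Phi\rangle\acts\wh M_N$ constructed just above. Pick a basepoint $x_0\in Q^{(0)}$; since $\Phi$ fixes the entire $0$--skeleton of $\wh M_N$, the finite cyclic group $\mf{F}:=\langle\Phi\rangle\leq\Aut(\wh M_N)$ fixes $x_0$, while by construction $Q$ is an entire connected component of $\Fix(\mf{F})$. Lemma~\ref{lem:pi1_vs_fix}(2) therefore produces a subgroup $\mf{F}_\ast\leq\Aut(\pi_1(\wh M_N,x_0))$ whose fixed subgroup is $\pi_1(Q)\cong G$. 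Identifying $\pi_1(\wh M_N,x_0)$ with $A_\L$ via Lemma~\ref{lem:extended_host_collapses}, we obtain an automorphism $\varphi\in\Aut(A_\L)$ with $\Fix(\varphi)\cong G$. Its order divides $N$, and equals $N$ by inspecting the action on the abelianisation: for each $i$, the $\Z^{N-1}$ summand spanned by the classes of the loops $f_i^s\cdot(f_i^1)^{-1}$ for $s=2,\dots,N$ is acted on by a companion-type matrix whose characteristic polynomial is $(\lambda^N-1)/(\lambda-1)$, whose roots are the non-trivial $N$--th roots of unity, so the matrix has order exactly $N$.

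Untwistedness of $\varphi$ follows from the characterisation of $U(A_\L)$ proved in \cite{Fio10a}, according to which $\varphi\in U(A_\L)$ if and only if $\varphi$ coarsely preserves the median operator on $A_\L$. Let $\wt\Phi$ be the lift of $\Phi$ to the universal cover $\wt{\wh M_N}$ fixing a chosen lift of $x_0$. As a cubical automorphism of the $\CAT$ cube complex $\wt{\wh M_N}$, the lift $\wt\Phi$ exactly preserves its median operator; moreover it intertwines the deck action of $A_\L$ via $\varphi$. Hence $\varphi$ preserves, up to bounded error, the coarse median operator on $A_\L$ induced by the cocompact cubulation $A_\L\acts\wt{\wh M_N}$. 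Since any two cocompact cubulations of $A_\L$ induce coarsely equivalent coarse medians, $\varphi$ coarsely preserves the Salvetti median in particular, so $\varphi\in U(A_\L)$.

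When $N=2$ we have $\L=\wh\G_1=\wh\G$ and the construction simplifies: $\Phi$ swaps the two copies $f_i^1,f_i^2$ for each $i$ while fixing everything else. In the identification $A_{\wh\G}\cong\pi_1(\wh M_2,x_0)$ arising from collapsing the hyperplanes $\mc{H}_i^1$, each generator $\mc{H}_i$ is represented by the loop $f_i^2\cdot(f_i^1)^{-1}$, which $\Phi_\ast$ sends to its inverse $f_i^1\cdot(f_i^2)^{-1}$; every generator coming from $\G$ is instead fixed pointwise. Thus $\varphi$ is the product of the $n$ inversions at the standard generators $\mc{H}_1,\dots,\mc{H}_n$; in particular $\varphi\in U^0(A_{\wh\G})$ and $[\varphi]\in\Out(A_{\wh\G})$ is pure.

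The most delicate ingredient is the verification of untwistedness for general $N$, since the explicit formulae for $\varphi$ on generators (roughly $\mc{H}_i^s\mapsto\mc{H}_i^{s+1}\cdot(\mc{H}_i^2)^{-1}$) do not obviously decompose into folds, inversions and partial conjugations; the cleanest workaround is the geometric characterisation of \cite{Fio10a}, applied to the manifest cubical realisation of $\varphi$ on the cocompact cubulation $\wt{\wh M_N}$.
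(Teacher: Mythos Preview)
Your proof follows the same strategy as the paper's: realise $G=\pi_1(Q)$, apply Lemma~\ref{lem:pi1_vs_fix}(2) to the action $\langle\Phi\rangle\acts\wh M_N$, and then invoke \cite{Fio10a} for untwistedness. Your explicit verification that $\varphi$ has order exactly $N$ via the action on the abelianisation is a welcome addition that the paper leaves implicit.

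There is, however, a genuine error in your $N=2$ argument. You assert that under the identification $A_{\wh\G}\cong\pi_1(\wh M_2,x_0)$ the automorphism $\varphi$ is precisely the product of inversions at $\mc{H}_1,\dots,\mc{H}_n$, fixing every $\G$--generator. This is not correct. The loop $f_i^2\cdot(f_i^1)^{-1}$ is based at an endpoint of $f_i$, not at $x_0$; and to represent a $\G$--generator $v$ as a loop at $x_0$ one must first travel through $\E$ along $f_j^1$--edges, which $\Phi$ does \emph{not} fix (it swaps them with $f_j^2$--edges). The paper's own Example~\ref{ex:explicit}(1) makes this concrete: there one finds $\varphi(b_i)=Bb_i$ for the $\G$--generators $b_i$, not $\varphi(b_i)=b_i$. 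So $\varphi$ is genuinely more than a product of inversions, and the automorphism you describe is not even in the same outer class as the actual $\varphi$. The paper's argument for purity is different: it observes that $\wh M_2$ is built exactly like a Salvetti blowup, so the computation in Section~2.3 and Lemma~3.2 of \cite{CSV} shows that $\varphi$ is a product of Whitehead automorphisms (inversions together with folds), hence lies in $U^0(A_{\wh\G})$.

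A smaller point: your blanket claim that ``any two cocompact cubulations of $A_\L$ induce coarsely equivalent coarse medians'' is stronger than what you need and is not something you should assert without justification. What is actually required, and is immediate, is that the specific collapse $\wh M_N\twoheadrightarrow\mbb{S}_\L$ of Lemma~\ref{lem:extended_host_collapses} lifts to an $A_\L$--equivariant median map between universal covers, so the two coarse medians agree. The paper is equally terse on this point, but you should not replace a small gap with a dubious general principle.
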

\begin{proof}
Realise $G$ as the fundamental group of a divisible compact special cube complex $Q$. Choose a dividing pattern and apply the above construction to form the extended host $\wh M_N$ with its order--$N$ cubical automorphism $\Phi$. Since $Q\sq\wh M_N$ is a connected component of the fixed set of $\Phi$, we can apply Lemma~\ref{lem:pi1_vs_fix}(2) and obtain an order--$N$ automorphism $\varphi$ of $\pi_1(\wh M_N)\cong A_{\wh\G_{N-1}}$ with $\Fix(\varphi)\cong \pi_1(Q)$. Since $\Phi$ is a cubical automorphism of $\wh M_N$, the induced automorphism $\varphi$ is coarse-median preserving, hence untwisted by \cite{Fio10a}. 

For $N=2$, it is easily seen that $\varphi$ is a product of Whitehead automorphisms of $A_{\Lambda}$ (see Section~2.3 and Lemma~3.2 in \cite{CSV}), so it is pure. We also have $\Lambda=\wh\G_1=\wh\G$, which concludes the proof.
\end{proof}

\begin{ex}\label{ex:explicit}
The proof of Theorem~\ref{thm:divisible->fix} allows us to give a simple and explicit description of automorphisms of RAAGs with interesting fixed subgroups. We consider $N=2$ throughout.
\begin{enumerate}
\setlength\itemsep{.25em}
\item Let $S_2$ be the closed orientable surface of genus $2$. As discussed in Example~\ref{ex:strongly_divisible}, the cube complex $Q_2$ dual to the curves in Figure~\ref{fig:surface_2} is special and divisible.

We can construct a dividing pattern where $(\kappa,\G)$ is a standard special colouring and there are just two partitions $\mc{H}_1,\mc{H}_2$. Namely, $\mc{H}_1^0$ and $\mc{H}_2^0$ are the union of the blue and red curves, respectively. 

The extended crossing graph $\wh\G$ of this dividing pattern is depicted in Figure~\ref{fig:surface_2} on the right. Each vertex of $\wh\G$ has been drawn in the colour of the corresponding curves, and the vertices corresponding to $\mc{H}_1,\mc{H}_2$ are the two thicker ones $B,R$. Note that the standard crossing graph of the hyperplanes of $Q_2$, i.e.\ the subgraph $\G\sq\wh\G$, is simply a hexagon with vertices alternately coloured blue and red.

Now, Theorem~\ref{thm:divisible->fix} shows that there exists an order--$2$ automorphism $\varphi\in U^0(A_{\wh\G})$ with $\Fix(\varphi)\cong\pi_1(S_2)$. Explicitly, $\varphi$ is determined by the following assignments (see Section~2.3 and Lemma~3.2 in \cite{CSV}):
\begin{align*}
\varphi(B)&=B^{-1}, & \varphi(b_i)&=Bb_i, & \varphi(R)&=R^{-1}, & \varphi(r_i)&=Rr_i.
\end{align*}
Moreover, we have $\Fix(\varphi)=\langle b_3^{-1}b_1,\ b_3^{-1}b_2,\ r_3^{-1}r_1,\ r_3^{-1}r_2 \rangle$.

\begin{figure}[ht]
\begin{tikzpicture}[baseline=(current bounding box.center)]
\draw[thick] (0,0) ellipse (4cm and 2.5cm);
\draw[thick] (-1.75,0) ellipse (1cm and 1cm);
\draw[thick] (1.75,0) ellipse (1cm and 1cm);
\tikzset{
    partial ellipse/.style args={#1:#2:#3}{
        insert path={+ (#1:#3) arc (#1:#2:#3)}
    }
}
\draw[ultra thick, blue] (0,0) [partial ellipse=0:180:0.75cm and 0.4cm];
\draw[ultra thick, blue, dashed] (0,0) [partial ellipse=180:360:0.75cm and 0.4cm];
\draw[ultra thick, blue] (-3.375,0) [partial ellipse=0:180:0.625cm and 0.35cm];
\draw[ultra thick, blue, dashed] (-3.375,0) [partial ellipse=180:360:0.625cm and 0.35cm];
\draw[ultra thick, blue] (3.375,0) [partial ellipse=0:180:0.625cm and 0.35cm];
\draw[ultra thick, blue, dashed] (3.375,0) [partial ellipse=180:360:0.625cm and 0.35cm];
\draw[very thick, red] (0,0) ellipse (3.75cm and 2.25cm);
\draw[very thick, red] (-1.75,0) ellipse (1.25cm and 1.25cm);
\draw[very thick, red] (1.75,0) ellipse (1.25cm and 1.25cm);
\end{tikzpicture}
\hspace{2cm}
\begin{tikzpicture}[baseline=(current bounding box.center)]
\draw[fill,blue] (0,0) circle [radius=0.12cm];
\node [below left] at (0,0) {\blue{$B$}};
\draw[fill,red] (0,2) circle [radius=0.08cm];
\node [above left] at (0,2) {\red{$r_3$}};
\draw[fill,red] (2,0) circle [radius=0.08cm];
\node [above left] at (2,0) {\red{$r_1$}};
\draw[fill,blue] (2,2) circle [radius=0.08cm];
\node [above] at (2,2) {\blue{$b_2$}};
\draw[fill,blue] (1.2,2.9) circle [radius=0.08cm];
\node [above] at (1.2,2.9) {\blue{$b_1$}};
\draw[fill,red] (3.2,2.9) circle [radius=0.12cm];
\node [above right] at (3.2,2.9) {\red{$R$}};
\draw[fill,red] (1.2,0.9) circle [radius=0.08cm];
\node [above left] at (1.2,0.9) {\red{$r_2$}};
\draw[fill,blue] (3.2,0.9) circle [radius=0.08cm];
\node [above left] at (3.2,0.9) {\blue{$b_3$}};
\draw[thick] (0,0) -- (0,2);
\draw[thick] (0,2) -- (2,2);
\draw[thick] (2,2) -- (2,0);
\draw[thick] (2,0) -- (0,0);
\draw[thick] (0,2) -- (1.2,2.9);
\draw[thick] (1.2,2.9) -- (3.2,2.9);
\draw[thick] (3.2,2.9) -- (2,2);
\draw[thick] (3.2,2.9) -- (3.2,0.9);
\draw[thick] (3.2,0.9) -- (2,0);
\draw[thick,dashed] (0,0) -- (1.2,0.9);
\draw[thick,dashed] (1.2,0.9) -- (1.2,2.9);
\draw[thick,dashed] (1.2,0.9) -- (3.2,0.9);
\tikzset{
    partial ellipse/.style args={#1:#2:#3}{
        insert path={+ (#1:#3) arc (#1:#2:#3)}
    }
}
\draw[thick,rotate around={43:(1.6,1.45)}] (1.6,1.45) [partial ellipse=180:360:2.16cm and 2.1cm];
\end{tikzpicture}
\caption{On the left: curves on $S_2$ whose dual square complex $Q_2$ is divisible. On the right: the extended crossing graph $\wh\G$ of the simplest dividing pattern for $Q_2$.}
\label{fig:surface_2}
\end{figure}

\item Consider the right-angled Coxeter group $W_{\G}$, its commutator subgroup $W_{\G}'$, and the divisible cube complex $Q_{\G}$ with $\pi_1(Q_{\G})\cong W_{\G}'$ described in Example~\ref{ex:strongly_divisible}.

A simple dividing pattern for $Q_{\G}$ can be constructed as follows. First, we consider the natural special colouring $\kappa\colon\mscr{W}(Q_{\G})\ra\G$ where each preimage $\kappa^{-1}(v)$ is precisely the set of hyperplanes of $Q_{\G}$ dual to edges of $Q_{\G}$ labelled by $v$. As in Example~\ref{ex:strongly_divisible}, we also let $\mc{H}_v^0$ be the union of the hyperplanes in $\kappa^{-1}(v)$, and denote by $\mc{H}_v^-,\mc{H}_v^+$ the connected components of $Q_{\G}-\mc{H}_v^0$ containing $\underline{0}$ and $\underline{1}$, respectively.

The extended crossing graph $\wh\G$ for this dividing pattern has the following structure. The vertex set of $\wh\G$ consists of two copies of the vertex set of $\G$, one corresponding to the colours of the special colouring and one corresponding to the $\mc{H}_v$. We therefore denote these two copies of $\G^{(0)}$ by $\G_{\kappa}^{(0)}\sq\wh\G^{(0)}$ and $\G_{\mc{H}}^{(0)}\sq\wh\G^{(0)}$, respectively. For a vertex $v\in\G$, we also write $v_{\kappa},v_{\mc{H}}$ for its two copies in $\wh\G$. The full subgraph $\G_{\kappa}\sq\wh\G$ spanned by the vertices in $\G_{\kappa}^{(0)}$ is simply isomorphic to $\G$. Instead, the vertices in $\G_{\mc{H}}^{(0)}$ span an entire clique $\G_{\mc{H}}\sq\wh\G$. Finally, there are additional edges $[v_{\kappa},w_{\mc{H}}]$ whenever $v\neq w$. An example of the construction of $\wh\G$ from $\G$ is depicted in Figure~\ref{fig:RACG}.

By Theorem~\ref{thm:divisible->fix}, there exists an order--$2$ automorphism $\varphi\in U^0(A_{\wh\G})$ with $\Fix(\varphi)\cong W_{\G}'$. Explicitly, we have
\begin{align*}
\varphi(v_{\kappa})&=v_{\mc{H}}v_{\kappa}, & \varphi(v_\mc{H})&=v_\mc{H}^{-1},
\end{align*}
for each $v\in\G$.

\begin{figure}[ht]
\begin{tikzpicture}[baseline=(current bounding box.center)]
\draw[fill] (2*1,2*0) circle [radius=0.07cm];
\node [right] at (2*1,2*0) {$a$};
\draw[fill] ({2*cos(72)},{2*sin(72)}) circle [radius=0.07cm];
\node [above] at ({2*cos(72)},{2*sin(72)}) {$b$};
\draw[fill] ({2*cos(144)},{2*sin(144)}) circle [radius=0.07cm];
\node [above] at ({2*cos(144)},{2*sin(144)}) {$c$};
\draw[fill] ({2*cos(216)},{2*sin(216)}) circle [radius=0.07cm];
\node [left] at ({2*cos(216)},{2*sin(216)}) {$d$};
\draw[fill] ({2*cos(288)},{2*sin(288)}) circle [radius=0.07cm];
\node [below] at ({2*cos(288)},{2*sin(288)}) {$e$};
\draw[fill,thick] (2*1,2*0) -- ({2*cos(72)},{2*sin(72)});
\draw[fill,thick] ({2*cos(72)},{2*sin(72)}) -- ({2*cos(144)},{2*sin(144)});
\draw[fill,thick] ({2*cos(144)},{2*sin(144)}) -- ({2*cos(216)},{2*sin(216)});
\draw[fill,thick] ({2*cos(216)},{2*sin(216)}) -- ({2*cos(288)},{2*sin(288)});
\draw[fill,thick] ({2*cos(288)},{2*sin(288)}) -- (2*1,2*0);
\end{tikzpicture}
\hspace{2.5cm}
\begin{tikzpicture}[baseline=(current bounding box.center)]
\draw[fill] (2*1,2*0) circle [radius=0.07cm];
\node [right] at (2*1,2*0) {$a_{\kappa}$};
\draw[fill] ({2*cos(72)},{2*sin(72)}) circle [radius=0.07cm];
\node [above] at ({2*cos(72)},{2*sin(72)}) {$b_{\kappa}$};
\draw[fill] ({2*cos(144)},{2*sin(144)}) circle [radius=0.07cm];
\node [above] at ({2*cos(144)},{2*sin(144)}) {$c_{\kappa}$};
\draw[fill] ({2*cos(216)},{2*sin(216)}) circle [radius=0.07cm];
\node [left] at ({2*cos(216)},{2*sin(216)}) {$d_{\kappa}$};
\draw[fill] ({2*cos(288)},{2*sin(288)}) circle [radius=0.07cm];
\node [below] at ({2*cos(288)},{2*sin(288)}) {$e_{\kappa}$};
\draw[fill,thick] ({2*cos(0)},{2*sin(0)}) -- ({2*cos(72)},{2*sin(72)});
\draw[fill,thick] ({2*cos(72)},{2*sin(72)}) -- ({2*cos(144)},{2*sin(144)});
\draw[fill,thick] ({2*cos(144)},{2*sin(144)}) -- ({2*cos(216)},{2*sin(216)});
\draw[fill,thick] ({2*cos(216)},{2*sin(216)}) -- ({2*cos(288)},{2*sin(288)});
\draw[fill,thick] ({2*cos(288)},{2*sin(288)}) -- ({2*cos(0)},{2*sin(0)});
\foreach \i in {0,1,2,3,4}{ 
	\foreach \j in {0,1,2,3,4}{
	\draw[fill,thick] ({0.6*cos(180+\i*72)},{0.6*sin(180+\i*72)}) -- ({0.6*cos(180+\j*72)},{0.6*sin(180+\j*72)}); }}
\foreach \i in {0,1,2,3,4}{ 
	\foreach \j in {2,3}{
	\draw[fill] ({0.6*cos(180+\i*72)},{0.6*sin(180+\i*72)}) -- ({2*cos((\i+\j)*72)},{2*sin((\i+\j)*72)}); }}
\foreach \i in {0,1,2,3,4}{ 
	\foreach \j in {4}{
	\draw plot [smooth] coordinates {({0.6*cos(180+\i*72)},{0.6*sin(180+\i*72)})     ({   0.3*cos(180+\i*72)+cos((\i+\j)*72)  +0.1*(2*sin((\i+\j)*72)-0.6*sin(180+\i*72))    } , {   0.3*sin(180+\i*72)+sin((\i+\j)*72)  -0.1*(2*cos((\i+\j)*72)-0.6*cos(180+\i*72))   })    ({2*cos((\i+\j)*72)},{2*sin((\i+\j)*72)})}; }}
\foreach \i in {0,1,2,3,4}{ 
	\foreach \j in {1}{
	\draw plot [smooth] coordinates {({0.6*cos(180+\i*72)},{0.6*sin(180+\i*72)})     ({   0.3*cos(180+\i*72)+cos((\i+\j)*72)  -0.1*(2*sin((\i+\j)*72)-0.6*sin(180+\i*72))    } , {   0.3*sin(180+\i*72)+sin((\i+\j)*72)  +0.1*(2*cos((\i+\j)*72)-0.6*cos(180+\i*72))   })    ({2*cos((\i+\j)*72)},{2*sin((\i+\j)*72)})}; }}
\draw[fill] ({0.6*cos(180+0)},{0.6*sin(180+0)}) circle [radius=0.07cm];
\node [left] at (({0.6*cos(180+0)},{0.6*sin(180+0)}) {$a_{\mc{H}}$};
\draw[fill] ({0.6*cos(180+72)},{0.6*sin(180+72)}) circle [radius=0.07cm];
\node [below] at ({0.6*cos(180+72)},{0.6*sin(180+72)}) {$b_{\mc{H}}$};
\draw[fill] ({0.6*cos(180+144)},{0.6*sin(180+144)}) circle [radius=0.07cm];
\node [below] at ({0.6*cos(180+144)},{0.6*sin(180+144)}) {$c_{\mc{H}}$};
\draw[fill] ({0.6*cos(180+216)},{0.6*sin(180+216)}) circle [radius=0.07cm];
\node [right] at ({0.6*cos(180+216)},{0.6*sin(180+216)}) {$d_{\mc{H}}$};
\draw[fill] ({0.6*cos(180+288)},{0.6*sin(180+288)}) circle [radius=0.07cm];
\node [above] at ({0.6*cos(180+288)},{0.6*sin(180+288)}) {$e_{\mc{H}}$};
\end{tikzpicture}
\caption{On the left: a graph $\G$. On the right: the graph $\wh\G$ constructed in Ex\-ample~\ref{ex:explicit}(2) such that there exists an involution $\varphi\in U^0(A_{\wh\G})$ with $\Fix(\varphi)\cong W_{\G}'$.}
\label{fig:RACG}
\end{figure}

\item Consider the graph braid group $B_n(\G)$, where $2\leq n\leq\#\G^{(0)}-2$.

The extended crossing graph $\wh\G$ of the dividing pattern for $UC_n(\G)$ constructed in Example~\ref{ex:braid_divisible} can be described as follows. The graph $\wh\G$ has a vertex $\overline e$ for every edge $e\sq\G$ (cor\-responding to the colours of the special colouring) and a vertex $\overline a$ for every vertex $a\in\G$ (cor\-responding to the partitions in the dividing pattern), as well as the following edges:
\begin{itemize}
\item $[\overline e,\overline f]$ for any two edges $e,f\sq\G$ with $e\cap f=\emptyset$;
\item $[\overline a,\overline b]$ for any two distinct vertices $a,b\in\G$;
\item $[\overline e,\overline a]$ for any edge $e\sq\G$ and any vertex $a\in\G$ such that $a\not\in e$.
\end{itemize}

Note that the graph $\wh\G$ --- as well as the host $M$, the extended host $\wh M_2$ and the cubical automorphism $\Phi_2\in\Aut(\wh M_2)$ --- are all completely independent of the integer $n$ defining the graph braid group $B_n(\G)$. In fact, it is easy to see that $\Fix(\Phi_2)\sq\wh M_2$ has exactly $\#\G^{(0)}+1$ connected components, respectively isomorphic to the configuration spaces $UC_n(\G)$ for the various integers of $0\leq n\leq\#\G^{(0)}$. We now explain this point in more detail.

First, recall that the dividing pattern for $UC_n(\G)$ consists of a partition $\mc{H}_a$ for each vertex $a\in\G$; the vertices of $UC_n(\G)$ in the subset $\mc{H}_a^+$ are represented by cardinality--$n$ subsets of $\G^{(0)}$ containing $a$, while vertices in $\mc{H}_a^-$ are cardinality--$n$ subsets of $\G^{(0)}$ \emph{not} containing $a$. Since $2\leq n\leq\#\G^{(0)}-2$, any two distinct partitions are transverse. Thus, the $\CAT$ cube complex $\E$ is a cube of dimension $\#\G^{(0)}$, irrespective of the value of $n$. Each vertex of $\E$ corresponds to a choice, for each vertex $a\in\G$, of either the side $\mc{H}_a^-$ or the side $\mc{H}_a^+$; thus, the set $\E^{(0)}=M^{(0)}$ is naturally identified with the entire power set of $\G^{(0)}$. Thinking of vertices of $M$ as subsets of $\G^{(0)}$ (of any cardinality), each edge of $M-\E$ replaces a given vertex of $\G$ with a different one, and thus it connects subsets of $\G^{(0)}$ with the same cardinality. Hence, removing the edges of $\E$, the host $M$ gets split into $\#\G^{(0)}+1$ connected components, depending on the cardinality of the subsets of $\G^{(0)}$ represented by the vertices of $\E$; these are also the components of $\Fix(\Phi_2)$ within the extended host $\wh M_2$. Each of these components is clearly isomorphic to $UC_n(\G)$ for the respective value of $n$.

In conclusion, along with Lemma~\ref{lem:pi1_vs_fix} and Theorem~\ref{thm:divisible->fix}, all this shows that the RAAG $A_{\wh\G}$ has involutions $\varphi_n\in U^0(A_{\wh\G})$ for $0\leq n\leq\#\G^{(0)}$, each with $\Fix(\varphi_n)\cong B_n(\G)$. Moreover, these involutions all determine the same outer automorphism of $A_{\wh\G}$. Indeed, the difference between the $\varphi_n$ is only due to the choice of a basepoint for the fundamental group of $\wh M_2$, which can be picked in any of the $\#\G^{(0)}+1$ distinct connected components of the subset $\Fix(\Phi_2)\sq\wh M_2$. 

Unlike the previous two examples, we will not give explicit formulas for these involutions. This would require picking a basepoint of $\wh M_2$ in a specific component of $\Fix(\Phi_2)$, which cannot be done canonically in general. As a consequence, the involutions $\varphi_n$ do not appear to have particularly canonical or pleasant expressions.
\end{enumerate}
\end{ex}

\subsection{The converse}\label{subsec:converse}

In this subsection, we complete the proof of Theorem~\ref{thmintro:divisible_vs_fix} by obtaining the following converse to Theorem~\ref{thm:divisible->fix}. We will use in an essential way the fact that finite subgroups of $U^0(A_{\G})$ fix points of untwisted Outer Space, which was recently proved by Bregman, Charney and Vogtmann \cite{BCV2} partly building on ideas of Hensel and Kielak \cite{HK1,HK2}.

\begin{prop}\label{prop:fix->divisible}
Let $A_{\G}$ be a right-angled Artin group. Consider:
\begin{itemize}
\item either a pure untwisted automorphism $\varphi\in U^0(A_{\G})$ with finite-order projection $[\varphi]\in\Out(A_{\G})$;
\item or a finite group of pure untwisted automorphisms $F<U^0(A_{\G})\leq\Aut(A_{\G})$. 
\end{itemize}
Then $\Fix(\varphi)$ and $\Fix(F)$ are fundamental groups of divisible compact special cube complexes.
\end{prop}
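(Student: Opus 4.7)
\smallskip

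\noindent\textbf{Proof sketch.} My plan is to run the host construction of Subsection~\ref{subsec:host} essentially in reverse, starting from a Salvetti blowup supplied by the fixed-point theorem for untwisted Outer Space.

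First I would apply the main result of \cite{BCV2} to the outer projection of $F\leq U^0(A_{\G})$ (or of $\langle\varphi\rangle$), obtaining a compact Salvetti blowup $X$ in the sense of \cite{CSV}, together with an action of $F$ on $X$ by cubical automorphisms realising the prescribed outer class on $\pi_1(X)\cong A_{\G}$. Since the universal cover $\wt X$ is complete and $\CAT$, the finite group $F$ acts with nonempty fixed set, so $\Fix(F)\sq X$ is nonempty. Fix a connected component $C\sq\Fix(F)$; Lemma~\ref{lem:pi1_vs_fix}(2) then identifies $\pi_1(C)$ with $\Fix(F_*)$ for a suitable lift $F_*\leq\Aut(A_{\G})$, and part~(1) of the same lemma allows one to verify that $F_*$ agrees (up to inner conjugation) with the prescribed collection. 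The problem thereby reduces to showing that $C$ is a divisible, compact, special cube complex.

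Compactness is automatic. Specialness follows from the fact that $C$, being a connected component of the fixed set of a finite group of cubical isometries of $\wt X$, is locally convex in $X$: its cells are faces of cells of $X$, hyperplanes of $C$ are intersections of the form $H\cap C$ with $H\in\mscr{W}(X)$, and the non-self-osculation and non-inter-osculation conditions pass from $X$ to the subcomplex $C$.

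To equip $C$ with a dividing pattern, I would transport one from the ambient blowup $X$. By construction, $X$ comes with a canonical collapse $X\twoheadrightarrow\mbb{S}_{\G}$ realised by collapsing a distinguished set $\mc{B}\sq\mscr{W}(X)$ of ``blowup'' hyperplanes; the surviving hyperplanes are labelled by $\G^{(0)}$, giving a special colouring of $X$. Restricting this labelling to the hyperplanes of $C$ and then minimising produces a minimal special colouring $(\kappa,\G')$ of $C$. The set $\mc{B}$ itself decomposes into finitely many families, one per Whitehead partition used in the blowup, and each family is a union of pairwise-disjoint hyperplanes partitioning $X$ into two sides. Intersecting these partitions of $X$ with $C$ gives the candidate partitions $\mc{H}_1,\dots,\mc{H}_n$.

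The main obstacle is the verification of Definition~\ref{defn:dividing_pattern}. Conditions (1)--(3) follow from the disjointness and orientation properties of Whitehead blowup hyperplanes in $X$, together with the $F$--invariance and local convexity of $C$. The crux is condition~(4): one must show that any two distinct vertices of $C$, and any vertex of $C$ outside a generalised half-carrier, are separated by some $\mc{H}_i$. In $X$ the analogous statement is built into the blowup construction, since distinct vertex-versus-vertex and vertex-versus-half-carrier pairs are always separated by some family of blowup hyperplanes. The delicate step is checking that these separations persist after intersecting with $C$; here the purity hypothesis $F\sq U^0(A_{\G})$ is essential, as it prevents $F$ from permuting the $\G$--labels and thereby ensures that both the special colouring and the partition families descend coherently to the fixed subcomplex.
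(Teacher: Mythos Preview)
Your overall strategy---realise the outer class on a Salvetti blowup via \cite{BCV2}, take a component $C$ of the fixed set, and read a dividing pattern off the blowup structure---is exactly the paper's approach. However, two steps are genuinely incomplete as written.

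\textbf{The first bullet is not reduced to the second.} When $[\varphi]$ has finite order but $\varphi$ itself has infinite order in $\Aut(A_{\G})$, the cyclic group $\langle\varphi\rangle$ is not finite, and Lemma~\ref{lem:pi1_vs_fix}(1) only produces \emph{finite} lifts of the outer class. There is no component of $\Fix(\mf{F})$ whose associated lift is $\langle\varphi\rangle$. The paper handles this via Lemma~\ref{lem:finite-order_reduction}: one passes to a $\varphi$--invariant parabolic $P$ on which $\varphi|_P$ genuinely has finite order, with $\Fix(\varphi)\cong\Fix(\varphi|_P)\times\Z^k$, and then uses that products of divisible complexes are divisible. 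You need this reduction (or an equivalent one).

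\textbf{The cube structure on $C$ is not what you claim.} The finite group $\mf{F}$ may permute hyperplanes of the blowup (certainly the $H_{\mathbf{P}_i}$, and the paper allows for permutation of the $H_v$ as well---note it passes to the quotient graph $\Lambda=\wh\G/\mf{F}$). When this happens, the cells of $C=\Fix(\mf{F})$ are \emph{diagonals} of $\mf{F}$--invariant cubes, not faces, and a hyperplane of $C$ corresponds to an entire $\mf{F}$--orbit in $\mscr{W}(X)$, not a single hyperplane. This is the content of Lemma~\ref{lem:fix_structure}, which also handles hyperplanes inverted by $\mf{F}$ and proves specialness of $C$ by a direct inter-osculation check. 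Your assertion that ``cells of $C$ are faces of cells of $X$'' and that the special colouring is obtained by simple restriction is not correct in general; the colouring on $C$ takes values in $\Lambda_C$, the graph of $\mf{F}$--orbits of hyperplanes crossing $C$. Relatedly, your explanation of the role of purity (``prevents $F$ from permuting the $\G$--labels'') is not how the paper uses it: purity is needed for the applicability of \cite{BCV2} and for the reduction Lemma~\ref{lem:finite-order_reduction}, while the dividing-pattern verification is carried out allowing permutations and passing to orbits.
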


Before proving the proposition, we need to obtain a couple of lemmas.

\begin{lem}\label{lem:finite-order_reduction}
Let $\varphi\in U(A_{\G})\leq \Aut(A_{\G})$ be an untwisted automorphism such that the projection $[\varphi]\in\Out(A_{\G})$ has finite order. Then there exists a $\varphi$--invariant parabolic subgroup $P\leq A_{\G}$ such that $\varphi|_P\in\Aut(P)$ has finite order and $\Fix(\varphi)\cong\Fix(\varphi|_P)\x\Z^k$ for some $k\geq 0$. Moreover, the automorphism $\varphi|_P$ is untwisted and, in case $\varphi$ was pure, $\varphi|_P$ is also pure.
\end{lem}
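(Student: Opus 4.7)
The plan is to take $P$ to be the maximal parabolic subgroup contained in the centraliser $C_{A_{\G}}(g)$, where $g\in A_{\G}$ is the element realising $\varphi^N$ as an inner automorphism; the $\Z^k$ factor will then arise from a complementary free abelian piece of this centraliser. First I unpack the hypothesis. Since $[\varphi]$ has finite order $N$ in $\Out(A_{\G})$, write $\varphi^N=\iota_g$ for some $g\in A_{\G}$. Applying $\varphi$ to this identity yields $\iota_{\varphi(g)}=\iota_g$, so $z:=\varphi(g)g^{-1}\in Z(A_{\G})$ is central, and hence $C_{A_{\G}}(\varphi(g))=C_{A_{\G}}(gz)=C_{A_{\G}}(g)$. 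Consequently, $\varphi$ restricts to an automorphism of the centraliser $C:=C_{A_{\G}}(g)$. Moreover, if $\varphi(x)=x$ then $x=\varphi^N(x)=gxg^{-1}$, giving the containment $\Fix(\varphi)\leq C$.

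Next I would invoke Servatius's centraliser theorem for elements of RAAGs. After replacing $g$ by an $A_{\G}$--conjugate --- equivalently, replacing $\varphi$ by an inner conjugate, which only replaces $\Fix(\varphi)$ by a conjugate copy while preserving isomorphism type, untwistedness, and purity --- I may assume $C$ splits as a direct product $C=P\x Q$, where $P$ is a parabolic subgroup of $A_{\G}$ and $Q\cong\Z^{\ell}$ is free abelian, generated by pairwise commuting cyclic roots of the connected factors of~$g$. Since $P$ is the unique maximal parabolic subgroup of $C$ and $\varphi$ preserves both $C$ and the class of parabolic subgroups (as an untwisted automorphism), $\varphi$ preserves each of the two factors $P$ and $Q$.

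The conclusion then follows mechanically. Every element of $P\leq C=C(g)$ commutes with $g$, so $\iota_g|_P=\id$, whence $\varphi^N|_P=\id$ and $\varphi|_P$ has order dividing~$N$. From $\Fix(\varphi)\leq C=P\x Q$ and the $\varphi$--invariance of each factor we obtain
\[ \Fix(\varphi)\ =\ \Fix(\varphi|_P)\x\Fix(\varphi|_Q), \]
and since $\varphi|_Q$ is a finite-order element of $\Aut(Q)=GL_{\ell}(\Z)$, its fixed subgroup is $\Z^k$ for some $0\leq k\leq\ell$. Untwistedness and purity of $\varphi|_P$ follow from the coarse-median characterisation of untwisted automorphisms from \cite{Fio10a}, since the median structure on $A_{\G}$ restricts to the parabolic subgroup~$P$.

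The principal technical obstacle I expect is the centraliser analysis of Step~2: one must produce a canonical, $\varphi$--invariant direct product decomposition of $C(g)$ in which the parabolic factor $P$ is genuinely a parabolic subgroup of $A_{\G}$ (not merely an abstract retract) and both factors behave well under the inner conjugation used to standardise~$g$. This requires the precise form of the Servatius centraliser theorem, careful identification of $P$ via cyclically reduced normal forms for $g$, and verification that untwistedness of $\varphi$ yields $\varphi(P)=P$ on the nose rather than only up to conjugation inside~$C$.
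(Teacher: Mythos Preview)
Your overall strategy --- pass to $C(g)$, split it as $P\times Q$ via Servatius, and observe that $g$ centralises $P$ so that $\varphi^N|_P=\id$ --- is sound and in fact slightly slicker than the paper's argument, which uses the double centraliser $Z_{A_\G}Z_{A_\G}(\Fix(\varphi))$ instead of $C(g)$ and therefore has to iterate (its $P$ need not be centralised by $g$). However, your justification of the key step $\varphi(P)=P$ contains a genuine error. Neither of your two claims holds: untwisted automorphisms do \emph{not} preserve the class of parabolic subgroups (a fold $w\mapsto wv$ sends the parabolic $\langle w\rangle$ to the non-parabolic $\langle wv\rangle$), and $P$ is \emph{not} the unique maximal parabolic of $C$ in general (take $g=v$ a standard generator: then $C=A_{\St(v)}$ is itself parabolic, strictly larger than $P=A_{\lk(v)}$).

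The correct argument --- which the paper uses and which you already invoke elsewhere --- is the coarse-median one: $C(g)$ is convex-cocompact and $\varphi$--invariant, so $\varphi$ preserves the induced coarse median on $C$; the splitting $C=P\times Q$ is a product of coarse median spaces, and any coarse-median preserving automorphism of such a product must respect the factors (cf.\ \cite[Proposition~A]{Fio10a} and the references in the paper to \cite{Fio10e}). With this fix your proof goes through and avoids the paper's induction.
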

\begin{proof}
If $\varphi$ already has finite order in $\Aut(A_{\G})$, we can simply take $P=A_{\G}$ and there is nothing to prove. Suppose instead that, for some $n\geq 1$ and $g\in A_{\G}$, we have $\varphi^n(x)=gxg^{-1}$ for all $x\in A_{\G}$. We assume that this is a nontrivial automorphism, that is, that $g$ does not lie in the centre of $A_{\G}$. 

Since $\Fix(\varphi)\leq\Fix(\varphi^n)$, the centraliser $Z_{A_{\G}}(\Fix(\varphi))$ contains the element $g$. In particular, $Z_{A_{\G}}(\Fix(\varphi))$ is not contained in the centre of $A_{\G}$, from which it follows that the double centraliser $H:=Z_{A_{\G}}Z_{A_{\G}}(\Fix(\varphi))$ is a \emph{proper} subgroup of $A_{\G}$. Note that $\Fix(\varphi)\leq H$ and $\varphi(H)=H$. 

Being a centraliser, $H$ is convex-cocompact in $A_{\G}$ (i.e.\ it acts cocompactly on a convex subcomplex of the universal cover of the Salvetti complex) and it admits a splitting $H=P\x B$, where $P$ is a parabolic subgroup of $A_{\G}$ and $B$ is a convex-cocompact abelian subgroup of $A_{\G}$ (see e.g.\ Remark~3.7(5) and Corollary~3.25 in \cite{Fio10e}, or more historically, Servatius' Centralizer Theorem from \cite[Section~III]{Servatius}). 

Since $\varphi$ is untwisted, it preserves the coarse median structure on $A_{\G}$ (see \cite[Proposition~A]{Fio10a}), hence it preserves the induced coarse median structure on the invariant convex-cocompact subgroup $H=P\x B$. This implies that $\varphi(P)=P$ and $\varphi(B)=B$ and hence:
\[ \Fix(\varphi)=\Fix(\varphi|_H)=\Fix(\varphi|_P)\x\Fix(\varphi|_B).\]

Now, the group $\Fix(\varphi|_B)$ is free abelian. The automorphism $\varphi|_P$ preserves the coarse median structure on $P$, so it is still an untwisted automorphism of $P$ (this time by the other arrow in \cite[Proposition~A]{Fio10a}). The restriction $\varphi|_P$ still has finite order in $\Out(P)$; indeed, since $\varphi(H)=H$, it follows that $g$ normalises $P$, so we have $g\in P\cdot Z_{A_{\G}}(P)$ (by the structure of normalisers of parabolic subgroups of RAAGs) and hence $\varphi^n|_P$ is inner. Finally, if $\varphi$ was pure, so is $\varphi|_P$ by \cite[Lemma~3.29]{Fio10a}.

Summing up, $P<A_{\G}$ is a proper parabolic subgroup, the automorphism $\varphi|_P$ is untwisted, it has finite order projection to $\Out(P)$, and we have $\Fix(\varphi)=\Fix(\varphi|_P)\x\Z^k$ for some $k\geq 0$. Now, if $\varphi|_P$ actually has finite order in $\Aut(P)$ then the lemma is proved. Otherwise we replace $A_{\G}$ with $P$ and repeat the above procedure, which can only be necessary for a finite number of times, since each time the complexity of the RAAG strictly decreases.
\end{proof}

For the next lemma, we say that a $2$--sided hyperplane $H$ of a cube complex $X$ is \emph{inverted} by some $g\in\Aut(X)$ if, for any choice of orientation on $H$, we have $g(H)=H$ and $g(\ora{H})=\ola{H}$. Also recall that any intersection of hyperplanes of $X$ has a structure of a cube complex, as we discussed in Subsection~\ref{subsec:cube_complexes}.

\begin{lem}\label{lem:fix_structure}
Consider a non-positively curved cube complex $X$, a finite subgroup $\mf{F}\leq\Aut(X)$ and a connected component $C\sq\Fix(\mf{F})$.
\begin{enumerate}
\setlength\itemsep{.25em}
\item If a hyperplane $H\sq X$ is inverted by an element of $\mf{F}$ and $H\cap C\neq\emptyset$, then $C\sq H$. 
The intersection $\mc{I}\sq X$ of all such hyperplanes is $\mf{F}$--invariant. 
\item The component $C$ has a structure of a non-positively curved cube complex defined as follows:
	\begin{itemize}
	\item its $0$--skeleton is the intersection between $C$ and the $0$--skeleton of $\mc{I}$;
	\item each edge is the diagonal of an $\mf{F}$--invariant cube of $\mc{I}$ whose hyperplanes form a single (and entire) $\mf{F}$--orbit;
	\item higher-dimensional cubes are products of diagonals as in the previous point.
	\end{itemize}
\item If $X$ is special, then $C$ is special.
\end{enumerate}
\end{lem}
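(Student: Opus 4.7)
The plan is to handle the three parts sequentially, with the bulk of the work in parts~(2) and~(3).

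For part~(1), I would fix $x\in H\cap C$ and $g\in\mf{F}$ inverting $H$. Since $x$ is fixed by $\mf{F}$, the isometry $g$ fixes $x$ and, in a neighborhood of $x$, acts like a reflection across $H$; hence $\Fix(g)$ coincides with $H$ near $x$. Thus $H\cap C$ is open in $C$; being also closed, connectedness forces $C\sq H$. The $\mf{F}$--invariance of $\mc{I}$ is then immediate: since $\mf{F}$ pointwise fixes $C$, for any $f\in\mf{F}$ and any $H$ in the defining family, the hyperplane $f(H)$ is inverted by $fgf^{-1}$ and satisfies $f(H)\cap C=f(H\cap C)\neq\emptyset$.

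For part~(2), I would first show that $\mf{F}$ inverts no hyperplane of $\mc{I}$ that meets $C$. If $H'\sq\mc{I}$ were such a hyperplane, part~(1) applied inside $\mc{I}$ gives $C\sq H'$; writing $H'=\widetilde H\cap\mc{I}$ for the corresponding hyperplane $\widetilde H$ of $X$, one sees that $\widetilde H$ is also inverted and meets $C$, so $\widetilde H$ already appears in the defining family, forcing $\mc{I}\sq\widetilde H$ and hence $H'=\mc{I}$, a contradiction. Now, for any $x\in C$, let $c\sq\mc{I}$ be the unique cube containing $x$ in its interior. Then $c$ is $\mf{F}$--invariant and $\mf{F}$ permutes its hyperplanes $H_c$ without inversion. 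Identifying $c\cong[0,1]^{H_c}$, the fixed set $c\cap C$ is the diagonal subcube consisting of points whose coordinates are constant on each $\mf{F}$--orbit of $H_c$; its corners are precisely the $\mf{F}$--fixed vertices of $c$, and its edges are diagonals of the $\mf{F}$--invariant subcubes of $c$ spanned by individual orbits. These local pieces glue to yield the claimed global cube structure on $C$, and non-positive curvature follows from local convexity of $C$ inside the NPC space $X$.

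For part~(3), I would verify each of the four conditions of Definition~\ref{defn:special}, using the bijection between hyperplanes of $C$ and $\mf{F}$--orbits of hyperplanes of $\mc{I}$ that meet $C$: each such orbit spans a single $\mf{F}$--invariant cube of $\mc{I}$ whose diagonal is the corresponding edge of $C$. Any putative pathology of a hyperplane of $C$---non-embedded, one-sided, directly self-osculating, or inter-osculating with another---would then lift to the same pathology among hyperplanes of $X$ in the corresponding orbits, contradicting the specialness of $X$. The no-inversion property of $\mf{F}$ on $\mc{I}$ established above is crucial here, in ruling out any ``twist'' between the two sides of an edge diagonal that might otherwise conceal a failure of specialness.

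The main obstacle is part~(3): while each condition ultimately reduces to the corresponding property in $X$, the bookkeeping across orbits of hyperplanes and diagonals of cubes requires care. In particular, one must track edge germs at fixed vertices of $C$ and verify that no unintended relation between two edges of $C$ dual to the same hyperplane of $C$ (at a common vertex) arises from a non-trivial action of the stabiliser of that vertex in $\mf{F}$ on the associated $\mf{F}$--orbits.
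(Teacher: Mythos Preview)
Your outline follows essentially the same route as the paper, and the ideas are all correct. Two points deserve tightening.

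First, in part~(1) the sentence ``$\Fix(g)$ coincides with $H$ near $x$'' is not literally true---$g$ may permute other hyperplanes through $x$, making $\Fix(g)$ strictly smaller than $H$ there---but only the inclusion $\Fix(g)\subseteq H$ near $x$ is needed, and that does hold (any fixed point of $g$ in the open carrier of $H$ must lie on $H$). Since $C\subseteq\Fix(g)$, the open-and-closed argument goes through.

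Second, and more substantively, your part~(3) is too compressed at the inter-osculation step. The phrase ``bijection between hyperplanes of $C$ and $\mf{F}$--orbits'' is incorrect (it is only a surjection: a single orbit can meet $C$ in several components), and ``the pathology lifts'' hides the one nontrivial point. Concretely: suppose edges $d,d'$ of $C$ at a vertex $x$ are diagonals of cubes $c,c'\subseteq\mc{I}$ with hyperplane sets $\{H_i\}$ and $\{H'_j\}$, and the hyperplanes of $C$ dual to $d,d'$ cross somewhere. That crossing occurs inside a cube of $X$ whose hyperplane set contains \emph{all} the $H_i$ and \emph{all} the $H'_j$, so every pair $(H_i,H'_j)$ crosses in $X$. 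Now if $d,d'$ failed to span a square in $C$, some pair $(H_i,H'_j)$ would fail to span a square at $x$, giving an inter-osculation in $X$. This is exactly the paper's argument; your sketch omits the observation that at the crossing point \emph{every} pair from the two orbits crosses, which is what makes the lift work.
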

\begin{proof}
If an automorphism of $X$ inverts a hyperplane $H$, each of its fixed points is either in $H$ or outside the interior of the carrier of $H$. Part~(1) immediately follows from this observation.

Let us discuss part~(2). If $x\in\Fix(\mf{F})$ is a point and $c\sq X$ is the (only) cube whose interior contains $x$, then $c$ is $\mf{F}$--invariant. Thus, $\Fix(\mf{F})$ has a cell structure given by its intersections with $\mf{F}$--invariant cubes of $X$.

Now, consider one such cube $c$. Note that $c$ has an $\mf{F}$--invariant splitting $c=c_0\x c_1\x\dots\x c_k$, where each hyperplane of the cube $c_0$ is inverted by some element of $\mf{F}$, while the hyperplanes crossing each of the other $c_i$ form an $\mf{F}$--orbit $O_i$ (and are not inverted by any element of $\mf{F}$). It follows that $\Fix(\mf{F})\cap c=\{m\}\x d_1\x\dots\x d_k$, where $m$ is the centre of $c_0$ and each $d_i$ is a diagonal of $c_i$ (which simply means that $d_i=c_i$ if $\dim c_i=1$). This gives the cubical structure on $\Fix(\mf{F})$ described in the statement of part~(2). It is straightforward to check that this structure is non-positively curved, which completes the proof of part~(2). 

We now prove part~(3). Every hyperplane of $\Fix(\mf{F})$ is a connected component of the intersection between $\Fix(\mf{F})$ and a hyperplane of $X$. If $X$ is special, this immediately implies that hyperplanes of $\Fix(\mf{F})$ are embedded, $2$--sided and do not directly self-osculate. 

We are left to rule out inter-osculations. Consider a vertex $x$ of $\Fix(\mf{F})$ and two edges $d,d'$ stemming from $x$, which are diagonals of $\mf{F}$--invariant cubes $c,c'\sq\mc{I}$. Let $\{H_i\}$ and $\{H_j'\}$ be the hyperplanes of $X$ crossing $c$ and $c'$, respectively. If the hyperplanes of $\Fix(\mf{F})$ dual to $d$ and $d'$ cross each other, then this must occur in a cube of $X$ having all $H_i$ and all $H_j'$ as its hyperplanes. In particular, these hyperplanes pairwise cross each other and, by specialness of $X$, the cubes $c,c'$ span a cube $c\x c'\sq\mc{I}$ based at $x$. Since $c,c'$ are $\mf{F}$--invariant, so is $c\x c'$. In conclusion, $d$ and $d'$ span a square of $\Fix(\mf{F})$ at $x$. This rules out any inter-osculations in $\Fix(\mf{F})$.
\end{proof}

We are now ready to prove Proposition~\ref{prop:fix->divisible}. Inevitably, we will have to assume that the reader is familiar with \cite[Section~3]{CSV}.

\begin{proof}[Proof of Proposition~\ref{prop:fix->divisible}]
Products of divisible cube complexes are divisible. Thus, by Lemma~\ref{lem:finite-order_reduction}, it suffices to prove the proposition in its second case, i.e.\ for a finite subgroup $F<U^0(A_{\G})$.

By \cite[Theorem~8.1]{BCV2}, the projection to $\Out(A_{\G})$ of the subgroup $F$ can be realised as a finite group of cubical automorphisms of a Salvetti blowup of $A_{\G}$. In the notation and terminology of \cite{CSV}, let $\S_{\G}^{\mathbf{\Pi}}$ be this blowup, where $\mathbf{\Pi}=\{\mathbf{P}_1,\dots,\mathbf{P}_n\}$ is a compatible set of $\G$--Whitehead partitions. To avoid confusion, denote by $\mf{F}\leq\Aut(\S_{\G}^{\mathbf{\Pi}})$ the subgroup realising $F$.

By Lemma~\ref{lem:pi1_vs_fix}, we have $\Fix(F)\cong\pi_1(C)$ for a connected component $C\sq\Fix(\mf{F})$. Salvetti blowups are easily seen to be special, so by Lemma~\ref{lem:fix_structure} the subset $\Fix(\mf{F})$ has itself a structure of a compact special cube complex. We are left to show that the component $C$ is divisible.

The blowup $\S_{\G}^{\mathbf{\Pi}}$ has a hyperplane $H_v$ for each $v\in\G^{(0)}$, and a hyperplane $H_{\mathbf{P}_i}$ for each $\mathbf{P}_i\in\mathbf{\Pi}$. Each of these hyperplanes comes with natural orientations $H_v^+$ and $H_{\mathbf{P}_i}^+$. The crossing graph $\wh\G$ of the hyperplanes of $\S_{\G}^{\mathbf{\Pi}}$ thus contains a full subgraph isomorphic to $\G$ and additional vertices corresponding to $\mathbf{P}_1,\dots,\mathbf{P}_n$.

Every hyperplane of $C$ is a connected component of an intersection $H\cap C$, where $H$ is a hyperplane of $\S_{\G}^{\mathbf{\Pi}}$. Given two hyperplanes $H,K\in\mscr{W}(\S_{\G}^{\mathbf{\Pi}})$, we have $H\cap C=K\cap C\neq\emptyset$ exactly when $H$ and $K$ cross $C$ and lie in the same $\mf{F}$--orbit (recall Lemma~\ref{lem:fix_structure}), in which case they are necessarily transverse as hyperplanes of $\S_{\G}^{\mathbf{\Pi}}$. Otherwise, $H\cap C$ and $K\cap C$ are disjoint.

Let $\L$ be the quotient of $\wh\G$ by its induced $\mf{F}$--action and let $\Lambda_C\sq\Lambda$ be the subgraph corresponding to $\mf{F}$--orbits of hyperplanes that cross $C$. We have a natural special colouring $\kappa\colon\mscr{W}(C)\ra\Lambda_C$ pairing each hyperplane of $C$ with the $\mf{F}$--orbit of hyperplanes of $\S_{\G}^{\mathbf{\Pi}}$ of which it is the intersection with $C$. We define the map $\kappa\colon\mscr{O}(C)\ra\Lambda_C^{\pm}$ so that the orientations $H_v^+$ and $H_{\mathbf{P}_i}^+$ determine oriented hyperplanes of $C$ with positive colour. 

Each $\mathbf{P}_i\in\mathbf{\Pi}$ induces a partition $\S_{\G}^{\mathbf{\Pi}}=\mc{H}_i^-\sqcup\mc{H}_i^0\sqcup\mc{H}_i^+$, where $\mc{H}_i^0$ is the union of the hyperplane $H_{\mathbf{P}_i}$ and all hyperplanes $H_v$ with $v\in\lk(\mathbf{P}_i)$. We can restrict this partition to an analogous partition $\mc{H}_i$ of $C$ (provided that the restriction is nontrivial).

From the construction of blowups in \cite[Section~3.3]{CSV}, it is completely straightforward to check that the special colouring $(\kappa,\Lambda_C)$ and the partitions $\mc{H}_i$ satisfy all conditions in Definition~\ref{defn:dividing_pattern} and thus form a dividing pattern for $C$. We only discuss the second half of condition~(4) in the next paragraph, since it is the most delicate.

Suppose that a vertex $x\in C$ lies outside the generalised half-carrier $C(w^+)\sq C$ for some $w\in\Lambda_C$. Then $x$ must lie outside all carriers $C(H_u^+)\sq \S_{\G}^{\mathbf{\Pi}}$ where $u\in\wh\G$ is a colour projecting to $w\in\Lambda_C$. Indeed, for any such hyperplane $H_u$, the orbit $\mf{F}\cdot H_u$ is pairwise-transverse in $S_{\G}^{\mathbf{\Pi}}$; thus, if $x$ lied in some $C(H_u^+)$, then it would be a vertex of a cube of $\S_{\G}^{\mathbf{\Pi}}$ with hyperplane set $\mf{F}\cdot H_u$, and the diagonal of this cube would be contained in $\Fix(\mf{F})$, contradicting the fact that $x\not\in C(w^+)\sq C$. Now, since $x\not\in C(H_u^+)$ in $\S_{\G}^{\mathbf{\Pi}}$ for some (and in fact all) $u\in\wh\G$ projecting to $w$, there exists a partition $\mc{H}_i$ such that $x\in\mc{H}_i^-$ and $C(H_u^+)\sq\mc{H}_i^+$ (or vice versa), straight from the construction of $\S_{\G}^{\mathbf{\Pi}}$ in \cite{CSV}. Hence $x\in\mc{H}_i^-$ and $C(w^+)\sq\mc{H}_i^+$ for the partition restricted to $C$, as required.

In conclusion, the component $C\sq\Fix(\mf{F})$ is divisible, completing the proof of the proposition.
\end{proof}

\section{Automorphisms of finite-index subgroups of RAAGs}

This section is devoted to the proof of Proposition~\ref{propintro:virtual_aut}. The argument is based on a small variation of the Haglund--Wise construction of canonical completions \cite{HW08} and occupies Subsection~\ref{subsec:virt_RAAG}. 

Subsection~\ref{subsec:virt_RACG} then briefly describes a further adaptation realising special groups as fixed subgroups of automorphisms of finite-index subgroups of right-angled Coxeter groups. That this is possible is also an immediate consequence of the fact that RAAGs are finite-index subgroups of RACGs \cite{Davis-Janusz}, the main advantage of our second argument being that it requires smaller RACGs and smaller-index subgroups.

The main results of the section are Corollary~\ref{cor:virtual_RAAG} and Corollary~\ref{cor:virtual_RACG}.

\subsection{Multiple octahedralisations}

In this subsection, we fix notation for a construction that will be needed in the following discussion. Let $\G$ be a finite simplicial graph and let $N\geq 1$ be an integer.

\begin{defn}\label{defn:multiple_octahedralisation}
The \emph{$N$--th octahedralisation} of $\G$ is the graph $\G[N]$ defined as follows. For every vertex $v\in\G$, there are $2N$ vertices of $\G[N]$, denoted $v_1^-,\dots,v_N^-$ and $v_1^+,\dots,v_N^+$. For every edge $[v,w]\sq\G$, every $\eps,\eta\in\{\pm\}$ and every $1\leq i,j\leq N$, the graph $\G[N]$ has an edge $[v_i^{\eps},w_j^{\eta}]$.
\end{defn}

In particular, for every vertex $v\in\G$, the $2N$ vertices $v_i^{\pm}$ form an anti-clique. Note that these $2N$ vertices are completely interchangeable --- there are automorphisms of $\G[N]$ realising any permutation of the $v_i^{\pm}$ while fixing the rest of $\G[N]^{(0)}$ pointwise --- and the superscript signs do not reflect any particular properties. Our notation was chosen to simplify the coming discussion.

The graph $\G[1]$ is commonly known as the \emph{octahedralisation} of $\G$. It is the ($1$--skeleton of the) link of the vertex of the Salvetti complex for the right-angled Artin group $A_{\G}$.

It is also convenient to introduce the following notation. (Again, the reason for our weird-looking choices will become clear in the next subsection.)

\begin{defn}
The graph $\G[N/2]$ is obtained from $\G[N]$ by removing all vertices with a $-$ superscript. The $+$ superscripts are then dropped from the remaining vertices (so that the vertices previously denoted $v_i^+$ are now simply denoted $v_i$).
\end{defn}

In particular, $\G[1/2]$, $\G[2N/2]$, $\G[N/2][1]$ are naturally identified with $\G$, $\G[N]$, $\G[N]$, respectively.

\subsection{Finite-index subgroups of RAAGs}\label{subsec:virt_RAAG}

Let $Q$ be a compact special cube complex. Simply denote by $\G$ the crossing graph of the hyperplanes of $Q$ (denoted by $\G_Q$ in the rest of the paper). 

To simplify the following discussion, we will repeatedly speak of ``the link'' of vertices of $Q$ (and other cube complexes), even though we will actually be referring to the $1$--skeletons of said links.

Choose an orientation for each hyperplane of $Q$ and label edge germs accordingly. That is, if an (oriented) edge $e\sq Q$ crosses a hyperplane $H\sq Q$ in the direction prescribed by the chosen orientation of $H$, we label by $H^+$ the germ of $e$ at its initial vertex, and by $H^-$ the germ of $e$ at its terminal vertex. With these labels, the link of every vertex of $Q$ admits a fixed label-preserving embedding into the octahedralisation $\G[1]$ (we do not distinguish between $H^{\pm}$ and $H_1^{\pm}$, which would be the actual labels in $\G[1]$ according to Definition~\ref{defn:multiple_octahedralisation}).

We will now embed $Q$ as a locally convex subcomplex of a compact special cube complex $\wh Q$ such that $\wh Q^{(0)}=Q^{(0)}$ and all links of vertices of $\wh Q$ are naturally identified with the graph $\G[3]$. We will then construct a cubical automorphism of $\wh Q$ having $Q$ as its fixed set.

The construction of $\wh Q$ is a minor variation of the \emph{canonical completion} of Haglund and Wise \cite[Section~6]{HW08}. We begin by adding edges to $Q$, labelling their germs by vertices of $\G[3]$. Consider a vertex $x\in Q^{(0)}$ and a hyperplane $H\in\G^{(0)}$. We distinguish three cases, depending on the local structure of $Q$ at $x$.
\begin{enumerate}
\setlength\itemsep{.25em}
\item \emph{There are no edge germs labelled $H^-$ or $H^+$ at $x$.} In this case, we add three loops at $x$: one with germs labelled $H_1^{\pm}$, one with germs labelled $H_2^{\pm}$ and one with germs labelled $H_3^{\pm}$.
\item \emph{There are edge germs labelled $H^-$ and $H^+$ at $x$.} In this case, let $x'$ and $x''$ be the other vertices of the two edges whose germs at $x$ are labelled $H^-$ and $H^+$, respectively. We add two edges between $x$ and $x'$, one with germ at $x$ labelled $H_2^-$ and germ at $x'$ labelled $H_2^+$, and another with germ at $x$ labelled $H_3^-$ and germ at $x'$ labelled $H_3^+$. We similarly add two more edges between $x$ and $x''$, with germs at $x$ labelled $H_2^+,H_3^+$ and germs at $x''$ labelled $H_2^-,H_3^-$. (It is possible that $x'=x''=x$, in which case we just add two loops at $x$, rather than four edges.) We think of the existing labels $H^{\pm}$ as $H_1^{\pm}$.
\item \emph{There is an edge germ labelled $H^-$ at $x$, but none labelled $H^+$ (or vice versa).} In this case, we first define $x'$ as in case~(2) and similarly add two edges between $x$ and $x'$, one with germ at $x$ labelled $H_2^-$ and germ at $x'$ labelled $H_2^+$, and another with germ at $x$ labelled $H_3^-$ and germ at $x'$ labelled $H_3^+$.

Then we define a third vertex $y$ as follows. Let $\g\sq Q$ be the maximal edge path starting at $x$ such that all its edges cross the hyperplane $H$ (and such that its edges are distinct). We define $y$ as the terminal vertex of $\g$. Note that $y$ has the ``opposite problem'' compared to $x$: it has an edge germ labelled $H^+$, but none labelled $H^-$. Thus, we add three edges between $x$ and $y$: one with germ $H_i^+$ at $x$ and germ $H_i^-$ at $y$ for each value of $1\leq i\leq 3$.
\end{enumerate}
We repeat the above construction for all vertices $x\in Q^{(0)}$ and hyperplanes $H\in\G^{(0)}$. 

We orient all edges from the germ with positive superscript to the germ with negative superscript. For simplicity, we will speak of \emph{$H_i$--edges}, for some $H\in\G^{(0)}$ and $1\leq i\leq 3$, referring to any edge whose germs are labelled $H_i^{\pm}$.

The $1$--skeleton of $\wh Q$ will consist of the $1$--skeleton of $Q$ along with the edges that we have just added. Before we glue in higher-dimensional cubes, we make following observation.

\begin{rmk}\label{rmk:same_transverse_labels}
We draw the reader's attention to a couple of properties of the above construction.
\begin{enumerate}
\setlength\itemsep{.25em}
\item Suppose that two vertices $x,y\in \wh Q^{(0)}=Q^{(0)}$ are joined by an $H_i$--edge of $\wh Q$, for some $H\in\G^{(0)}$ and $1\leq i\leq 3$. If a hyperplane $K\in\G^{(0)}$ is transverse to $H$, then there is an edge germ labelled $K^+$ at $x$ (in the original $Q$) if and only if there is an edge germ labelled $K^+$ at $y$ (in the original $Q$). Similarly, $K^-$ appears at $x$ if and only if $K^-$ appears at $y$.

This is clear in cases~(1) and~(2) above. In case~(3), this is because, if $x$ lies in the carrier of the hyperplane $K$, then the path $\g$ is entirely contained in the same carrier, and so is the vertex $y$.

\item Consider a vertex $x\in\wh Q$ and some $H\in\G^{(0)}$. There are three edges in $\wh Q$ whose germs at $x$ are labelled $H_1^+,H_2^+,H_3^+$. The other germs of these three edges (the ones labelled $H_i^-$) are all based at the same vertex, which we will simply denote by $H^+\cdot x$. The exact same observation holds swapping the signs $\pm$, leading to the definition of a vertex $H^-\cdot x$. 

We always have $H^-\cdot(H^+\cdot x)=H^+\cdot(H^-\cdot x)=x$. If $H,K\in\G^{(0)}$ are transverse, we also have $K^+\cdot(H^+\cdot x)=H^+\cdot(K^+\cdot x)$ and $K^-\cdot(H^+\cdot x)=H^+\cdot(K^-\cdot x)$ (and so on). Note that it is possible for $H^{\pm}\cdot x$ to coincide with $x$.  
\end{enumerate}
\end{rmk}

Now, consider the $1$--skeleton of $\wh Q$ described above. We attach a square filling any $4$--cycle in which one pair of opposite edges are $H_i$--edges oriented in the same direction, for some $H\in\G^{(0)}$ and $1\leq i\leq 3$, and the other pair of opposite edges are $K_j$--edges oriented in the same direction, for some $K\in\G^{(0)}$ transverse to $H$ and some $1\leq j\leq 3$. Any such $4$--cycle has vertex set of the form $\{x,H^+\cdot x,K^+\cdot(H^+\cdot x)=H^+\cdot(K^+\cdot x),K^+\cdot x\}$ or $\{x,H^+\cdot x,K^-\cdot(H^+\cdot x)=H^+\cdot(K^-\cdot x),K^-\cdot x\}$.

Finally, we add the higher-dimensional cubes required to ensure that the end result $\wh Q$ is non-positively curved.

\begin{prop}\label{prop:completion_properties}
\begin{enumerate}
\item[]
\item The cube complex $\wh Q$ is compact, special and all its vertex links have a label-preserving isomorphism with $\G[3]$. The subcomplex $Q\sq\wh Q$ is locally convex.
\item There is a cubical isomorphism $\Phi\colon\wh Q\ra\wh Q$ such that $\Phi^6=\id$ and $Q=\Fix(\Phi)$.
\item The fundamental group of $\wh Q$ embeds in the RAAG $A_{\G[3/2]}$ as a subgroup of index $\#Q^{(0)}$.
\end{enumerate}
\end{prop}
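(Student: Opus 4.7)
For part (1), I plan to run through the three cases of the construction and verify that, for each vertex $x\in\wh{Q}^{(0)}$ and each colour $H\in\G^{(0)}$, the six germs $H_i^{\pm}$ with $1\leq i\leq 3$ each appear exactly once at $x$: in case (1) via the three new loops; in case (2) via one $Q$-edge plus two new edges on each of the two sides; in case (3) via one $Q$-edge, two new parallel edges, and three new $\gamma$-edges to $y$. Combined with the square-attaching rule, each vertex link becomes label-preservingly isomorphic to $\G[3]$. Compactness is immediate since $\wh{Q}^{(0)}=Q^{(0)}$ is finite and each step adds finitely many edges. Specialness follows because the labelling gives a local isometric immersion $\wh{Q}\looparrowright\mbb{S}_{\G[3/2]}$, whose vertex link is $(\G[3/2])[1]=\G[3]$. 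For local convexity of $Q\sq\wh{Q}$, note that any two $Q$-edges at a $Q$-vertex which span a square of $\wh{Q}$ are labelled by hyperplanes of $Q$ transverse in $\G$, hence already span a square of $Q$ by specialness of $Q$.

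For part (2), I will define $\Phi$ to fix every vertex of $\wh{Q}$ and to act on edges as follows: identity on each $Q$-edge; the $3$-cycle $H_1\mapsto H_2\mapsto H_3\mapsto H_1$ on the three loops at $x$ created in case (1); the transposition $H_2\leftrightarrow H_3$ on each pair of new edges added in case (2) or in the first step of case (3); and the $3$-cycle $H_1\mapsto H_2\mapsto H_3\mapsto H_1$ on the three new $\gamma$-edges between $x$ and $y$ added in case (3). The key consistency check is that the actions prescribed at the two endpoints of each new edge agree: in particular, the symmetric ``vice versa'' application of case (3) at $y$ must produce the same $3$-cycle on the three $\gamma$-edges, and the case (2) constructions at $x$ and $x'$ must produce matching transpositions. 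The resulting permutation of edges extends to a cubical automorphism because every square of $\wh{Q}$ has opposite edge-labels of the form $(H_i,K_j)$ with $H,K$ transverse in $\G$, and the image square under $\Phi$ has labels of the form $(H_{i'},K_{j'})$, still transverse in $\G$, so it is again filled in $\wh{Q}$.

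Every $\Phi$-orbit on edges has size $1$, $2$, or $3$, so $\Phi^6=\id$. Since any cubical automorphism of an $n$-cube that fixes all vertices is the identity, every cube of $\wh{Q}$ that $\Phi$ fixes setwise is pointwise fixed and has all its edges in the fixed set. As the fixed edges of $\Phi$ are precisely the $Q$-edges, such a cube has $1$-skeleton in $Q$ and, by local convexity of $Q\sq\wh{Q}$, lies in $Q$. This gives $\Fix(\Phi)=Q$. For part (3), the labelling of edges by vertices of $\G[3/2]$ defines a cubical map $\wh{Q}\to\mbb{S}_{\G[3/2]}$; since the link of every vertex of $\wh{Q}$ is $\G[3]=(\G[3/2])[1]$, the link of the unique vertex of $\mbb{S}_{\G[3/2]}$, this is a covering with $\#Q^{(0)}$ sheets, and so $\pi_1(\wh{Q})$ embeds in $A_{\G[3/2]}$ as a subgroup of index $\#Q^{(0)}$. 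The main obstacle is the consistency check for $\Phi$ in the previous paragraph.
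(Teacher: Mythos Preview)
Your approach is essentially the paper's, including the definition of $\Phi$ (the paper phrases it per triple of parallel edges: ``if $e_1\sq Q$ swap $e_2,e_3$, else cycle $e_1\mapsto e_2\mapsto e_3$'', which is exactly your case-by-case rule and sidesteps your endpoint-consistency worry).

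There is one small gap in part~(2). When you assert that ``the image square under $\Phi$ has labels of the form $(H_{i'},K_{j'})$'', you are implicitly assuming that the two \emph{opposite} $H_i$-edges of the square land on $H_{i'}$-edges with the \emph{same} subscript $i'$. This fails if the two opposite edges lie in $\Phi$-orbits of different sizes --- say one is a $Q$-edge (fixed) and the other a $\gamma$-edge (orbit of size $3$). The paper explicitly notes that this cannot happen, invoking Remark~\ref{rmk:same_transverse_labels}(1): crossing a $K_j$-edge does not change whether the germ $H^+$ is present at a vertex of the original $Q$, so opposite $H_i$-edges in any square are either both in $Q$ or both not, and hence have matching orbit structure. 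Once you add this observation, your argument goes through; parts~(1) and~(3) already match the paper.
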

\begin{proof}
Part~(1) is immediate from the above discussion (see especially Remark~\ref{rmk:same_transverse_labels}).

We now prove part~(2). The automorphism $\Phi$ fixes pointwise $\wh Q^{(0)}=Q^{(0)}$, as well as each edge of $Q\sq\wh Q$. On the rest of the $1$--skeleton of $\wh Q$, we define $\Phi$ as follows. Let $e_1,e_2,e_3$ be the three $H_i$--edges with a given pair of endpoints and orientation. If $e_1\sq Q$, so that it is fixed by $\Phi$, then $\Phi$ swaps $e_2$ and $e_3$. Otherwise, $\Phi$ cyclically permutes the edges by $e_1\mapsto e_2\mapsto e_3\mapsto e_1$.

Note that $\Phi$ preserves the orientation of every edge of $\wh Q$, and it takes each $H_i$--edge to an $H_j$--edge. Since $\wh Q$ is special, it suffices to show that $\Phi$ takes square boundaries to square boundaries in order to prove that $\Phi$ extends to an automorphism of $\wh Q$. That $\Phi$ indeed has this property quickly follows from part~(1) of Remark~\ref{rmk:same_transverse_labels} (one only needs to observe that it is not possible for opposite edges in a square to have different-cardinality orbits under $\Phi$). It is also clear that $Q=\Fix(\Phi)$. 

Finally, we prove part~(3). The link of the Salvetti complex $\mbb{S}$ for $A_{\G[3/2]}$ is naturally identified with $\G[3/2][1]=\G[3]$. Thus, there is a natural locally isometric immersion $\wh Q\looparrowright\mbb{S}$ that preserves labels at the level of links. In fact, this map is also locally surjective, so it must be a covering map. It follows that $\pi_1(\wh Q)$ is isomorphic to a subgroup of $A_{\G[3/2]}$ of index $\#\wh Q^{(0)}=\#Q^{(0)}$, as required.
\end{proof}

Along with Lemma~\ref{lem:pi1_vs_fix}(2), this proves Proposition~\ref{propintro:virtual_aut}.

\begin{cor}\label{cor:virtual_RAAG}
Let $Q$ be a compact special cube complex and let $\G$ be the crossing graph of its hyperplanes. Then there exist a subgroup $H\leq A_{\G[3/2]}$ of index exactly $\#Q^{(0)}$ and a coarse-median preserving automorphism $\varphi\in\Aut(H)$ such that $\varphi^6=\id_H$ and $\Fix(\varphi)\cong\pi_1(Q)$.
\end{cor}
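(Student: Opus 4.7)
My plan is to deduce the corollary almost directly from Proposition~\ref{prop:completion_properties} together with Lemma~\ref{lem:pi1_vs_fix}(2), treating $Q$ as connected (otherwise one applies the argument to each connected component).

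First, I would fix a basepoint $x\in Q^{(0)}=\wh Q^{(0)}$ and set $H:=\pi_1(\wh Q,x)$. By Proposition~\ref{prop:completion_properties}(3), $H$ embeds in $A_{\G[3/2]}$ as a subgroup of index exactly $\#Q^{(0)}$, which gives the desired subgroup. By Proposition~\ref{prop:completion_properties}(2), there is a cubical automorphism $\Phi\colon\wh Q\to\wh Q$ with $\Phi^6=\id$ and $\Fix(\Phi)=Q$. Since $x\in Q$ is fixed by $\Phi$, this induces an automorphism $\varphi:=\Phi_*\in\Aut(H)$ satisfying $\varphi^6=\id_H$.

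Next, I would apply Lemma~\ref{lem:pi1_vs_fix}(2) to the (complete, non-positively curved) CAT(0) cube complex $\wh Q$, equipped with its canonical CAT(0) metric, and the finite group $\mf{F}:=\langle\Phi\rangle\leq\Isom(\wh Q)$. By construction, $\Fix(\mf{F})=\Fix(\Phi)=Q$, which is a single connected component containing the basepoint $x$. Lemma~\ref{lem:pi1_vs_fix}(2) then yields $\Fix(\varphi)=\Fix(\mf{F}_*)\cong\pi_1(Q,x)=\pi_1(Q)$, as required.

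Finally, I would verify that $\varphi$ is coarse-median preserving. Since $\Phi$ is a cubical automorphism of $\wh Q$, its lift $\wt\Phi$ to the universal cover $\wt{\wh Q}$ is a cubical automorphism commuting with the $H$--action up to conjugation by $\varphi$. In particular, $\wt\Phi$ preserves the cubical (hence median) structure on $\wt{\wh Q}$, so the induced automorphism $\varphi$ preserves the coarse median operator that this structure defines on $H$. No real obstacle arises: all the substantive work has already been done in Proposition~\ref{prop:completion_properties}, and the only mildly delicate point is making sure one invokes the correct part of Lemma~\ref{lem:pi1_vs_fix}, with $Q$ playing the role of the connected component $C$.
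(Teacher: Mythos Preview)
Your proposal is correct and follows essentially the same route as the paper: the corollary is deduced directly from Proposition~\ref{prop:completion_properties} together with Lemma~\ref{lem:pi1_vs_fix}(2), with the coarse-median preservation coming from $\Phi$ being cubical. One small terminological slip: $\wh Q$ is non-positively curved rather than $\CAT$ (its universal cover is $\CAT$), but this is exactly the hypothesis required by Lemma~\ref{lem:pi1_vs_fix}, so the argument goes through unchanged.
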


\subsection{Finite-index subgroups of RACGs}\label{subsec:virt_RACG}

Since every RAAG embeds as a finite-index subgroup of a RACG \cite{Davis-Janusz}, Corollary~\ref{cor:virtual_RAAG} also implies that every special group can be realised as the fixed subgroup of an automorphism of a finite-index subgroup of a RACG. In this subsection, we adapt the argument used in the previous subsection to reprove this fact, under slightly stronger assumptions on $Q$ but also obtaining better control on the size of the RACG and on the index of its subgroup.

Let again $Q$ be a compact special cube complex. We now also assume that the hyperplanes of $Q$ have no indirect self-osculations, which can always be ensured by passing to a finite cover (see \cite[Proposition~3.10]{HW08}). 

Let $\pi\colon Q_2\ra Q$ be any double cover of $Q$. That one such cover exists follows e.g.\ from the fact that RAAGs (and therefore special groups) are residually $2$--finite. See \cite[Theorem~6.1]{Toinet}, who deduces this from \cite{Duchamp-Krob,Gruenberg}.

Let $\G$ be the crossing graph of the hyperplanes of $Q$. We label every edge $e\sq Q$ by the hyperplane $H\in\G^{(0)}$ that it crosses; we also label by $H$ the two germs of $e$ at its endpoints (disregarding orientations). Via the covering map $\pi$, we pull this labelling back to a labelling by $\G$ of the edges and edge germs of $Q_2$. In particular, the link of any vertex of $Q_2$ admits a natural label-preserving embedding into $\G$ (as in the previous subsection, we will again speak of ``links'' even though we are actually referring to their $1$--skeleton).

Similarly to the previous subsection, we will embed $Q_2$ as a locally convex subcomplex of a compact special cube complex $\wh Q_2$ such that $\wh Q_2^{(0)}=Q_2^{(0)}$ and all links of vertices of $\wh Q_2$ are naturally identified with the graph $\G[3/2]$. We will then construct a cubical automorphism of $\wh Q_2$ having $Q_2$ as its fixed set.

We begin by adding edges to $Q_2$, labelling them and their germs by vertices of $\G[3/2]$. Consider a hyperplane $H\in\G^{(0)}$, a vertex $x\in Q$ and the two vertices $x_1,x_2\in\pi^{-1}(x)\sq Q_2$. Note that there are label-preserving isomorphisms between the links of $x,x_1,x_2$. We distinguish two cases.
\begin{enumerate}
\setlength\itemsep{.25em}
\item \emph{There is no edge labelled $H$ stemming from $x$ in $Q$.} In this case, we add to $Q_2$ three edges between $x_1$ and $x_2$, labelling them $H_1,H_2,H_3$, respectively.
\item \emph{There is an edge $e\sq Q$ labelled $H$ stemming from $x$.} In this case, there are two lifts $e_1,e_2\sq Q_2$ of the edge $e$, with endpoints $x_1,y_1$ and $x_2,y_2$, respectively. We then add four edges to $Q_2$: two between $x_1$ and $y_1$ with labels $H_2$ and $H_3$; and two between $x_2$ and $y_2$, also with labels $H_2$ and $H_3$. (We identify any existing labels $H$ with $H_1$.)
\end{enumerate}

This completes the $1$--skeleton of $\wh Q_2$. Now, we attach a square to each $4$--cycle in which the two pairs of opposite edges are labelled $H_i$ and $K_j$, respectively, for any pair of transverse hyperplanes $H,K\in\G^{(0)}$ and indices $1\leq i,j\leq 3$. Then we add the higher-dimensional cubes required to ensure that the end result $\wh Q_2$ is non-positively curved.

The proof of the following result is entirely analogous to that of Proposition~\ref{prop:completion_properties} (and, in fact, a little simpler).

\begin{prop}
\begin{enumerate}
\item[]
\item The cube complex $\wh Q_2$ is compact, special and all its vertex links have a label-preserving isomorphism with $\G[3/2]$. The subcomplex $Q_2\sq\wh Q_2$ is locally convex.
\item There is a cubical isomorphism $\Phi\colon\wh Q_2\ra\wh Q_2$ such that $\Phi^6=\id$ and $Q_2=\Fix(\Phi)$.
\item The fundamental group of $\wh Q_2$ embeds into the RACG $W_{\G[3/2]}$ as a subgroup of index $\# Q_2^{(0)}$. 
\end{enumerate}
\end{prop}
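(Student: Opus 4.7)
The plan is to mirror the proof of Proposition~\ref{prop:completion_properties} very closely, the main simplification being that the RACG setting removes the need to track $\pm$-orientations on edge germs. For part~(1), compactness is immediate since $Q_2$ is compact and only finitely many cells are added. For the link identification, I would verify in the two construction cases that each vertex of $\wh Q_2$ carries exactly one edge germ with label $H_i$ for every $H\in\G^{(0)}$ and every $i\in\{1,2,3\}$: in case~(1) the three new parallel edges between $x_1$ and $x_2$ supply all three, and in case~(2) the lifted edge becomes labelled $H_1$ while the two freshly-added edges supply $H_2$ and $H_3$. Specialness of $Q$ together with the absence-of-indirect-self-osculation hypothesis ensures that the edge in case~(2) is never a loop and that $x_1,x_2,y_1,y_2$ are pairwise distinct, so no link-multiplicity issues arise. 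Adjacency in the link is regulated by the square-attachment rule, which matches the edge relation of $\G[3/2]$ by construction. Specialness of $\wh Q_2$ (embeddedness, $2$-sidedness, absence of direct self-osculation and of inter-osculation) then follows from $\G[3/2]$ being a genuine simplicial graph with no multi-edges or loops. Local convexity of $Q_2\subseteq\wh Q_2$ is immediate: the original link of each vertex of $Q_2$ is the full subgraph of $\G[3/2]$ spanned by $\{H_1 : H\in\G^{(0)}\}$, which is naturally identified with $\G$.

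For part~(2), define $\Phi$ on the $1$-skeleton by: fixing every edge of $Q_2\subseteq\wh Q_2$; cyclically permuting $H_1\mapsto H_2\mapsto H_3\mapsto H_1$ in case~(1); and swapping the $H_2$- and $H_3$-edges in case~(2) while leaving the $H_1$-edge fixed. Since the orbit sizes are $3$ and $2$, the map has order dividing $6$, and every added edge lies in a nontrivial orbit, giving $\Fix(\Phi)\cap\wh Q_2^{(1)}=Q_2^{(1)}$. To see $\Phi$ extends coherently to higher cubes, I would establish the RACG analogue of Remark~\ref{rmk:same_transverse_labels}(1): if two vertices of $\wh Q_2$ are joined by an $H_i$-edge then, within $Q_2$, they carry the same set of $\G$-labels among hyperplanes transverse to $H$. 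Thus any square through one edge of a $\Phi$-orbit admits a corresponding square through each other edge of that orbit, so $\Phi$ preserves the set of square boundaries and, by specialness of $\wh Q_2$, extends uniquely to a cubical automorphism with $\Fix(\Phi)=Q_2$.

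For part~(3), I would construct a label-preserving locally isometric immersion $\wh Q_2\looparrowright Y$, where $Y$ is a ``one-vertex'' orbi-complex whose vertex link is $\G[3/2]$ and whose orbifold fundamental group is $W_{\G[3/2]}$. Concretely, $Y$ can be taken as the quotient by the coordinate-flip $(\Z/2)^{\G[3/2]^{(0)}}$-action of the cube complex $Q_{\G[3/2]}\subseteq[0,1]^{\G[3/2]^{(0)}}$ from Example~\ref{ex:strongly_divisible}(2), which carries $\pi_1\cong W_{\G[3/2]}'$. The link-identification from part~(1) yields the required immersion, which is locally surjective on links and hence a genuine (orbifold) covering map of degree $\#\wh Q_2^{(0)}=\#Q_2^{(0)}$, giving the desired finite-index embedding. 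The main obstacle I anticipate is the orbi-complex bookkeeping for $Y$; this technicality can be handled by working purely at the level of vertex links, where the labelling yields an unambiguous local immersion regardless of the global orbifold structure.
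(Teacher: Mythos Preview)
Your proposal is correct and follows essentially the same approach as the paper, which in fact gives no detailed proof at all: it simply states that the argument is ``entirely analogous to that of Proposition~\ref{prop:completion_properties} (and, in fact, a little simpler)''. Your write-up supplies exactly those analogous details, and your observation that the RACG setting dispenses with the $\pm$-bookkeeping is precisely the simplification the paper alludes to.

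One small comment on part~(3): your orbifold formulation is correct, but the orbi-complex machinery can be bypassed. Since every vertex link of $\wh Q_2$ is label-preservingly isomorphic to $\G[3/2]$, the universal cover of $\wh Q_2$ is a $\CAT$ cube complex with all links equal to $\G[3/2]$ and with a $\G[3/2]^{(0)}$-labelling of edges; this identifies it with the Davis complex for $W_{\G[3/2]}$. Deck transformations are label-preserving, hence lie in the group of label-preserving automorphisms of the Davis complex, which is exactly $W_{\G[3/2]}$ acting simply transitively on vertices. The index is then read off from the vertex count. This is the direct RACG analogue of the Salvetti-complex covering argument in Proposition~\ref{prop:completion_properties}(3) and avoids the orbifold bookkeeping you flagged as a possible obstacle.
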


Again, this and Lemma~\ref{lem:pi1_vs_fix}(2) prove the following analogue of Corollary~\ref{cor:virtual_RAAG}.

\begin{cor}\label{cor:virtual_RACG}
Let $Q$ be a compact special cube complex with no (directly or indirectly) self-osculating hyperplanes. Let $\G$ be the crossing graph of the hyperplanes of $Q$. Then there exist an index--$2$ subgroup $G_0<\pi_1(Q)$, a subgroup $H\leq W_{\G[3/2]}$ of index $2\cdot\#Q^{(0)}$ and a coarse-median preserving automorphism $\varphi\in\Aut(H)$ such that $\varphi^6=\id$ and $\Fix(\varphi)\cong G_0$.
\end{cor}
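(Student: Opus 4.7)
The plan is to derive the corollary as a direct consequence of the preceding Proposition combined with Lemma \ref{lem:pi1_vs_fix}(2), in essentially the same way that Theorem \ref{thm:divisible->fix} was deduced from its supporting construction. First I would invoke residual $2$--finiteness of special groups (as cited via \cite{Toinet}) to select the double cover $\pi\colon Q_2\ra Q$ to be connected. This matters because I want $Q_2$ itself --- not merely a component of it --- to be a whole connected component of the fixed set of the cubical automorphism produced by the Proposition. Having fixed such a $Q_2$, the Proposition supplies a compact special cube complex $\wh Q_2$ containing $Q_2$ as a locally convex subcomplex, a cubical automorphism $\Phi\colon\wh Q_2\ra\wh Q_2$ with $\Phi^6=\id$ and $\Fix(\Phi)=Q_2$, and an embedding $\pi_1(\wh Q_2)\hookrightarrow W_{\G[3/2]}$ as a subgroup of index $\#\wh Q_2^{(0)}=\#Q_2^{(0)}=2\cdot\#Q^{(0)}$.

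Next I pick a basepoint $x\in Q_2\sq\wh Q_2$, which is automatically fixed by $\Phi$, and set $H:=\pi_1(\wh Q_2,x)$ and $G_0:=\pi_1(Q_2,x)$. Then $G_0$ is an index--$2$ subgroup of $\pi_1(Q,\pi(x))$ because $\pi$ has degree two, and $H$ sits inside $W_{\G[3/2]}$ with index $2\cdot\#Q^{(0)}$. Since $\Phi$ fixes $x$, it induces an automorphism $\varphi:=\Phi_*\in\Aut(H)$, and the relation $\Phi^6=\id$ transfers to $\varphi^6=\id_H$. Applying Lemma \ref{lem:pi1_vs_fix}(2) to the ambient space $\wh Q_2$, the finite subgroup $\mf{F}=\langle\Phi\rangle\leq\Isom(\wh Q_2)$, and the connected component $C=Q_2\sq\Fix(\mf{F})$ yields $\Fix(\varphi)=\Fix(\mf{F}_*)\cong\pi_1(Q_2,x)=G_0$, as required.

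The final item to verify is that $\varphi$ is coarse-median preserving. This is automatic: $\varphi$ is induced by a cubical isometry of the universal cover $\wt{\wh Q_2}$, on which $H$ acts geometrically by deck transformations, so $\varphi$ preserves the median structure on $H$ induced by this cubical action up to bounded error (cf.\ the direction of \cite[Proposition~A]{Fio10a} that does not require the ambient group to be a RAAG). No step here presents a real obstacle --- all the genuine geometric content has already been absorbed into the preceding Proposition, which in turn is the minor variation of the Haglund--Wise canonical completion carried out earlier in this subsection. The corollary itself is essentially bookkeeping plus one invocation of Lemma \ref{lem:pi1_vs_fix}(2), directly parallel to how Theorem \ref{thm:divisible->fix} was obtained in Section \ref{sect:meat}.
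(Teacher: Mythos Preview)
Your proposal is correct and follows essentially the same approach as the paper, which deduces the corollary in one line from the preceding Proposition together with Lemma~\ref{lem:pi1_vs_fix}(2). You have simply fleshed out the bookkeeping details (connectedness of $Q_2$, choice of basepoint, and the coarse-median preserving verification) that the paper leaves implicit.
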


\bibliography{./mybib}
\bibliographystyle{alpha}

\end{document}